\documentclass[10pt]{article}

\usepackage{amssymb,amsmath,amsthm,amsfonts}
\usepackage{graphicx}
\usepackage{comment}
\usepackage{algorithm,algorithmic}
\usepackage{eepic,epic}
\usepackage{epsfig,subfigure}
\usepackage{verbatim}
\usepackage{bookmark}
\usepackage{indentfirst}

\numberwithin{equation}{section}

\setlength\topmargin{-1cm} \setlength\textheight{220mm}
\setlength\oddsidemargin{0mm}
\setlength\evensidemargin\oddsidemargin \setlength\textwidth{160mm}
\setlength\baselineskip{18pt}

\begin{document}

\newtheorem{theorem}{Theorem}[section]
\newtheorem{lemma}{Lemma}[section]
\newtheorem{definition}{Definition}[section]
\newtheorem{example}{Example}[section]
\newtheorem{assumption}{Assumption}[section]
\newtheorem{remark}{Remark}[section]
\newtheorem{corollary}{Corollary}[section]
\numberwithin{equation}{section}

\newcommand{\al}{\alpha}
\newcommand{\fy}{\varphi}
\newcommand{\la}{\lambda}
\newcommand{\ep}{\epsilon}
\newcommand{\vtht}{{\widetilde \vartheta}}
\newcommand{\rlh}{{\widetilde \varrho}}

\def\dH#1{\dot H^{#1}(\Omega)}
\def\vecal{{\vec{\al}}}
\def\Dal{{\partial_t^\al}}
\def\dal{\bar \partial}
\def\Om{\Omega}
\def\II{(\Om)}
\def\bPtau{\bar\partial_\tau}
\def\wpsi{\widetilde \psi}
\def\hpsi{\widehat \psi}

\title{An Analysis of Galerkin Proper Orthogonal Decomposition for Subdiffusion}
\author{Bangti Jin\thanks{Department of Computer Science, University College London, Gower Street,
London WC1E 6BT, UK ({b.jin@ucl.ac.uk, bangti.jin@gmail.com})}
\and Zhi Zhou\thanks{Department of Applied Physics and Applied Mathematics,
Columbia University, New York, NY, 10027, USA
(zhizhou0125@gmail.com)}}
\date{\today}

\maketitle

\begin{abstract}
In this work, we develop a novel Galerkin-L1-POD scheme for the
subdiffusion model with a Caputo fractional derivative of order $\alpha\in (0,1)$ in time, which
is often used to describe anomalous diffusion processes in heterogeneous media. The
nonlocality of the fractional derivative requires storing all the solutions from time zero.
The proposed scheme is based on continuous piecewise linear finite elements, L1 time stepping,
and proper orthogonal decomposition (POD). By constructing an effective reduced-order scheme
using problem-adapted basis functions, it can significantly reduce the computational complexity and storage requirement.
We shall provide a complete error analysis of the scheme under realistic regularity
assumptions by means of a novel energy argument. Extensive numerical experiments are presented to verify
the convergence analysis and the efficiency of the proposed scheme.\\
{\bf Keywords}: fractional diffusion, energy argument, proper orthogonal decomposition, error estimates
\end{abstract}

\section{Introduction}\label{sec:intro}
In this work, we consider the following model initial-boundary value problem for $u(x,t)$:
\begin{alignat}{3}\label{eqn:fde}
   \Dal u-\Delta u&= f,&&\quad \text{in  } \Omega&&\quad T \ge t > 0,\notag\\
   u&=0,&&\quad\text{on}\  \partial\Omega&&\quad T \ge t > 0,\\
    u(0)&=v,&&\quad\text{in  }\Omega,&&\notag
\end{alignat}
where $\Omega$ is a bounded convex polygonal domain in $\mathbb R^d\,(d=1,2,3)$ with a boundary
$\partial\Omega$ and $v$ is a given function defined on the domain $\Om$ and $T>0$ is a fixed value.
Here  $\Dal u$ ($0<\al<1$) denotes the left-sided Caputo fractional derivative of order
$\al$ with respect to $t$ and it is defined by
(see, e.g. \cite[pp.\,91]{KilbasSrivastavaTrujillo:2006})
\begin{equation}\label{McT}
   \Dal u(t)= \frac{1}{\Gamma(1-\al)} \int_0^t(t-s)^{-\al}\frac{d}{ds}u(s)\, ds,
\end{equation}
where $\Gamma(\cdot)$ is Euler's Gamma function defined by $\Gamma(x)=\int_0^\infty s^{x-1}e^{-s}ds$ for $x>0$.

In recent years, the model \eqref{eqn:fde} has received much interest in physical modeling, mathematical analysis
and numerical simulation. The main engine that has fueled these developments is its extraordinary
capability for describing anomalously slow diffusion processes, in which the mean square
variance of particle displacements grows sublinearly with time, instead of
linear growth for a Gaussian process. At a microscopic level, the particle motion
is more adequately described by continuous time random walk, whose macroscopic counterpart is
a differential equation with a fractional derivative in time \cite{MetzlerKlafter:2000}. Nowadays the model has been
successfully employed in many applications, e.g., thermal diffusion in fractal domains
\cite{Nigmatulin:1986}, ion transport in column experiments \cite{HatanoHatano:1998}, and
non-Fickian transport in geological formation \cite{BerkowitzCortisDentzScher:2006}, to name just a few.

Numerically, the presence of the fractional derivative $\Dal u$ has two important consequences. First, the
nonlocality in time incurs huge storage requirement as well as much increased computational efforts along the
evolution of the time. Second, the solution operator has only very limited smoothing property: the problem
has at best order two smoothing in space \cite{SakamotoYamamoto:2011}, and the first derivative in time is
usually unbounded, cf. Theorem \ref{thm:reg-init} in the appendix.  These represent the main technical challenges in the
development and analysis of robust numerical schemes for reliably simulating subdiffusion. The challenges
are especially severe for ``multi-query'' applications, e.g., inverse problems and optimal control, where
repeated solutions of ``analogous'' forward problems are required, e.g., due to variation in problem parameters
or inputs. To reduce the storage requirement, a number of useful strategies have been proposed, e.g.,
short-memory principle and panel clustering \cite{Podlubny:1999,FordSimpson:2001,LopezLubich:2008,McLean:2012}.

In this work, we shall develop an efficient strategy, called the Galerkin-L1-POD scheme, for reliably simulating the
subdiffusion model \eqref{eqn:fde} by coupling the Galerkin finite element method (FEM) with proper orthogonal decomposition (POD)
to reduce the computational complexity of repeatedly simulating subdiffusion, which is
important for solving related inverse problems and optimal control.
POD is a popular model reduction technique, and it has achieved great success in reducing the
complexity of mathematical models governed by differential equations; see
\cite{KunischVolkein:2001,ChapelleGariahSainte:2012,Singler:2014,KunischVolkwein:2002siam,
AmsallemHetmaniuk:2014,SachsSchu:2013} for a rather incomplete list.
It is especially attractive in
optimal control \cite{KunischVolkwein:1999,HinzeVolkwein:2008,KunischVolkwein:2008,SachsVolkwein:2010} and parameter
inversion \cite{Jin:2008,MoireauChapelle:2011}. To the
best of our knowledge, this work represents the first application of the POD for the subdiffusion
model \eqref{eqn:fde} with a complete error analysis.

Next we describe the proposed scheme. Let $\mathcal{T}_h$ be a shape regular quasi-uniform partition of the
domain $\Omega$, and $X_h$ be the associated continuous piecewise linear finite element space. Meanwhile,
we discretize the Caputo fractional derivative $\Dal u(t)$ by the L1 approximation
$\bPtau^\al u(t_n)$ (with a time step size $\tau$) \cite{LinXu:2007,SunWu:2006}
\begin{equation*}
   \bPtau^\al u(t_n)=\sum_{j=0}^{n-1}b_j\frac{u(t_{n-j})-u(t_{n-j-1})}{\tau^\alpha\Gamma(2-\al)},
\end{equation*}
where the weights $\{b_j\}$ are defined by \eqref{eqn:b}.
With the Galerkin FEM in space and L1 approximation in time, we arrive at
the following fully discrete scheme: find $U_h^n\in X_h$ for $n=1,2,\ldots,N$
\begin{equation*}
   (\bPtau^\alpha U_h^n,\fy) + (\nabla U_h^n,\nabla \fy) = (f(t_n),\fy)\quad \forall \fy\in X_h,
\end{equation*}
with $U_h^0\in X_h$ being an approximation to the initial data $v$, where $(\cdot,\cdot)$ denotes
the $L^2(\Omega)$ inner product. The term $\bPtau^\alpha U_h^n$ involves all solutions $\{U_h^i\}_{i=0}^{n-1}$
preceding the current time step $n$, indicating the computational challenge. In this work,
we shall adopt the POD methodology to overcome the challenge. Specifically, we take the fully discrete solutions $\{U_h^n\}_{
n=0}^N$ and fractional difference quotients $\{\bPtau^\alpha U_h^n\}_{n=1}^N$ as snapshots to generate
an optimal orthonormal basis $\{\psi_j\}_{j=1}^r$. Since these snapshots are sampled from the solution
manifold, the POD basis is automatically adapted to the characteristics of the manifold and is expected
to have good approximation property. Then we employ a Galerkin framework using the POD space $X_h^m$, $m\leq r$,
spanned by the first $m$ POD basis functions, i.e., find $U_m^n\in X_h^m$, $n=1,2\ldots,N$ such that
\begin{equation*}
   (\bPtau^\alpha U_m^n,\fy) + (\nabla U_m^n,\nabla \fy) = (f(t_n),\fy)\quad \forall \fy\in X_h^m,
\end{equation*}
with $U_m^0\in X_h^m$ being an approximation to $U_h^0$. In the reduced order formulation, the degree of freedom is $m$, the
number of POD basis functions, which is usually much smaller than that of the full Galerkin formulation.
Hence, it yields an enormous reduction in computational complexity and storage requirement.
We shall provide a complete a priori convergence analysis
of the scheme. Our main theoretical result is given in Theorem \ref{thm:err-pod}. For example
for the POD approximation $\{U_m^n\}_{n=1}^N$ generated using the $H_0^1\II$-POD basis, the following
error estimate holds (with $\ell_h=|\log h|$)
\begin{equation*}
\frac{1}{N} \sum_{n=1}^N \| u(t_n)-U_m^n \|_{L^2\II}^2
\le c_T\big(\tau^{2\al} + h^4\ell_h^4  + \sum_{j=m+1}^r  \widetilde \la_j \big),
\end{equation*}
where $\{\widetilde\la_j\}_{j=1}^r$ are the descendingly ordered eigenvalues of the correlation matrix $\widetilde K$
(see Section \ref{ssec:POD} for details) under suitable verifiable regularity conditions on
the source term $f$ and the initial data $v$.

This error estimate consists of three components: spatial error $O(h^2\ell_h^2)$, temporal
error $O(\tau^{\alpha})$ and POD error $(\sum_{j=m+1}^r \widetilde \la_j)^{1/2}$. While
nearly optimal error estimates due to the  spatially semidiscrete Galerkin FEM is available
\cite{JinLazarovZhou:2013,JinLazarovPasciakZhou:2013a,JinLazarovLiuZhou:2015}, it is not
the case for temporal discretization by the L1 time stepping.
The L1 scheme was first analyzed in \cite{LinXu:2007,SunWu:2006}, where the local truncation error
was shown to be $O(\tau^{2-\al})$ for twice continuously differentiable (in time) solutions,
which is fairly restrictive, cf. Remark \ref{rmk:reg}. Recently  some error bounds that are
expressed directly in terms of data regularity for the homogeneous problem were shown using
a generating function approach \cite{JinLazarovZhou:L1}, however, the analysis does not
extend straightforwardly to the inhomogeneous case.

In this work we shall develop a novel energy argument for the L1 time stepping to overcome the technical challenge
in the convergence analysis, which represents the main technical novelty. We shall derive optimal error estimates under realistic regularity
conditions, and the analysis covers both smooth and nonsmooth problem data, cf. Theorem \ref{thm:error-fully}.
Further, the stability result plays an essential role in deriving error estimates due to the POD approximation.
All the theoretical results are fully confirmed by extensive numerical experiments.

The rest of the paper is organized as follows. In Section \ref{sec:scheme} we develop an efficient
 Galerkin-L1-POD scheme, and in Section \ref{sec:conv}, provide a complete error analysis of the scheme.
In Section \ref{sec:numeric}, extensive numerical experiments
for one- and two-dimensional examples are presented to verify the convergence analysis.
Finally, in an appendix, we briefly discuss the temporal regularity results for problem \eqref{eqn:fde}.
Throughout, the notation $c$, with or without a subscript, denotes
a generic constant, which may differ at different occurrences, but it is always independent of the
solution $u$, the mesh size $h$, time step size $\tau$, and the number $m$ of POD basis functions.


\section{An efficient Galerkin-L1-POD scheme}\label{sec:scheme}

In this section, we develop an efficient numerical scheme, termed as the Galerkin-L1-POD scheme, for problem
\eqref{eqn:fde}. It is based on the following three components: standard Galerkin method with continuous
piecewise linear finite elements in space, L1 approximation in time and proper orthogonal decomposition in
the snapshot space, which we shall describe separately in the following three subsections.

\subsection{Space discretization by the Galerkin FEM}
First we describe the spatial discretization based on the Galerkin FEM.
Let $\mathcal{T}_h$ be a shape regular and quasi-uniform triangulation of the
domain $\Omega $ into $d$-simplexes, known as finite elements and denoted by $T$. Then
over the triangulation $\mathcal{T}_h$ we define a continuous piecewise linear finite
element space $X_h$ by
\begin{equation*}
  X_h= \left\{v_h\in H_0^1(\Omega):\ v_h|_T \mbox{ is a linear function},\ \forall T \in \mathcal{T}_h\right\}.
\end{equation*}
On the space $X_h$, we define the $L^2(\Omega)$-orthogonal projection $P_h:L^2(\Omega)\to X_h$ by
$
    (P_h \fy,\chi)  =(\fy,\chi) $ for all $\chi\in X_h.
$
Then the semidiscrete Galerkin scheme
for problem \eqref{eqn:fde} reads: find $u_h(t)\in X_h$ such that
\begin{equation}\label{eqn:fem}
  (\Dal u_h,\chi) + (\nabla u_h,\nabla \chi) = (f,\chi)\quad\forall \chi\in X_h, ~t>0,
\end{equation}
with $u_h(0)=v_h\in X_h$.
Upon introducing the discrete Laplacian $\Delta_h: X_h\to X_h$ defined by
$-(\Delta_h\fy,\chi)=(\nabla\fy,\nabla\chi)$ for all $\fy,\,\chi\in X_h$,
the semidiscrete scheme \eqref{eqn:fem} can be rewritten into
\begin{equation}\label{eqn:fdesemidis}
  \Dal u_h(t) +A_h u_h(t) = f_h(t)\quad t>0,
\end{equation}
with $u_h(0)=v_h\in X_h$, $f_h=P_hf$ and $A_h=-\Delta_h$.

\subsection{Time discretization by L1 scheme}
For the time discretization, we divide the interval $[0,T]$ into $N$ equally spaced subintervals with
a time step size $\tau=T/N$, and $t_n=n\tau$, $n=0,\ldots,N$. Then the L1 scheme \cite{LinXu:2007,SunWu:2006}
approximates the Caputo fractional derivative $\Dal u(x,t_n)$ by
 \begin{equation}\label{eqn:L1approx}
   \begin{aligned}
     \Dal u(x,t_n) &= \frac{1}{\Gamma(1-\al)}\sum^{n-1}_{j=0}\int^{t_{j+1}}_{t_j}
        \frac{\partial u(x,s)}{\partial s} (t_n-s)^{-\al}\, ds \\
     &\approx \frac{1}{\Gamma(1-\al)}\sum^{n-1}_{j=0} \frac{u(x,t_{j+1})-u(x,t_j)}{\tau}\int_{t_j}^{t_{j+1}}(t_n-s)^{-\al}ds\\
     &=\sum_{j=0}^{n-1}b_j\frac{u(x,t_{n-j})-u(x,t_{n-j-1})}{\tau^\alpha\Gamma(2-\al)}
    =:\bPtau^\al u(t_n),
   \end{aligned}
 \end{equation}
where the weights $\{b_j\}$ are given by
\begin{equation}\label{eqn:b}
  b_j=(j+1)^{1-\alpha}-j^{1-\alpha},\quad j=0,1,\ldots,n-1.
\end{equation}

Then the fully discrete scheme reads: given $U_h^0=v_h\in X_h$ and $F_h^n=P_hf(t_n)\in X_h$, with
$c_\al=\Gamma(2-\al)$, find $U_h^n\in X_h$ for $n=1,2,\ldots,N$ such that
\begin{equation}\label{eqn:fully}
    (b_0I+c_\al\tau^\al A_h)U_h^n=  b_{n-1} U_h^0+ \sum_{j=1}^{n-1}(b_{j-1}-b_j)U_h^{n-j}+c_\al\tau^\al F_h^n.
\end{equation}

The computational challenge of the fully discrete scheme \eqref{eqn:fully} is obvious: To compute the numerical
solution $U_h^n$ at $t_n$, the solutions
$\{U_h^k\}_{k=0}^{n-1}$ at all preceding time instances are required, as a result of the nonlocality of
the Caputo fractional derivative $\partial_t^\alpha u$. Hence, the computational complexity
and storage requirement grow linearly as the number $n$ of time steps increases, which poses a significant challenge
especially for high-dimensional problems and multi-query applications. This naturally motivates the development of cheap reduced order models by
the POD methodology so as to reduce the effective degree of freedom.

\subsection{Galerkin-L1-POD scheme}\label{ssec:POD}
Now we develop an efficient Galerkin approximation scheme based on proper orthogonal decomposition (POD)
to circumvent the challenge. We shall first describe the general framework of the POD methodology,
and then discuss its application to the subdiffusion equation.

POD is a powerful model reduction technique for complex models, especially time/parameter dependent partial
differential equations. It resides on the empirical observation that despite the large apparent dimensionality
of the solution space (e.g., the degree of freedom of the finite element approximation),
the solution actually lives on an effectively much lower dimensional (possibly highly nonlinear) manifold. POD
constructs a problem adapted basis for efficiently approximating the manifold using samples
from the manifold, often known as ``snapshots'', which can be either solutions at different time instances,
different parameter values, or samples generated using relevant physical experiments. The POD basis functions are then employed within either
a Galerkin or Petrov-Galerkin framework to generate a reduced-order model.

Now we recall the general framework of POD. Let $X$ be a real Hilbert space endowed with
an inner product $(\cdot,\cdot)_X$ and norm $\| \cdot \|_X$. Now for $N\in \mathbb{N}$, let $\{y_n\}_{n=1}^N\subset X$
be an ensemble of snapshots and at least one of them is assumed to be nonzero. Then we set
$ \mathfrak{U}= \text{span}\{ y_1, y_2, ... , y_N\} \subset X.$
Let $\text{dim}(\mathfrak{U})=r$  and let $\{\psi_j\}_{j=1}^r$ be an orthonormal basis of the snapshot space $\mathfrak{U}$.
Then any element $y_n$ can be written as
\begin{equation*}
  y_n=\sum_{j=1}^r (y_n, \psi_j)_X \psi_j, \quad~~ n=1,2,...,N.
\end{equation*}
POD chooses an orthonormal basis $\{ \psi_j \}_{j=1}^m$ for $1\le m \le r$
to minimize the following ensemble average:
\begin{equation}\label{eqn:pod}
    \min_{\{ \psi_j \}_{j=1}^m} \frac1N \sum_{n=1}^N \| y_n- \sum_{j=1}^m (y_n, \psi_j)_X \psi_j \|_X^2.
\end{equation}
A solution of problem \eqref{eqn:pod} is called a POD-basis of rank $m$. This optimization problem is
related to the correlation matrix $K\in \mathbb{R}^{N\times N}$ corresponding to the
snapshots $\{ y_n \}_{n=1}^N$, which  is defined by
\begin{equation}\label{eqn:matK}
    K_{ij}= N^{-1}(y_j,y_i)_X,\quad i,j=1,\ldots, N.
\end{equation}
By its very construction, the matrix $K$ is symmetric positive semidefinite, and its eigenvectors
can be chosen to be orthonormal (in the inner product $(\cdot,\cdot)_X$). Further, the number of
positive eigenvalues is equal to $r$, the dimensionality of the space $\mathfrak{U}$ spanned by the snapshots
(or equivalently the rank of $K$). The following lemma gives the formula of the
POD-basis and the corresponding approximation error within the ensemble \cite{Sirovich:1987}.

\begin{lemma}\label{key}
Let $\lambda_1 \ge \lambda_2 \ge ...\ge \lambda_r >0$ be the positive eigenvalues of the correlation matrix $K$ and $v_1,...,v_r \in \mathbb{R}^N$ be
the corresponding orthonormal eigenvectors. Then a POD basis of rank $m\le r$ is given by
 \begin{equation*}
  \psi_j = \frac{1}{\sqrt{\lambda_j}} \sum_{n=1}^N (v_j)_n y_n,
 \end{equation*}
where $ (v_j)_n$ denotes the $n$-th component of the eigenvector $v_j$. Moreover, the error is given by
 \begin{equation*}
  \frac1N \sum_{n=1}^N \| y_n- \sum_{j=1}^m (y_n, \psi_j)_X \psi_j\|_X^2 = \sum_{j=m+1}^r  \lambda_j.
 \end{equation*}
\end{lemma}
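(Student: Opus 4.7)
The plan is to recast the minimization \eqref{eqn:pod} as an eigenvalue problem for the self-adjoint correlation operator on $X$ induced by the snapshots, and then exploit the one-to-one correspondence (the ``method of snapshots'') between its eigenpairs and those of the matrix $K$. First I would note that for any orthonormal set $\{\psi_j\}_{j=1}^m \subset X$, Pythagoras' identity gives
\[
\Big\|y_n - \sum_{j=1}^m (y_n,\psi_j)_X\psi_j\Big\|_X^2 \;=\; \|y_n\|_X^2 - \sum_{j=1}^m |(y_n,\psi_j)_X|^2,
\]
so the minimization \eqref{eqn:pod} is equivalent to \emph{maximizing}
\[
\mathcal{J}(\psi_1,\ldots,\psi_m) \;=\; \sum_{j=1}^m (\mathcal{R}\psi_j,\psi_j)_X, \qquad \mathcal{R}\psi := \frac{1}{N}\sum_{n=1}^N (y_n,\psi)_X\, y_n,
\]
subject to $(\psi_i,\psi_j)_X=\delta_{ij}$. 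The operator $\mathcal{R}:X\to X$ is self-adjoint, positive semidefinite and of rank $r$.

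Next I would invoke the Ky Fan maximum principle for such operators: if $\mu_1\ge\cdots\ge\mu_r>0$ are the nonzero eigenvalues of $\mathcal{R}$, then
\[
\max_{\{\psi_j\}\,\text{o.n.}} \sum_{j=1}^m (\mathcal{R}\psi_j,\psi_j)_X \;=\; \sum_{j=1}^m \mu_j,
\]
with maximizers given by the top $m$ eigenfunctions. This is a standard consequence of the spectral theorem via a Lagrange-multiplier argument (or induction using Rayleigh quotients on successive orthogonal complements).

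The third step is to link $\mathcal{R}$ to the matrix $K$. Introducing $\mathcal{Y}:\mathbb{R}^N\to X$ by $\mathcal{Y} c = N^{-1/2}\sum_{n=1}^N c_n y_n$, a direct computation shows that its adjoint satisfies $(\mathcal{Y}^* \psi)_n = N^{-1/2}(y_n,\psi)_X$, and hence $\mathcal{R}=\mathcal{Y}\mathcal{Y}^*$ while $K=\mathcal{Y}^*\mathcal{Y}$. The two operators therefore share the same nonzero spectrum $\{\lambda_j\}_{j=1}^r$, and if $Kv_j=\lambda_j v_j$ with $\|v_j\|_{\mathbb{R}^N}=1$, then $\mathcal{Y} v_j$ is an eigenvector of $\mathcal{R}$ for $\lambda_j$ of norm $\sqrt{\lambda_j}$. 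After normalization this reproduces the stated formula for $\psi_j$ (up to the factor embedded in the definition \eqref{eqn:matK} of $K$) and yields an orthonormal family in $X$.

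Finally, the error identity follows by summing the Pythagorean identity over $n$: using $(\mathcal{R}\psi_j,\psi_j)_X=\lambda_j$ together with $\operatorname{tr}(\mathcal{R})=N^{-1}\sum_{n=1}^N\|y_n\|_X^2=\sum_{j=1}^r\lambda_j$, one obtains
\[
\frac{1}{N}\sum_{n=1}^N\Big\|y_n-\sum_{j=1}^m(y_n,\psi_j)_X\psi_j\Big\|_X^2 \;=\; \operatorname{tr}(\mathcal{R}) - \sum_{j=1}^m\lambda_j \;=\; \sum_{j=m+1}^r\lambda_j.
\]
The only mildly delicate point is the Ky Fan maximum principle; the remaining work is essentially linear algebra bookkeeping connecting the snapshot Gramian $K$ to the finite-rank correlation operator $\mathcal{R}$, so I expect no genuine obstacle beyond keeping the normalization constants consistent.
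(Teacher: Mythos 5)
The paper does not actually prove Lemma \ref{key}: it is quoted as a known result with a citation to Sirovich (1987) (and it is the standard statement from Kunisch--Volkwein), so there is no in-paper argument to compare against. Your proof is a correct, self-contained derivation by the standard ``method of snapshots'': the reduction via Pythagoras to maximizing $\sum_{j=1}^m(\mathcal{R}\psi_j,\psi_j)_X$, the Ky Fan principle for the compact (finite-rank) self-adjoint operator $\mathcal{R}$, the identification $\mathcal{R}=\mathcal{Y}\mathcal{Y}^*$, $K=\mathcal{Y}^*\mathcal{Y}$ with shared nonzero spectrum, and the trace identity $\operatorname{tr}(\mathcal{R})=\operatorname{tr}(K)=\sum_{j=1}^r\lambda_j$ are all sound, and together they give exactly the two assertions of the lemma. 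The one point worth making explicit is the normalization you already flag: with $K_{ij}=N^{-1}(y_j,y_i)_X$ as in \eqref{eqn:matK} and $\|v_j\|_{\mathbb{R}^N}=1$, your construction yields $\psi_j=\lambda_j^{-1/2}N^{-1/2}\sum_{n}(v_j)_ny_n$, i.e.\ the displayed formula in the lemma is missing a factor $N^{-1/2}$ (equivalently, it is consistent only if $K$ is defined without the $N^{-1}$ prefactor); this is a normalization slip inherited from the literature rather than a flaw in your argument, and it does not affect the error identity, which is stated for the orthonormalized basis. You should also record the one-line check $(\psi_i,\psi_j)_X=(\lambda_i\lambda_j)^{-1/2}\langle Kv_i,v_j\rangle=\delta_{ij}$ to confirm orthonormality, but otherwise the proof is complete.
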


Following the abstract framework, for the subdiffusion model \eqref{eqn:fde}, we choose $2N+1$ snapshots as
\begin{equation*}
 y_n=U_h^{n-1}, \quad n=1,2,...,N+1,
\end{equation*}
and the fractional difference quotients (FDQs)
\begin{equation*}
   y_n= \bPtau^\al U_h^{n-N-1},\quad  n=N+2,...,2N+1.
\end{equation*}
The inclusion of FDQs $\{\bPtau^\al U_h^n\}$ into the snapshots $\mathfrak{U}$ is to improve the
error estimate below: it allows directly bounding the error due to the POD approximation to the
fractional derivative term $\bPtau^\al U_h^n$, cf. Lemma \ref{key}.
In the absence of these FDQs in the snapshots, the error estimate due to POD approximation would involve an additional factor
$\tau^{-2\alpha}$; see Remark \ref{rmk:DQ} for details. The use of difference quotients was first proposed by Kunisch
and Volkwein \cite{KunischVolkein:2001} for the standard parabolic equation, and we refer
interested readers to the recent work \cite{IliescuWang:2014} for extensive discussions. In this
work, we shall follow the work \cite{KunischVolkein:2001}, and
employ the FDQs $\bPtau^\al U_h^n$ in the construction of the POD basis.

In practice, there are several possible choices of the Hilbert space $X$, and we shall consider two
popular ones in this work. Our first choice for the POD space is $X=H_0^1(\Omega)$
with the inner product $(u,v)_X=(\nabla u,\nabla v)$ for all $u,~~v\in H_0^1(\Omega)$.
Then the correlation matrix $\widetilde K$ is given by
\begin{equation}\label{eqn:Ktilde}
    \widetilde K_{i,j}=(2N+1)^{-1}(\nabla y_j,\nabla y_i).
\end{equation}
We denote the corresponding POD basis (called $H_0^1(\Omega)$ POD basis) by $\{\wpsi_j\}_{j=1}^r$ and the
subspace spanned by the first $m$ $H_0^1(\Omega)$-POD basis functions by $X_h^m$, $m\leq r$. Then Lemma \ref{key}
yields the following error estimate for the POD space $X_h^m$
 \begin{equation}\label{eqn:eh}
  \frac1{2N+1}
  \bigg(\sum_{n=0}^N \| U_h^n- \sum_{j=1}^m (\nabla U_h^n,\nabla \wpsi_j) \wpsi_j \|_{H_0^1\II}^2
  +  \sum_{n=1}^N \| \bPtau^\al  U^n_h- \sum_{j=1}^m (\nabla \bPtau^\al U_h^n, \nabla \wpsi_j) \wpsi_j \|_{H_0^1\II}^2\bigg)
  = \sum_{j=m+1}^r \widetilde \la_j,
 \end{equation}
where $\{\widetilde\la_j\}_{j=1}^r$ are the descendingly ordered eigenvalues of the correlation matrix $\widetilde K$.
The second choice is $X=L^2(\Omega)$ with the standard inner product. The correlation matrix
$\widehat K$ is given by
\begin{equation}\label {eqn:Khat}
  \widehat K _{ij} =(2 N+1)^{-1} ( y_j, y_i).
\end{equation}
Likewise, we denote the corresponding POD basis (called $L^2\II$-POD basis) by $\{\widehat \psi\}_{j=1}^r$,
and by slightly abusing the notation, the subspace spanned by the first $m$ $L^2\II$ POD basis functions by $X_h^m$.
Then in view of Lemma \ref{key}, the POD space $X_h^m$ satisfies the following error estimate
\begin{equation}\label{eqn:el}
  \frac1{2N+1}
  \bigg(\sum_{n=0}^N \| U_h^n - \sum_{j=1}^m (U_h^n, \hpsi_j) \hpsi_j\|_{L^2\II}^2
  +\sum_{n=1}^N \|\bPtau^\al U_h^n - \sum_{j=1}^m (\bPtau^\al U_h^n, \hpsi_j) \hpsi_j\|_{L^2\II}^2\bigg)
  = \sum_{j=m+1}^r  \widehat \la_j,
\end{equation}
where $\{\widehat\la_j\}_{j=1}^r$ are the descendingly order eigenvalues of the correlation matrix $\widehat K$.

Next we define the Ritz projection operator $R_h^m: X_h \to X_h^m$ by
\begin{equation}\label{eqn:Ritz-pod}
 (\nabla R_h^m \chi, \nabla \fy) = (\nabla \chi, \nabla \fy)\quad \forall\fy\in X_h^m,
\end{equation}
where $\chi\in X_h\subset H_0^1(\Omega)$. The $H^1\II$-stability of the projection operator $R_h^m$
on the space $X_h$ is immediate
\begin{equation*}
  \|  \nabla R_h^m \chi  \|_{L^2\II}\le \| \nabla \chi  \|_{L^2\II} \quad \forall \chi\in X_h.
\end{equation*}

Given the POD basis, one can exploit it for model reduction in several different ways. One natural choice is to
use a Galerkin approach, which yields the following reduced-order formulation: with $U_m^0=R_h^m v_h\in X_h^m$,
 find $U_m^n \in X_h^m$, $n=1,2,...,N$ such that
\begin{equation}\label{eqn:fully-reduced}
  (\bar\partial_\tau^\alpha U_m^n,\fy_m) +(\nabla U_m^n,\nabla\fy_m) = (f(t_n),\fy_m) \quad \forall \fy_m\in X_h^m,
\end{equation}
or equivalently with $c_\al=\Gamma(2-\al)$,
\begin{equation*}
    b_0(U_m^n, \fy_m) + c_\al\tau^\al (\nabla U_m^n, \nabla\fy_m)
    =  b_{n-1} (U_m^0, \fy_m)+ \sum_{j=1}^{n-1}(b_{j-1}-b_j)(U_m^{n-j}, \fy_m)+c_\al\tau^\al (f(t_n),\fy_m) \quad\forall \fy_m\in X_h^m.
\end{equation*}
The existence and uniqueness of the POD approximation $\{U_m^n\}_{n=1}^N$ follows directly by an energy argument (see Section
\ref{sec:conv} below). In the Galerkin
framework, the stiffness matrix of the reduced-order formulation \eqref{eqn:fully-reduced} is the projection of
that of the global one \eqref{eqn:fully} into the POD space $X_h^m$. It is worth mentioning that the degree of
freedom of the reduced system \eqref{eqn:fully-reduced} is $m$, i.e., the number of POD basis functions in $X_h^m$,
which is usually much smaller than that of \eqref{eqn:fully}, i.e., the number of finite element basis functions. This
shows clearly the enormous gain in the computational complexity and storage requirement of the proposed scheme.

\section{Error analysis} \label{sec:conv}

In this part, we provide a complete error analysis of the proposed scheme \eqref{eqn:fully-reduced}.
The discretization error consists of three sources: the spatial discretization,
temporal discretization and POD approximation. It is known that the semidiscrete solution
$u_h$ satisfies the following nearly optimal error estimate \cite{JinLazarovZhou:2013,JinLazarovPasciakZhou:2013a},
where the operator $A$ is the negative Laplacian operator $-\Delta$ with a zero Dirichlet boundary condition.
The log factor $\ell_h^2$ in the error estimate is due to the limited smoothing property of
the solution operator for subdiffusion, and the prefactor $t^{-\alpha(1-\sigma)}$, for
$t\to0$, reflects the corresponding solution singularity.
\begin{theorem}\label{thm:error-semi}
Let $u$ be the solution of problem \eqref{eqn:fde} with $A^\sigma v\in L^2\II$, $0<\sigma\leq 1$, and
$f\in L^\infty(0,T;L^2\II)$, and $u_h$ be the solution of problem \eqref{eqn:fdesemidis} with $v_h=P_hv$
and $f_h=P_hf$. Then there holds with $\ell_h=|\log h| $
\begin{equation*}
  \|u(t)-u_h(t)\|_{L^2(\Omega)}\leq ch^2\ell_h^2 \left(t^{-\alpha(1-\sigma)}\|A^\sigma v\|_{L^2\II}+\|f\|_{L^\infty(0,T;L^2(\Omega))}\right).
\end{equation*}
\end{theorem}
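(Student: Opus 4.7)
The plan is to represent both $u(t)$ and $u_h(t)$ through contour integrals of the resolvents of $A$ and $A_h$, following the approach of \cite{JinLazarovZhou:2013, JinLazarovPasciakZhou:2013a}. Taking Laplace transforms in \eqref{eqn:fde} and \eqref{eqn:fdesemidis}, one gets $\widehat u(z)=z^{\al-1}(z^\al+A)^{-1}v+(z^\al+A)^{-1}\widehat f(z)$ and the analogous identity for $\widehat u_h(z)$ with $A_h$ and $P_h$ in place of $A$ and the identity. Inverting along a Hankel contour $\Gamma\subset\{z\in\mathbb{C}:|\arg z|<\theta\}$ for some $\theta\in(\pi/2,\pi)$ bounded away from the origin, and subtracting, one obtains the representation
\[
u(t)-u_h(t)=\frac{1}{2\pi i}\int_\Gamma e^{zt}z^{\al-1}F(z)v\,dz+\frac{1}{2\pi i}\int_0^t\!\int_\Gamma e^{z(t-s)}F(z)f(s)\,dz\,ds,
\]
where $F(z):=(z^\al+A)^{-1}-(z^\al+A_h)^{-1}P_h$ is the resolvent error operator.

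The crux of the argument is the uniform-in-$z$ elliptic estimate
\[
\|F(z)g\|_{L^2\II}\le c\,h^2\ell_h^2\,|z|^{-\al}\|g\|_{L^2\II},\qquad z\in\Gamma.
\]
To establish it, I would fix $z\in\Gamma$ and observe that $w:=(z^\al+A)^{-1}g$ and $w_h:=(z^\al+A_h)^{-1}P_hg$ are the exact and Galerkin solutions of the shifted elliptic problem $z^\al w+Aw=g$ with zero Dirichlet data. Galerkin orthogonality for the sesquilinear form $z^\al(\cdot,\cdot)+(\nabla\cdot,\nabla\cdot)$, combined with a Nitsche-type duality argument and the uniform $H^2$-regularity $\|w\|_{H^2\II}\le c\|g\|_{L^2\II}$ on the convex polygon, yields the $h^2$ rate, while the factor $\ell_h^2$ arises from a discrete maximum-norm estimate needed to close the argument in the nonsmooth-data regime uniformly in $z$. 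With this bound in hand, one combines it with the resolvent bound $\|(z^\al+A)^{-1}\|_{L^2\to L^2}\le c|z|^{-\al}$, takes the minimum of the two bounds pointwise in the integrand, and parametrizes $\Gamma$ so that its circular part has radius $|z|\sim 1/t$; the exponential decay along the rays together with the crossover between the two regimes yields the singular prefactor $t^{-\al(1-\sigma)}\|A^\sigma v\|_{L^2\II}$, where the full range $0<\sigma\le 1$ is covered by writing $v=A^{-\sigma}(A^\sigma v)$ and interpolating between the endpoints $\sigma=0$ and $\sigma=1$. The source contribution is handled similarly, using $\|F(z)\|_{L^2\to L^2}\le ch^2\ell_h^2|z|^{-\al}$ together with $\|f\|_{L^\infty(0,T;L^2\II)}$ and Fubini.

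The main obstacle is precisely the uniform-in-$z$ resolvent error estimate with the logarithmic factor $\ell_h^2$. On a convex polygonal domain the $H^2$-regularity of the shifted elliptic problem is available, but the constants must be tracked carefully against $z^\al$ uniformly over the contour $\Gamma$, the argument must simultaneously cover the smooth ($\sigma=1$) and nonsmooth ($\sigma=0$) endpoints, and the log factor is unavoidable in the nonsmooth regime---it reflects the borderline smoothing of the subdiffusion semigroup and originates from a discrete Sobolev/maximum-norm embedding intrinsic to continuous piecewise linear finite elements on polygonal meshes. Interpolating between the two endpoints while preserving the sharp $h^2\ell_h^2$ rate is the most delicate step and is exactly what the cited analyses carry out in detail.
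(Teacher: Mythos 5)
First, note that the paper does not actually prove Theorem \ref{thm:error-semi}: it is imported verbatim from \cite{JinLazarovZhou:2013,JinLazarovPasciakZhou:2013a}, whose arguments run through eigenfunction expansions of the solution operators $E(t)$, $\bar E(t)$ and a Ritz-projection splitting, rather than through the Laplace-transform/resolvent representation you propose. Your overall route is nevertheless a legitimate one (it is the route taken in later refinements of these estimates), so the issue is not the choice of strategy but the two technical claims on which your sketch rests.

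The central claim, $\|F(z)g\|_{L^2\II}\le ch^2\ell_h^2|z|^{-\al}\|g\|_{L^2\II}$ uniformly on $\Gamma$, is false, and your argument would prove too much if it were true. Inserting it into your contour representation of the initial-data term gives
\[
\Big\|\frac{1}{2\pi i}\int_\Gamma e^{zt}z^{\al-1}F(z)v\,dz\Big\|_{L^2\II}\le ch^2\ell_h^2\,\|v\|_{L^2\II}\int_\Gamma e^{\mathrm{Re}(z)t}|z|^{-1}\,|dz|\le ch^2\ell_h^2\,\|v\|_{L^2\II},
\]
i.e.\ a $t$-uniform $O(h^2\ell_h^2)$ bound for data merely in $L^2\II$. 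This contradicts $\lim_{t\to0^+}\|u(t)-u_h(t)\|_{L^2\II}=\|v-P_hv\|_{L^2\II}$, which is not $O(h^2)$ unless $v$ itself has $H^2$ regularity; it is also inconsistent with the sharp $t^{-\al}$ blow-up in the nonsmooth-data estimate. The correct uniform bound on a convex polygon is $\|F(z)g\|_{L^2\II}\le ch^2\|g\|_{L^2\II}$, with no factor $|z|^{-\al}$ and no logarithm; the singular prefactor $t^{-\al(1-\sigma)}$ must then be generated by the factor $z^{\al-1}$ in the contour integral together with the substitution $v=A^{-\sigma}A^\sigma v$, not by the resolvent error operator. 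Relatedly, attributing $\ell_h^2$ to a ``discrete maximum-norm estimate'' or a ``discrete Sobolev embedding'' misidentifies its origin: as the paper itself states, the logarithm reflects the limited smoothing of the subdiffusion solution operator. Concretely, in the inhomogeneous term the bound $\|F(z)\|\le ch^2$ produces the borderline kernel $\int_\Gamma|e^{z(t-s)}|\,|dz|\sim(t-s)^{-1}$, and the logarithm appears when one interpolates this against the decay $\|F(z)\|\le c|z|^{-\al}$ and cuts the resulting integral at $|z|\sim h^{-2/\al}$. No pointwise finite element theory enters anywhere. Until the resolvent error estimate is stated and proved in its correct form and the crossover argument is carried out, the proposal does not constitute a proof.
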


Below we derive the errors due to the temporal approximation and the POD approximation that are expressed
in terms of the data regularity directly. The main novel ingredient in the convergence analysis is to
establish a suitable stability result for the L1 time stepping under realistic assumptions on the
data regularity. To this end, we shall develop a novel energy argument, based on the monotonicity of
a suitable quadrature rule.

\subsection{Error analysis of the L1 scheme}
Now we develop a novel energy argument for analyzing the L1 approximation.
We begin with a weighted inequality for the weights $\{b_j\}$, which is crucial for
establishing the monotonicity of the quadrature below.
\begin{lemma}\label{lem:mid-term}
Let $\{b_j\}$ be defined by \eqref{eqn:b}. Then for $j=2,\ldots,n-1$, there holds
\begin{equation*}
   (j-1)n^{\alpha-2}b_{j-1} + (n-j)n^{\alpha-2}b_j\leq (n+1)^{\alpha-1}b_j.
\end{equation*}
\end{lemma}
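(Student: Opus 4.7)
The plan is to multiply through by $n^{2-\alpha}$ and work with the equivalent form
\begin{equation*}
(j-1)b_{j-1} + (n-j)b_j \;\le\; n^{2-\alpha}(n+1)^{\alpha-1}\,b_j .
\end{equation*}
Since $n^{2-\alpha}(n+1)^{\alpha-1}= n\bigl(n/(n+1)\bigr)^{1-\alpha}$ is slightly smaller than $n$, while $b_{j-1}>b_j$ (from concavity of $x\mapsto x^{1-\alpha}$), no single coarse comparison can succeed. I would split the task into two sub-claims: \emph{(i)}~$n^{2-\alpha}(n+1)^{\alpha-1} \ge n-1+\alpha$, and \emph{(ii)}~$(j-1)b_{j-1} \le (j-1+\alpha)b_j$ for all $j\ge 2$. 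Adding $(n-j)b_j$ to both sides of \emph{(ii)} gives $(j-1)b_{j-1}+(n-j)b_j\le (n-1+\alpha)b_j$, and combining with \emph{(i)} yields the lemma.

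Sub-claim \emph{(i)} is elementary. It is equivalent to $(1+1/n)^{1-\alpha}\le n/(n-1+\alpha)$. The tangent-line bound coming from concavity, $(1+x)^{1-\alpha}\le 1+(1-\alpha)x$, yields $(1+1/n)^{1-\alpha}\le (n+1-\alpha)/n$, and the algebraic identity $(n+1-\alpha)(n-1+\alpha)=n^2-(1-\alpha)^2 \le n^2$ closes the gap.

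The main obstacle is sub-claim \emph{(ii)}, which is genuinely delicate because it is asymptotically sharp: a direct computation shows $b_{j-1}/b_j - 1 \sim \alpha/(j-1)$ as $j\to\infty$, so any estimate obtained by applying the mean value theorem separately to $b_{j-1}$ and $b_j$ is too lossy. My approach is to recognize the identity
\begin{equation*}
(j-1+\alpha)\,b_j - (j-1)\,b_{j-1} \;=\; \Delta^2 g(j) - (2-\alpha)\,b_j ,
\end{equation*}
where $g(x)=x^{2-\alpha}$ and $\Delta^2 g(j)=g(j+1)-2g(j)+g(j-1)$; this is checked by expanding both sides. Writing each term via its natural integral representation,
\begin{equation*}
\Delta^2 g(j) = (2-\alpha)(1-\alpha)\!\int_0^1\!\!\int_0^1 (j-1+s+t)^{-\alpha}\,ds\,dt , \qquad b_j = (1-\alpha)\!\int_0^1 (j+t)^{-\alpha}\,dt ,
\end{equation*}
and then changing variable $u=s+t$ (whose density on $[0,2]$ is the triangular hat peaked at $1$) to align the two integrals on a common domain, I expect the difference to collapse to
\begin{equation*}
(j-1+\alpha)\,b_j - (j-1)\,b_{j-1} \;=\; (2-\alpha)(1-\alpha)\!\int_0^1 u\,\bigl[(j-1+u)^{-\alpha}-(j+u)^{-\alpha}\bigr]du ,
\end{equation*}
which is manifestly non-negative since $y\mapsto y^{-\alpha}$ is decreasing on $(0,\infty)$. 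This would complete the proof.
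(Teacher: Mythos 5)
Your proof is correct, and it takes a genuinely different route from the one in the paper. The paper keeps $j$ and $n$ coupled throughout: it rewrites the claimed inequality as the nonpositivity of a single integral $\int_0^1 g(t)\,(j-1+t)^{-\alpha}(j+t)^{-\alpha}\,dt$ with $g(t)=(j-1)(j+t)^{\alpha}-(j-1+t)^{\alpha}\bigl(n(1+n^{-1})^{\alpha-1}-n+j\bigr)$, and then shows $g'<0$ on $[0,1]$ and $g(0)\le 0$, the latter requiring a separate monotonicity analysis of $j\mapsto j(1-(1-j^{-1})^{1-\alpha})$. You instead decouple the two parameters through the intermediate bound $(n-1+\alpha)b_j$: your sub-claim \emph{(i)} isolates all the $n$-dependence in an elementary tangent-line estimate, and sub-claim \emph{(ii)} isolates the $j$-dependence in the sharp inequality $(j-1)b_{j-1}\le(j-1+\alpha)b_j$, which you establish via the identity $(j-1+\alpha)b_j-(j-1)b_{j-1}=\Delta^2 g(j)-(2-\alpha)b_j$ with $g(x)=x^{2-\alpha}$ and the triangular-density representation of $\Delta^2 g(j)$; I checked the identity, the change of variables, and the sign of the resulting integrand, and all are correct (and both of your sub-claims are asymptotically tight, so the decomposition loses nothing). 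What your approach buys is modularity and transparency: each sub-claim is a clean one-parameter statement, and the positivity at the end is manifest rather than the outcome of a monotonicity chase; what the paper's approach buys is brevity of setup, since it never needs the second-difference identity and goes straight from the definition of $b_j$ to a single integrand whose sign is analyzed.
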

\begin{proof}
Using the definition of the weights $b_j$, the assertion is equivalent to: for all $j=2,\ldots,n-1$:
\begin{equation*}
  \int_0^1 (j-1+t)^{-\alpha}(j-1) - \left(n\left(1+n^{-1}\right)^{\alpha-1}-n+j\right)(j+t)^{-\alpha}dt \leq 0,
\end{equation*}
that is,
\begin{equation*}
  \int_0^1\frac{g(t)}{(j-1+t)^\alpha (j+t)^\alpha}dt \leq 0,
\end{equation*}
where the function $g:[0,1]\to\mathbb{R}$ is defined by
$
  g(t) = (j-1) (j+t)^{\alpha} - (j-1+t)^\alpha(n(1+n^{-1})^{\alpha-1}-n+j),
$
with its $g'(t)$ given by
\begin{equation*}
  g'(t) =\alpha\left[ \frac{j-1}{(j+t)^{1-\alpha}} - \frac{n(1+n^{-1})^{\alpha-1}-n+j}{(j-1+t)^{1-\alpha}}\right].
\end{equation*}
For $\alpha\in(0,1)$, there holds $n(1+n^{-1})^{\alpha-1} -n + j \geq n^2(n+1)^{-1}-n+j = j - n(n+1)^{-1}>j-1$. Hence
we deduce $g'(t)<0$ on the interval $[0,1]$. It suffices to show that $g(0)\leq 0$. Obviously,
\begin{equation*}
    g(0)= (j-1)^\alpha(\underbrace{(j-1)^{1-\alpha}j^\alpha - j + n\left(1-(1+n^{-1})^{\alpha-1}\right)}_{\mathrm{I}}).
\end{equation*}
The term $\mathrm{I}$ in the bracket can be rewritten as
\begin{equation*}
  \mathrm{I} = j\left((1-j^{-1})^{1-\alpha}-1\right) + n\left(1-(1+n^{-1})^{\alpha-1}\right).
\end{equation*}
We claim that the function $\tilde g(j)= j \left(1-(1-j^{-1})^{1-\alpha}\right)$ is monotonically decreasing in $j$. To see this,
let $h(t): (0,1)\to \mathbb{R}$, with $h(t) = t^{-1} (1-(1-t)^{1-\alpha})$. Then
$
    h'(t)  
    =-t^{-2} \left(1-(1-t)^{-\alpha} (1-\al t)\right).
$
Next consider the function $\tilde h(t): (0,1)\to\mathbb{R}$, with $\tilde{h}(t)=(1-t)^{\alpha}$. Then $\tilde{h}'(t)=-\al(1-t)^{\alpha-1}$ and $\tilde{h}''(t)=(\alpha-1)\alpha(1-t)^{\alpha-2}<0$, namely,
the function $\tilde{h}$ is concave. Then the concavity implies $\tilde{h}(t)\leq \tilde{h}(0) + \tilde{h}'(0)t$,
which gives $(1-t)^{\alpha}\leq 1 - \alpha t$. Consequently,
$h'(t) \ge - t^{-2} (1-(1-\al t)^{-1} (1-\al t) ) \geq 0,$ and hence $h$ is
monotonically increasing, and the monotonicity of the function $\tilde{g}(j)$ follows.
Hence, by the trivial inequality $(n-1)/n<n/(n+1)$, we have
\begin{equation*}
   \begin{aligned}
     \mathrm{I} & < n((1-n^{-1})^{1-\alpha}-1) + n(1-(1+n^{-1})^{\alpha-1})\\
     & = n \big((1-n^{-1})^{1-\alpha}-(1-(n+1)^{-1})^{1-\alpha}\big)<0,
   \end{aligned}
\end{equation*}
which concludes the proof of the lemma.
\end{proof}

Now we give an important monotonicity relation of a weighted rectangular quadrature approximation.
\begin{theorem}\label{lem:convex-quad}
Let the function $f: [0,1]\to\mathbb{R}$ be convex and nonnegative with $f(0)=0$, and $\alpha\in(0,1)$. For
any $n\in\mathbb{N}$, let $x_j=\frac{j}{n}$, $j=0,\ldots,n$, and $y_j=\frac{j}{n+1}$, $j=0,\ldots,n+1$. Then
there holds
\begin{equation*}
  n^{\alpha-1}\sum_{j=0}^{n-1}b_jf(x_j) \leq (n+1)^{\alpha-1}\sum_{j=0}^nb_jf(y_j).
\end{equation*}
\end{theorem}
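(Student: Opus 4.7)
The plan is to use the convexity of $f$ to bound each $f(x_j)$ by a linear combination of $f(y_j)$ and $f(y_{j+1})$, thereby reducing the inequality to a coefficient-wise comparison that is handled by Lemma \ref{lem:mid-term} in the interior and by two direct estimates at the boundaries.

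Observe first that
$$ x_j = \tfrac{n-j}{n}\, y_j + \tfrac{j}{n}\, y_{j+1}, \qquad j=0,1,\dots,n-1, $$
so convexity of $f$ yields $f(x_j) \le \tfrac{n-j}{n} f(y_j) + \tfrac{j}{n} f(y_{j+1})$. Multiplying by $n^{\alpha-1}b_j$, summing, reindexing $k = j+1$ in the $f(y_{j+1})$ contributions, and using $f(y_0) = f(0) = 0$, I obtain
\begin{align*}
n^{\alpha-1}\sum_{j=0}^{n-1} b_j f(x_j)
\le\ & n^{\alpha-2}(n-1)b_1\, f(y_1)\\
& + n^{\alpha-2}\sum_{j=2}^{n-1}\bigl[(n-j)b_j + (j-1)b_{j-1}\bigr] f(y_j)
+ n^{\alpha-2}(n-1)b_{n-1}\, f(y_n).
\end{align*}
Since each $f(y_j) \ge 0$, it then suffices to bound the coefficient of each $f(y_j)$ above by $(n+1)^{\alpha-1} b_j$.

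For the interior indices $j = 2, \dots, n-1$, this is exactly the content of Lemma \ref{lem:mid-term}. For the left boundary $j = 1$, I need $(n-1) n^{\alpha-2} \le (n+1)^{\alpha-1}$, equivalently $(1-1/n)(1+1/n)^{1-\alpha} \le 1$, which follows at once from Bernoulli's inequality $(1+1/n)^{1-\alpha} \le 1+(1-\alpha)/n$.

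The main obstacle is the right boundary $j = n$, where the required estimate $(n-1) n^{\alpha-2} b_{n-1} \le (n+1)^{\alpha-1} b_n$ is rather tight (both sides agree to leading order in $n$). I plan to handle it by elementary calculus. Using the telescoping identities $n^{\alpha-1} b_{n-1} = 1 - (1-1/n)^{1-\alpha}$ and $(n+1)^{\alpha-1} b_n = 1 - (1-1/(n+1))^{1-\alpha}$ and setting $u = 1/n \in (0,1]$, the inequality is equivalent to
$$ h(u) := (1+u)^{\alpha-1} - (1-u)^{2-\alpha} - u \le 0. $$
One checks $h(0) = 0$ and $h'(0) = (\alpha-1) + (2-\alpha) - 1 = 0$, while
$$ h''(u) = (1-\alpha)(2-\alpha)\bigl[(1+u)^{\alpha-3} - (1-u)^{-\alpha}\bigr] < 0 \text{ on } (0,1), $$
since $(1+u)^{\alpha-3} < 1 < (1-u)^{-\alpha}$ and $(1-\alpha)(2-\alpha) > 0$. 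Thus $h' < 0$ and hence $h \le 0$ on $(0,1]$, which completes the proof.
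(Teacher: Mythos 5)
Your proof is correct and follows essentially the same route as the paper's: the same convexity decomposition $x_j=\tfrac{n-j}{n}y_j+\tfrac{j}{n}y_{j+1}$, the same coefficient-wise comparison with Lemma \ref{lem:mid-term} handling the interior indices, and the same second-derivative argument for the last term (your $h$ is exactly the negative of the paper's auxiliary function $g$). The only cosmetic difference is your use of Bernoulli's inequality for the $j=1$ coefficient, where the paper uses a direct factorization; both are one-line verifications.
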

\begin{proof}
First we observe the trivial inequalities $\frac{j}{n+1}<\frac{j}{n}<\frac{j+1}{n+1}$, i.e.,
$y_j<x_j<y_{j+1}$, for $j=1,\ldots,n-1$. There also holds the trivial identity
\begin{equation*}
  x_j:= \frac{j}{n} = \frac{n-j}{n}\frac{j}{n+1} + \frac{j}{n}\frac{j+1}{n+1} =:\frac{n-j}{n}y_j + \frac{j}{n}y_{j+1}.
\end{equation*}
Now by the convexity of the function $f$, we deduce
\begin{equation*}
  f(x_j) = f\left(\frac{n-j}{n} y_j + \frac{j}{n}y_{j+1}\right)\leq \frac{n-j}{n}f(y_j) + \frac{j}{n}f(y_{j+1}),\quad j=1,\ldots,n.
\end{equation*}
With the assumption $f(0)=0$, it suffices to consider $j\geq1$ in the sum. Hence
\begin{equation*}
  \begin{aligned}
  n^{\alpha-1}\sum_{j=1}^{n-1}b_jf(x_j) & \leq n^{\alpha-1} \sum_{j=1}^{n-1}b_j\left(\frac{n-j}{n}f(y_j) + \frac{j}{n}f(y_{j+1})\right)\\
   &= n^{\alpha-1}\left(b_1\frac{n-1}{n}f(y_1) +
   \sum_{j=2}^{n-1}\left(b_{j-1}\frac{j-1}{n}+\frac{n-j}{n}b_{j}\right)f(y_{j}) +b_{n-1}\frac{n-1}{n}f(y_n)\right).
  \end{aligned}
\end{equation*}
To show the desired assertion, we consider the following three cases separately, first, last and middle terms.
For the first term, in view  of the nonnegativity of the function $f$, it suffices to show
$n^{\alpha-1}\frac{n-1}{n}b_1 \leq (n+1)^{\alpha-1} b_1$, which however follows from $\alpha\in(0,1)$ and
\begin{equation*}
  (n+1)^{1-\alpha} n^{\alpha-1}\frac{n-1}{n} = \left(\frac{n+1}{n}\right)^{1-\alpha} \frac{n-1}{n}
   = \left(\frac{n^2-1}{n^2}\right)^{1-\alpha}\left(\frac{n-1}{n}\right)^\alpha< 1.
\end{equation*}
For the last term, we have
\begin{equation*}
  n^{\alpha-1} \frac{n-1}{n}b_{n-1} = n^{\alpha-1}(n^{1-\alpha}-(n-1)^{1-\alpha})\frac{n-1}{n} = 1 - \frac{1}{n} - \left(\frac{n-1}{n}\right)^{2-\alpha},
\end{equation*}
and meanwhile
\begin{equation*}
  (n+1)^{\alpha-1}b_n = (n+1)^{\alpha-1}((n+1)^{1-\alpha}-n^{1-\alpha})= 1 -\left(\frac{n}{n+1}\right)^{1-\alpha}.
\end{equation*}
Hence, it suffices to show $n^{-1}+(1-n^{-1})^{2-\alpha}-(1-(n+1)^{-1})^{1-\alpha}>0$ for $n>1$.
Let $g:[0,1]\to\mathbb{R}$ by $g(t)=t+(1-t)^{2-\alpha}-(1+t)^{\alpha-1}$. Then $g(0)=0$, and
$ g'(t)  = 1 - (2-\alpha)(1-t)^{1-\alpha} - (\alpha-1)(1+t)^{\alpha-2}$.
Clearly $g'(0)=0$ and further
$g''(t) = (2-\alpha)(1-\alpha)((1-t)^{-\alpha}-(1+t)^{\alpha-3})>0,$
which in particular implies $g'(t)\geq0 $ on the interval $[0,1]$.
To conclude the proof, it suffices to show the inequality for the middle terms, i.e., for $j=2,\ldots,n-1$
\begin{equation*}
  n^{\alpha-1}\frac{j-1}{n}b_{j-1} + n^{\alpha-1}\frac{n-j}{n}b_j\leq (n+1)^{\alpha-1}b_j,
\end{equation*}
which however is already shown in Lemma \ref{lem:mid-term}.
\end{proof}

The following result is a direct corollary from Theorem \ref{lem:convex-quad}, and
it will play a crucial role in establishing the stability result in Theorem
\ref{thm:stab-uni} below.
\begin{lemma}\label{lem:key1}
For any $\alpha\in(0,1)$, let $b_j$ be defined in \eqref{eqn:b}. Then for any $n\in \mathbb{N}$, there holds
\begin{equation*}
  \sum_{j=1}^n(b_{j-1}-b_j)(n+1-j)^{\alpha-1} \leq (n+1)^{\alpha-1}.
\end{equation*}
\end{lemma}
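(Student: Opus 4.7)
The plan is to recognize the target sum as a telescoping difference of a monotone sequence $S_n$, where the monotonicity comes from a clever application of Theorem \ref{lem:convex-quad}.

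The key step is to pick the function $f(x) := (1-x)^{\alpha-1} - 1$. This $f$ satisfies $f(0) = 0$, is nonnegative on $[0,1)$ (because $\alpha - 1 < 0$ together with $1-x \in (0,1]$ forces $(1-x)^{\alpha-1} \ge 1$), and is convex since $f''(x) = (1-\alpha)(2-\alpha)(1-x)^{\alpha-3} > 0$. Its singularity at $x = 1$ is harmless, because Theorem \ref{lem:convex-quad} only evaluates $f$ at the grid points $j/n$ for $0 \le j \le n-1$ and $j/(n+1)$ for $0 \le j \le n$, all of which lie in $[0, n/(n+1)] \subset [0,1)$, where $f$ is finite and the proof of the theorem applies verbatim.

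Substituting this $f$ into Theorem \ref{lem:convex-quad} and using the telescoping identity $\sum_{j=0}^{N-1} b_j = N^{1-\alpha}$, both sides of the theorem collapse neatly to $S_n - 1$ and $S_{n+1} - 1$ respectively, where
\begin{equation*}
S_n := \sum_{j=0}^{n-1} b_j (n-j)^{\alpha-1}.
\end{equation*}
Thus the theorem immediately delivers the monotonicity $S_n \le S_{n+1}$.

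Finally, a one-line reindexing identifies the target with $(n+1)^{\alpha-1} + (S_n - S_{n+1})$: split $\sum_{j=1}^n (b_{j-1} - b_j)(n+1-j)^{\alpha-1}$ as $\sum_{j=1}^n b_{j-1}(n+1-j)^{\alpha-1} - \sum_{j=1}^n b_j(n+1-j)^{\alpha-1}$; shifting $i = j-1$ in the first piece yields exactly $S_n$, while pulling the $j=0$ term $b_0(n+1)^{\alpha-1} = (n+1)^{\alpha-1}$ out of $S_{n+1}$ writes the second piece as $S_{n+1} - (n+1)^{\alpha-1}$. Combining gives the target bound $(n+1)^{\alpha-1} + (S_n - S_{n+1}) \le (n+1)^{\alpha-1}$. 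The only non-routine step is guessing the function $f(x) = (1-x)^{\alpha-1} - 1$; once it is in hand the rest is bookkeeping, consistent with the lemma being advertised as a direct corollary of Theorem \ref{lem:convex-quad}.
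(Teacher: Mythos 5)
Your proposal is correct and follows essentially the same route as the paper: the paper also applies Theorem \ref{lem:convex-quad} to $f(x)=(1-x)^{\alpha-1}-1$, uses the telescoping identity $n^{\alpha-1}\sum_{j=0}^{n-1}b_j=1$ to cancel the constant $-1$, and rearranges to obtain the claim; your $S_n\le S_{n+1}$ formulation is just a cleaner bookkeeping of that final rearrangement. Your side remark that the singularity of $f$ at $x=1$ is harmless because all evaluation points lie in $[0,n/(n+1)]$ is a valid (and welcome) clarification of a point the paper leaves implicit.
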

\begin{proof}
Consider the function $f(x)=(1-x)^{\alpha-1}-1$. Then it satisfies $f(x)\geq 0$, $f(0)=0$, and
also $f''(x)> 0$, i.e., convex. Hence, by Theorem \ref{lem:convex-quad}, we have
\begin{equation}\label{eqn:approx-conv}
  n^{\alpha-1}\sum_{j=0}^{n-1} b_j\big((1-jn^{-1})^{\alpha-1}-1\big) \leq (n+1)^{\alpha-1}\sum_{j=1}^n b_j\big((1-{j}({n+1})^{-1})^{\alpha-1}-1\big).
\end{equation}
Meanwhile, it can be verified directly that for all $n\in\mathbb{N}^+$,
$
  n^{\alpha-1} \sum_{j=0}^{n-1}b_j = (1-\al)\int_0^1 x^{-\al}\,dx=1,
$
i.e.,
$
  n^{\alpha-1}\sum_{j=0}^{n-1}b_j = (n+1)^{\alpha-1} \sum_{j=0}^nb_j.
$
Plugging the preceding identity into \eqref{eqn:approx-conv} yields
\begin{equation*}
  n^{\alpha-1}\sum_{j=0}^{n-1} b_j\left(1-{j}{n}^{-1}\right)^{\alpha-1} \leq (n+1)^{\alpha-1}\sum_{j=0}^n b_j\left(1-{j}({n+1})^{-1}\right)^{\alpha-1}.
\end{equation*}
which upon rearranging terms gives the desired assertion.
\end{proof}

Next we give an important $L^2(\Omega)$ stability result. The stability estimate
puts more weights on the source term $F_h^{k}$ as the index $k$ gets close to the
current time step $n$, in a manner analogous to the continuous problem.
\begin{theorem}\label{thm:stab-uni}
 Let $U_h^n$, $n=1,2,\ldots,N$, be the solution of the fully discrete scheme \eqref{eqn:fully}. Then
 with $c_\alpha=\Gamma(2-\alpha)$, for $n=1,2,\ldots,N$, we have the following stability estimate
\begin{equation}\label{eqn:stab-uni}
 \|  U_h^n \|_{L^2\II} \le \| v_h  \|_{L^2\II} + c_\al\tau^\al \sum_{k=0}^{n-1} (n-k)^{\al-1} \|  F_h^{k+1} \|_{L^2\II}.
\end{equation}
\end{theorem}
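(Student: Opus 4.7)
The plan is to test the fully discrete equation \eqref{eqn:fully} against $U_h^n$ in the $L^2(\Omega)$ inner product and then set up an induction on $n$ driven by the discrete Gronwall-type inequality that results. First I would rewrite \eqref{eqn:fully} as
\begin{equation*}
  (U_h^n,U_h^n) + c_\al\tau^\al(\nabla U_h^n,\nabla U_h^n)
  = b_{n-1}(U_h^0,U_h^n) + \sum_{j=1}^{n-1}(b_{j-1}-b_j)(U_h^{n-j},U_h^n) + c_\al\tau^\al (F_h^n,U_h^n),
\end{equation*}
noting that $b_0=1$. Since $(\nabla U_h^n,\nabla U_h^n)\ge 0$, dropping this term and applying the Cauchy--Schwarz inequality to every term on the right (then dividing by $\|U_h^n\|_{L^2\II}$) would yield the master recursion
\begin{equation*}
  \|U_h^n\|_{L^2\II} \le b_{n-1}\|v_h\|_{L^2\II} + \sum_{j=1}^{n-1}(b_{j-1}-b_j)\|U_h^{n-j}\|_{L^2\II} + c_\al\tau^\al \|F_h^n\|_{L^2\II}.
\end{equation*}

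With this recursion in hand, I would prove the claimed bound \eqref{eqn:stab-uni} by strong induction on $n$. The base case $n=1$ is immediate since $b_0=1=1^{\al-1}$. For the inductive step, I would substitute the inductive hypothesis for each $\|U_h^{n-j}\|_{L^2\II}$ into the recursion, use the telescoping identity $b_{n-1}+\sum_{j=1}^{n-1}(b_{j-1}-b_j)=b_0=1$ to collect all $\|v_h\|_{L^2\II}$ contributions into a clean coefficient of $1$, and then swap the order of summation in the double sum involving the source terms. The swap reduces the coefficient in front of $c_\al\tau^\al\|F_h^{k+1}\|_{L^2\II}$ (for $k=0,\ldots,n-2$) to
\begin{equation*}
  \sum_{j=1}^{n-k-1}(b_{j-1}-b_j)(n-k-j)^{\al-1},
\end{equation*}
and the desired coefficient is $(n-k)^{\al-1}$. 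After relabeling $m=n-k-1$, this is precisely the inequality furnished by Lemma \ref{lem:key1}. The contribution at the current time level $k=n-1$ is the stand-alone term $c_\al\tau^\al\|F_h^n\|_{L^2\II}$, which matches the target weight $(n-(n-1))^{\al-1}=1$.

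The main obstacle is thus already handled upstream: the nontrivial point is verifying that the weights $(b_{j-1}-b_j)$, convolved against $(n-k-j)^{\al-1}$, do not exceed $(n-k)^{\al-1}$ in size. This is exactly the content of Lemma \ref{lem:key1}, which in turn rests on the monotonicity of the weighted rectangular quadrature in Theorem \ref{lem:convex-quad} and the midterm weighted inequality in Lemma \ref{lem:mid-term}. Once those three technical results are invoked, the induction closes cleanly and the kernel profile $(n-k)^{\al-1}$ in \eqref{eqn:stab-uni} appears naturally as the discrete counterpart of the Mittag-Leffler resolvent weight $t^{\al-1}$ from the continuous problem.
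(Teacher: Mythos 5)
Your proposal is correct and follows essentially the same route as the paper's own proof: test \eqref{eqn:fully} against $U_h^n$, drop the nonnegative gradient term, apply Cauchy--Schwarz to obtain the recursion, then close a strong induction by telescoping the weights $b_j$ for the initial-data term and invoking Lemma \ref{lem:key1} (after a change in the order of summation) for the source terms. The index bookkeeping you describe matches the paper's, so no further comment is needed.
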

\begin{proof}
We show the assertion by mathematical induction. First
we consider the case $n=1$. Multiplying both sides of \eqref{eqn:fully} by $U_h^1$
and integrating over the domain $\Om$ yield
\begin{equation*}
 \|  U_h^1 \|_{L^2\II}^2+c_\al\tau^\al \|\nabla U_h^1\|_{L^2\II}^2= (U_h^0,U_h^1)+ c_\al\tau^\al (F_h^1,U_h^1).
\end{equation*}
Then the Cauchy-Schwartz inequality and Young's inequality give
\begin{equation*}
  \|  U_h^1\|_{L^2\II} \le  \|  U_h^0 \|_{L^2\II}+ c_\al\tau^\al \|F_h^1\|_{L^2\II}.
\end{equation*}
Now assume the estimate holds up to some $n\geq 1$. A similar argument yields
\begin{equation*}
\begin{split}
   \| U_h^{n+1}\|_{L^2(\Omega)} &\le b_n \|  U_h^0 \|_{L^2(\Omega)} + \sum_{j=1}^{n} (b_{j-1}-b_{j})  \| U^{n+1-j} \|_{L^2(\Omega)}
    +  c_\al \tau^\al \|  F^{n+1}  \|_{L^2(\Omega)} \\
    & \leq b_n \|  U_h^0 \|_{L^2(\Omega)} + \sum_{j=1}^{n} (b_{j-1}-b_{j}) \bigg(\|U_h^0\|_{L^2\II}+  c_\al \tau^\al \|  F^{n+1}  \|_{L^2(\Omega)} \\
    &\quad +c_\alpha\tau^\alpha\sum_{k=0}^{n-j}(n+1-j-k)^{\alpha-1}\|F_h^{k+1}\|_{L^2\II}\bigg)\\
    &= \|  U_h^0 \|_{L^2(\Omega)}  +
    c_\al\tau^\al \sum_{j=1}^{n} (b_{j-1}-b_{j})\sum_{k=0}^{n-j} (n+1-j-k)^{\al-1} \|  F_h^{k+1} \|_{L^2\II}
    +c_\al \tau^\al \|  F^{n+1}  \|_{L^2(\Omega)} .
\end{split}
\end{equation*}
Then by changing the order of summation and applying Lemma \ref{lem:key1} we have
\begin{equation*}
\begin{split}
 \sum_{j=1}^{n} (b_{j-1}-b_{j})\sum_{k=0}^{n-j} (n+1-j-k)^{\al-1} \|  F_h^{k+1} \|_{L^2\II}
=&\sum_{k=0}^{n-1}  \|  F_h^{k+1} \|_{L^2\II} \sum_{j=1}^{n-k} (b_{j-1}-b_{j})(n+1-j-k)^{\al-1} \\
\le& \sum_{k=0}^{n-1}  \|  F_h^{k+1} \|_{L^2\II} (n+1-k)^{\al-1}, \\
\end{split}
\end{equation*}
and consequently
\begin{equation*}
\begin{split}
   \| U_h^{n+1}\|_{L^2(\Omega)}
    &\le \|  U_h^0 \|_{L^2(\Omega)}  +
    c_\al\tau^\al \sum_{k=0}^{n-1} (n+1-k)^{\al-1} \|  F_h^{k+1} \|_{L^2\II}
    +c_\al \tau^\al \|  F^{n+1}  \|_{L^2(\Omega)} \\
    &= \|  U_h^0 \|_{L^2(\Omega)} + c_\al\tau^\al \sum_{k=0}^{n}(n+1-k)^{\al-1}  \|  F_h^{k+1} \|_{L^2\II},
\end{split}
\end{equation*}
which completes the induction step and the desired assertion follows.
\end{proof}

The next lemma gives one useful estimate for bounding the local truncation error.
\begin{lemma}\label{lem:key2}
For any $\delta\in(0,\al]$, there exists a constant $c>0$, independent of $n$, such that for all $n\geq 2$
\begin{equation*}
\sum_{k=1}^{n-1} \left((n-k)^{1-\al}-(n-k-1)^{1-\al} \right)
k^{\delta-2} \le c (n-1)^{-\al}.
\end{equation*}
\end{lemma}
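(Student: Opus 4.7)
The plan is to split the sum into the isolated $k=n-1$ term and the bulk sum over $1\le k\le n-2$, then use a concavity bound on the difference $(n-k)^{1-\alpha}-(n-k-1)^{1-\alpha}$, followed by a dyadic split at $k\approx n/2$.

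First I would isolate $k=n-1$. At this index the difference equals $1^{1-\alpha}-0^{1-\alpha}=1$, so the contribution is $(n-1)^{\delta-2}$. Since $\delta\le\alpha<1$, we have $\delta-2+\alpha<0$, so $(n-1)^{\delta-2}\le (n-1)^{-\alpha}$ for $n\ge 2$, which matches the target bound.

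For $1\le k\le n-2$, I would apply concavity of $x\mapsto x^{1-\alpha}$ (equivalently, the mean value theorem) to obtain
\begin{equation*}
(n-k)^{1-\alpha}-(n-k-1)^{1-\alpha}\le (1-\alpha)(n-k-1)^{-\alpha}.
\end{equation*}
Then I split the remaining sum at $k=\lfloor n/2\rfloor$. In the low range $1\le k\le n/2$, I use $(n-k-1)^{-\alpha}\le c\, n^{-\alpha}$ and bound
\begin{equation*}
\sum_{k=1}^{\lfloor n/2\rfloor} k^{\delta-2}\le \sum_{k=1}^\infty k^{\delta-2}\le c,
\end{equation*}
where convergence uses $\delta-2\le\alpha-2<-1$ since $\alpha<1$; this yields a contribution of order $n^{-\alpha}$. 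In the high range $\lfloor n/2\rfloor\le k\le n-2$, I use $k^{\delta-2}\le c\, n^{\delta-2}$ and estimate the remaining factor by an integral comparison:
\begin{equation*}
\sum_{k=\lfloor n/2\rfloor}^{n-2}(n-k-1)^{-\alpha}=\sum_{j=1}^{n-\lfloor n/2\rfloor-1}j^{-\alpha}\le c\,n^{1-\alpha},
\end{equation*}
yielding a contribution of order $n^{\delta-1-\alpha}\le c\, n^{-\alpha}$ since $\delta-1<0$.

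Adding the three pieces produces a bound of order $n^{-\alpha}$, which is equivalent to $(n-1)^{-\alpha}$ up to a constant, giving the lemma. I do not anticipate a serious obstacle here; the only care needed is to verify that the exponents line up — specifically, that $\delta\le\alpha$ and $\alpha<1$ together ensure $\delta-2<-1$ (for absolute convergence of the tail $\sum k^{\delta-2}$) and $\delta-1-\alpha\le -\alpha$ (for the high-range bound). The isolated $k=n-1$ term, where the concavity estimate degenerates because $(n-k-1)^{-\alpha}$ is undefined, is the only mildly delicate point and is handled by direct computation.
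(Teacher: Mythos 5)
Your proof is correct and follows essentially the same strategy as the paper's: isolate one degenerate endpoint term, then split the remaining sum at the midpoint so that on each half one of the two factors $(n-k)^{1-\al}-(n-k-1)^{1-\al}$ and $k^{\delta-2}$ can be bounded while the other is summed. The only cosmetic differences are that the paper isolates $k=1$ and bounds the bulk by comparison with the integral $\int_2^n(n-s)^{-\al}(s-1)^{\delta-2}\,ds$, whereas you isolate $k=n-1$ and work with the discrete sums directly via the mean value theorem; both routes are equally valid.
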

\begin{proof}
The case $n=2$ is trivial, and we consider only $n\ge 3$.
Let $d_k=((n-k)^{1-\al}-(n-k-1)^{1-\al} ) k^{\delta-2}$.
First, we observe that for $k=1$
\begin{equation*}
  \begin{aligned}
    d_1  &= (n-1)^{1-\al}-(n-2)^{1-\al}= (1-\alpha)\int_{1}^{2}(n-s)^{-\al}\,ds\\
     &{\le} c (n-2)^{-\al}\le c((n-1)/3)^{-\al}\le c(n-1)^{-\al}.
  \end{aligned}
\end{equation*}
The sum of the remaining terms can be bounded directly by
\begin{equation*}
\begin{split}
  \sum_{k=2}^{n-1}d_k & = \sum_{k=2}^{n-1} (1-\al)k^{\delta-2}\int_k^{k+1} (n-s)^{-\al}\,ds
  \le c\sum_{k=2}^{n-1} \int_k^{k+1} (n-s)^{-\al}(s-1)^{\delta-2}\,ds \\
    &=  c \int_2^{n} (n-s)^{-\al}(s-1)^{\delta-2}\,ds = c \int_1^{n-1} (n-s-1)^{-\al}s^{\delta-2}\,ds\\
    &= c \int_1^{\frac{n-1}{2}}(n-s-1)^{-\alpha}s^{\delta-2}ds + c
  \int_{\frac{n-1}{2}}^{n-1} (n-s-1)^{-\al}s^{\delta-2}\,ds := \mathrm{I} + \mathrm{II}.
\end{split}
\end{equation*}
Then the desired result follows from
\begin{equation*}
  \mathrm{I} \le c\int_1^{\frac{n-1}{2}} (n-s-1)^{-\al}s^{\delta-2}\,ds \le c(n-1)^{-\al}\int_1^{\frac{n-1}{2}} s^{\delta-2}\,ds\le c(n-1)^{-\al}
\end{equation*}
and
\begin{equation*}
  \mathrm{II}\le c (n-1)^{\delta-2}\int_{\frac{n-1}{2}}^{n-1} (n-s-1)^{-\al}\,ds \le c (n-1)^{\delta-\al-1} \le c(n-1)^{-\al}.
\end{equation*}
\end{proof}

Next we derive an error bound on the local truncation error $r_n$ defined by
\begin{equation}\label{eqn:trun}
    r_n = \|\partial_t^\alpha u_h(t_n)-\bPtau^\al u_h(t_n)\|_{L^2\II},\quad \quad n=1,2,...,N.
\end{equation}
In view of Theorems \ref{thm:reg-init} and \ref{thm:reg} in the appendix, we make the following temporal regularity assumption.
\begin{assumption}\label{ass:reg}
The solution $u$ satisfies the following smoothing properties
$$ \|  u(t) \|_{L^2\II} \le c\quad \text{and}   \quad \| \partial_t^m u(t)  \|_{L^2\II} \le ct^{\delta-m},  $$
where $\delta>0$ and the integer $m\ge1$.
\end{assumption}

\begin{remark}
By Theorems \ref{thm:reg-init} and \ref{thm:reg}, the regularity condition in Assumption \ref{ass:reg} holds with
$\delta=\sigma\alpha$, $\sigma\in(0,1]$, for initial data $v\in D(A^\sigma)$ and source term
$f\in W^{2,\infty}(0,T;L^2(\Omega))$. Under these conditions, Assumption \ref{ass:reg} holds also for the
semidiscrete Galerkin approximation $u_h$, with a constant $c$ independent of $h$.
\end{remark}

\begin{lemma}\label{thm:trun}
Let Assumption \ref{ass:reg} hold, and $r_n$ be the local truncation error defined by \eqref{eqn:trun}. Then
\begin{equation*}
 r_n\leq
\begin{cases}
\quad c\tau^{\delta-\al} \quad & \text{if } n=1,  \\
\quad c(n-1)^{-\al}\tau^{\delta-\al}\quad &\text{if } n\ge2.
\end{cases}
\end{equation*}
\end{lemma}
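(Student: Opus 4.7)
My plan is to write the local truncation error as
\begin{equation*}
  r_n = \frac{1}{\Gamma(1-\alpha)}\Big\|\sum_{j=0}^{n-1}\int_{t_j}^{t_{j+1}} \Big(u_h'(s) - \frac{u_h(t_{j+1})-u_h(t_j)}{\tau}\Big)(t_n-s)^{-\alpha}\,ds\Big\|_{L^2(\Omega)},
\end{equation*}
split the sum into the contribution from the first subinterval $[0,\tau]$ and from $[\tau,t_n]$, and treat them separately. The first piece must be handled by bare norm bounds (since $u_h''$ may blow up at $t=0$), while the second can use a Taylor-type bound on the difference quotient and is the interval where Lemma \ref{lem:key2} enters.

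For the term on $[0,\tau]$, I would bound $\|u_h'(s)\|_{L^2\II}\le cs^{\delta-1}$ directly and use $\|\tau^{-1}(u_h(\tau)-u_h(0))\|_{L^2\II}\le \tau^{-1}\int_0^\tau\|u_h'(s)\|_{L^2\II}\,ds\le c\tau^{\delta-1}$ by Assumption \ref{ass:reg}. For $n=1$ this immediately gives $r_1\le c\tau^{\delta-\alpha}$ using the Beta-function integral $\int_0^\tau s^{\delta-1}(\tau-s)^{-\alpha}\,ds=c\tau^{\delta-\alpha}$ and $\int_0^\tau(\tau-s)^{-\alpha}\,ds=c\tau^{1-\alpha}$. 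For $n\ge2$, on $[0,\tau]$ the kernel satisfies $(t_n-s)^{-\alpha}\le((n-1)\tau)^{-\alpha}$, so pulling this factor out of both integrals gives a bound $c(n-1)^{-\alpha}\tau^{-\alpha}\cdot\tau^{\delta}=c(n-1)^{-\alpha}\tau^{\delta-\alpha}$.

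For the remaining intervals $[t_j,t_{j+1}]$ with $j\ge 1$, I would write
\begin{equation*}
  u_h'(s) - \frac{u_h(t_{j+1})-u_h(t_j)}{\tau} = \frac{1}{\tau}\int_{t_j}^{t_{j+1}}\!\int_\xi^s u_h''(\eta)\,d\eta\,d\xi,
\end{equation*}
and use the bound $\|u_h''(\eta)\|_{L^2\II}\le c\eta^{\delta-2}\le c(j\tau)^{\delta-2}$ from Assumption \ref{ass:reg} with $m=2$, valid because $\eta\ge t_j>0$. This yields $\|u_h'(s) - \tau^{-1}(u_h(t_{j+1})-u_h(t_j))\|_{L^2\II}\le c\tau^{\delta-1}j^{\delta-2}$. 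Integrating against $(t_n-s)^{-\alpha}$ on $[t_j,t_{j+1}]$ and summing produces
\begin{equation*}
  c\tau^{\delta-\alpha}\sum_{j=1}^{n-1} j^{\delta-2}\big((n-j)^{1-\alpha}-(n-j-1)^{1-\alpha}\big),
\end{equation*}
which is precisely the sum bounded by $c(n-1)^{-\alpha}$ in Lemma \ref{lem:key2}. Adding the two contributions gives the claimed estimate.

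The main obstacle I anticipate is the endpoint $j=0$: the naive second-derivative bound fails because $\|u_h''(\eta)\|_{L^2\II}$ is not integrable near $0$ when $\delta<1$, so one must instead argue with the first derivative and the weighted integrability $\int_0^\tau s^{\delta-1}(t_n-s)^{-\alpha}\,ds$. Once this singular boundary layer is isolated, the interior sum is tailor-made for Lemma \ref{lem:key2}, and the factor $\tau^{\delta-\alpha}$ arises uniformly in both regimes, so the two pieces combine cleanly into a single bound of the asserted form.
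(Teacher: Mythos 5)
Your proposal is correct and follows essentially the same route as the paper: isolate the first subinterval (where only the first-derivative bound $\|u_h'(s)\|_{L^2(\Omega)}\le cs^{\delta-1}$ is available), use the Taylor identity with $\|u_h''\|_{L^2(\Omega)}\le ct_k^{\delta-2}$ on the interior subintervals, and close the sum with Lemma \ref{lem:key2}. The individual estimates, including the factor $(t_n-s)^{-\alpha}\le((n-1)\tau)^{-\alpha}$ on $[0,\tau]$ for $n\ge2$, match the paper's treatment of $r_{n,0}$ and $r_{n,k}$.
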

\begin{proof}
Using Assumption \ref{ass:reg}, for $n=1$, we have the following estimate (with $c_\alpha^\prime=1/\Gamma(2-\alpha)$)
\begin{equation}\label{eqn:r1}
  \begin{split}
    r_1  
    &\le c^\prime_\alpha\tau^{-1}
     \bigg|\hspace{-0.6mm}\bigg|\int_0^{\tau} (\tau-s)^{-\al} \int_0^{\tau} (u_h'(s)-u_h'(y))\,dy\,ds  \bigg|\hspace{-0.6mm}\bigg|_{L^2\II}\\
   &\le c^\prime_\alpha\tau^{-1}\int_0^{\tau} (\tau-s)^{-\al} \int_0^{\tau} \|u_h'(s)\|_{L^2\II}  + \|u_h'(y)\|_{L^2\II}\,dy\,ds  \\
    &\le c^\prime_\alpha\tau^{-1}\int_0^{\tau} (\tau-s)^{-\al} \int_0^{\tau} (s^{\delta-1}  + y^{\delta-1})\,dy\,ds \le c \tau^{\delta-\al}.
  \end{split}
\end{equation}
Now we consider the case $n\ge2$. Then
\begin{equation*}
\begin{split}
 r_n  &= c_\alpha^\prime\|
 \sum_{k=0}^{n-1}\int_{t_k}^{t_{k+1}} (t_n-s)^{-\al} \bigg(u_h'(s) - \frac{u_h(t_{k+1})-u_h(k)}{\tau}\bigg)\,ds  \|_{L^2\II}\\
 &\le c\sum_{k=0}^{n-1} \|\int_{t_k}^{t_{k+1}} (t_n-s)^{-\al} \bigg(u_h'(s) - \frac{u_h(t_{k+1})-u_h(k)}{\tau}\bigg)\,ds\|_{L^2\II}
 :=  c\sum_{k=0}^{n-1} r_{n,k}.
\end{split}
\end{equation*}
The first term $r_{n,0}$ can be bounded using Assumption \ref{ass:reg} and the argument for \eqref{eqn:r1} as
\begin{equation}\label{eqn:rn0}
\begin{split}
r_{n,0}  
&\le  c\int_{0}^{t_{1}} (t_n-s)^{-\al} \|u_h'(s)\|_{L^2\II} \,ds + c\tau^{-1}\int_{0}^{t_{1}} (t_n-s)^{-\al} \int_{0}^{t_{1}} \|u_h'(y)\|_{L^2\II} \,dy \,ds\\
&\le c (t_n-t_1)^{-\al}\int_{0}^{t_{1}} s^{\delta-1}\,ds + {c\tau^{\delta-1}\int_0^{t_1}(t_n-s)^{-\al}\,ds} \le c(n-1)^{-\al} \tau^{\delta-\al}.
\end{split}
\end{equation}
Next we derive estimates for $r_{n,k}$, $k=1,2,...,n-1$. To this end,
we use the identity
\begin{equation*}
u_h'(s) - \frac{u_h(t_{k+1})-u_h(k)}{\tau}
 = \frac{1}{\tau}\int_{t_k}^{t_{k+1}} u_h'(s)-u_{h}'(y)\,dy
  = \frac{1}{\tau}\int_{t_k}^{t_{k+1}} \int_{y}^{s}u_h''(z)\,dz\,dy
\end{equation*}
and apply Assumption \ref{ass:reg} such that $ \|   u_{h}''(z) \|_{L^2\II} \le c t_k^{\delta-2}$ with  $c$ independent of $t$ and $h$
to deduce
\begin{equation*}
\bigg|\hspace{-0.6mm}\bigg|u_h'(s) - \frac{u_h(t_{k+1})-u_h(k)}{\tau}\bigg|\hspace{-0.6mm}\bigg|_{L^2\II}
\le \frac{1}{\tau}\int_{t_k}^{t_{k+1}} \int_{\min(s,y)}^{\max(s,y)}  \| u_h''(z)\|_{L^2\II}\,dz\,dy \le c \tau t_k^{\delta-2}.
\end{equation*}
Thus we obtain
\begin{equation*}
\begin{split}
r_{n,k} &\le c\tau t_k^{\delta-2} \int_{t_k}^{t_{k+1}} (t_n-s)^{-\al}\,ds
= c\tau^{2-\al} t_k^{\delta-2}\left( (n-k)^{1-\al}-(n-k-1)^{1-\al}\right)\\
&=c\tau^{\delta-\al} k^{\delta-2} \left( (n-k)^{1-\al}-(n-k-1)^{1-\al}\right).
\end{split}
\end{equation*}
Then by Lemma \ref{lem:key2} we deduce
\begin{equation*}
\sum_{k=1}^{n-1} r_{n,k} \le c\tau^{\delta-\al} \sum_{k=1}^{n-1} k^{\delta-2} \left( (n-k)^{1-\al}-(n-k-1)^{1-\al}\right)
\le c\tau^{\delta-\al} (n-1)^{-\al}.
\end{equation*}
This together with \eqref{eqn:rn0} yields the desired estimate and hence completes the proof.
\end{proof}

Next we derive the error estimate $e_h^n=u_h(t_n)-U_h^n$, $n=1,2,...,N$. First, we observe that
the nodal error $e_h^n$ satisfies $e_h^0=0$ and the following error equation
\begin{equation*}
 \bar{\partial}_t^\alpha e_h^n + A_h e_h^n = \bar{\partial}_t^\alpha u_h(t_n)-\Dal u_h(t_n).
\end{equation*}

The next theorem gives an optimal (uniform in time $t$) error estimate for the fully discrete scheme \eqref{eqn:fully}.
\begin{theorem}\label{thm:error-nonsmooth}
Assume $f\in W^{2,\infty}(0,T;L^2(\Omega))$ and $v\in D(A^\sigma)$, with $0<\sigma\leq1$.
Let $u_h$ and $U_h^n$ be the solutions of problems \eqref{eqn:fdesemidis} and \eqref{eqn:fully}, respectively. Then there holds
\begin{equation*}
   \| u_h(t_n)-U_h^n \|_{L^2(\Om)} \le c \tau^{\sigma\alpha} \left(\| A^\sigma v \|_{L^2\II} + \| f \|_{W^{2,\infty}(0,T;L^2\II)}\right).
\end{equation*}
\end{theorem}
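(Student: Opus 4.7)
The plan is to combine the L1-scheme stability estimate of Theorem \ref{thm:stab-uni} with the local truncation error bounds of Lemma \ref{thm:trun}. First I would introduce the nodal error $e_h^n = u_h(t_n) - U_h^n$, which by subtracting the fully discrete equation \eqref{eqn:fully} from the semidiscrete equation \eqref{eqn:fdesemidis} evaluated at $t = t_n$ satisfies $e_h^0 = 0$ and the error equation
\begin{equation*}
\bPtau^\alpha e_h^n + A_h e_h^n = \bPtau^\alpha u_h(t_n) - \Dal u_h(t_n) =: g^n, \qquad n=1,\dots,N.
\end{equation*}
This has exactly the structure of \eqref{eqn:fully} with vanishing initial data and right-hand side $g^n$, so Theorem \ref{thm:stab-uni} applies directly and gives
\begin{equation*}
\|e_h^n\|_{L^2(\Omega)} \le c_\alpha \tau^\alpha \sum_{k=0}^{n-1}(n-k)^{\alpha-1}\, r_{k+1},
\end{equation*}
where $r_{k+1} = \|g^{k+1}\|_{L^2(\Omega)}$ coincides with the local truncation error defined in \eqref{eqn:trun}.

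Next I would verify Assumption \ref{ass:reg} with $\delta = \sigma\alpha$ and $m = 2$. The remark immediately following Assumption \ref{ass:reg} (which relies on the temporal regularity results Theorems \ref{thm:reg-init} and \ref{thm:reg} in the appendix) records that, under the hypotheses $v \in D(A^\sigma)$ and $f \in W^{2,\infty}(0,T;L^2(\Omega))$, the semidiscrete solution $u_h$ satisfies the assumption with constants that depend linearly on $\|A^\sigma v\|_{L^2(\Omega)} + \|f\|_{W^{2,\infty}(0,T;L^2(\Omega))}$ and are independent of $h$. Lemma \ref{thm:trun} then yields $r_1 \le c\tau^{\sigma\alpha-\alpha}$ and $r_{k+1} \le c k^{-\alpha}\tau^{\sigma\alpha-\alpha}$ for $k \ge 1$. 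Substituting these bounds produces
\begin{equation*}
\|e_h^n\|_{L^2(\Omega)} \le c\tau^{\sigma\alpha} \Big( n^{\alpha-1} + \sum_{k=1}^{n-1}(n-k)^{\alpha-1} k^{-\alpha} \Big).
\end{equation*}

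The only remaining step, and the one I expect to require the most care, is to show that the bracketed quantity is bounded uniformly in $n$. The leading term $n^{\alpha-1}$ is harmless. For the convolution-type sum, the natural comparison is with the Beta integral: substituting $k = ns$ suggests the asymptotic value $\int_0^1 (1-s)^{\alpha-1} s^{-\alpha}\,ds = \Gamma(\alpha)\Gamma(1-\alpha) < \infty$ for $\alpha\in(0,1)$. Concretely I would split the sum at $k = \lfloor n/2 \rfloor$, bound $(n-k)^{\alpha-1} \le (n/2)^{\alpha-1}$ on the first half to obtain a Riemann sum against $s^{-\alpha}$, and bound $k^{-\alpha} \le (n/2)^{-\alpha}$ on the second half to obtain a Riemann sum against $(1-s)^{\alpha-1}$; both pieces are $O(1)$ uniformly in $n$. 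Combining this uniform bound with the linear data-dependence from Assumption \ref{ass:reg} delivers the asserted estimate $\|u_h(t_n) - U_h^n\|_{L^2(\Omega)} \le c\tau^{\sigma\alpha}\bigl(\|A^\sigma v\|_{L^2(\Omega)} + \|f\|_{W^{2,\infty}(0,T;L^2(\Omega))}\bigr)$.
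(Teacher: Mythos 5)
Your proposal is correct and follows essentially the same route as the paper: apply the stability estimate of Theorem \ref{thm:stab-uni} to the error equation with zero initial error, insert the truncation error bounds of Lemma \ref{thm:trun} with $\delta=\sigma\alpha$, and bound the resulting convolution sum $\sum_{k=1}^{n-1}(n-k)^{\alpha-1}k^{-\alpha}$ uniformly via comparison with the Beta integral $\int_0^1(1-x)^{\alpha-1}x^{-\alpha}\,dx$. Your splitting of that sum at $k=\lfloor n/2\rfloor$ is merely a more explicit justification of the Riemann-sum comparison the paper states directly.
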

\begin{proof}
By Theorem \ref{thm:stab-uni} and Lemma \ref{thm:trun}, with $\delta=\sigma\alpha$, we have
\begin{equation*}
 \begin{split}
 \| u_h(t_n)-U_h^n \|_{L^2(\Om)} &\le c\tau^{\al} \sum_{k=0}^{n-1} (n-k)^{\al-1} \|  \bPtau^\al u_h(t_{k+1})-\Dal u_h(t_{k+1}) \|_{L^2\II}\\
 &\le c\tau^{\sigma\alpha}\left(\| A^\sigma v \|_{L^2\II} + \| f \|_{W^{2,\infty}(0,T;L^2\II)}\right)
 \bigg(1+\sum_{k=1}^{n-1} (n-k)^{\al-1} k^{-\al}\bigg).\\
 \end{split}
\end{equation*}
Then the following uniform bound
\begin{equation*}
 \sum_{k=1}^{n-1} (n-k)^{\al-1} k^{-\al} =\frac{1}{n}\sum_{k=1}^{n-1}\left(1-\frac{k}{n}\right)^{\alpha-1}\left(\frac{k}{n}\right)^{-\alpha}
 \leq \int_0^1 (1-x)^{\al-1} x^{-\al}dx \le c
\end{equation*}
yields the desired estimate.
\end{proof}

Last, we can state an error estimate on the fully discrete approximation $U_h^n$, which follows from
Theorems \ref{thm:error-semi} and \ref{thm:error-nonsmooth} by the triangle inequality.
\begin{theorem}\label{thm:error-fully}
Assume $f\in W^{2,\infty}(0,T;L^2\II)$ and $v\in D(A^\sigma)$, with $0<\sigma\leq 1$. Let
$u$ and $U_h^n$ be the solutions of problems \eqref{eqn:fde} and \eqref{eqn:fully}, respectively. Then
with $\ell_h=|\log h|$, there holds
\begin{equation*}
   \| u(t_n)-U_h^n \|_{L^2(\Om)} \le c(h^2\ell_ht^{-\alpha(1-\sigma)}+\tau^{\sigma\alpha})\| A^\sigma v \|_{L^2\II} + c(h^2\ell_h^2+\tau^{\sigma\alpha}) \| f \|_{W^{2,\infty}(0,T;L^2\II)}.
\end{equation*}
\end{theorem}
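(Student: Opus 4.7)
The plan is to obtain this estimate as an immediate consequence of the triangle inequality, splitting the total error into a spatial semidiscrete part and a temporal L1 part. I would write
\begin{equation*}
u(t_n) - U_h^n = \bigl(u(t_n) - u_h(t_n)\bigr) + \bigl(u_h(t_n) - U_h^n\bigr),
\end{equation*}
take $L^2\II$ norms, and apply the triangle inequality. All the substantive work has already been done in Theorem \ref{thm:error-semi} (for the spatial error) and Theorem \ref{thm:error-nonsmooth} (for the temporal error combined with the fully discrete stability from Theorem \ref{thm:stab-uni} and the truncation bound in Lemma \ref{thm:trun}), so the remaining task is purely organizational.

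First, I would bound the spatial semidiscrete error by directly applying Theorem \ref{thm:error-semi} with the chosen initial approximation $v_h = P_h v$ and source $f_h = P_h f$, yielding
\begin{equation*}
\|u(t_n) - u_h(t_n)\|_{L^2\II} \le c\, h^2 \ell_h^2 \bigl(t_n^{-\alpha(1-\sigma)} \|A^\sigma v\|_{L^2\II} + \|f\|_{L^\infty(0,T;L^2\II)}\bigr).
\end{equation*}
Next, I would bound the temporal error between the semidiscrete and fully discrete solutions by invoking Theorem \ref{thm:error-nonsmooth}, which gives
\begin{equation*}
\|u_h(t_n) - U_h^n\|_{L^2\II} \le c\,\tau^{\sigma\alpha}\bigl(\|A^\sigma v\|_{L^2\II} + \|f\|_{W^{2,\infty}(0,T;L^2\II)}\bigr).
\end{equation*}

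Finally, I would add the two bounds, group terms multiplying $\|A^\sigma v\|_{L^2\II}$ and terms multiplying $\|f\|_{W^{2,\infty}(0,T;L^2\II)}$ separately, and use the continuous embedding $W^{2,\infty}(0,T;L^2\II) \hookrightarrow L^\infty(0,T;L^2\II)$ to absorb the $L^\infty$ norm of $f$ from the spatial estimate into the stronger norm appearing in the temporal estimate. After this regrouping the claimed bound follows directly. There is no real obstacle here — the only mild point worth noting is the mismatch in log powers (the $h^2 \ell_h$ factor attached to the initial data and $h^2 \ell_h^2$ attached to the source), which simply reflects that $\ell_h \le \ell_h^2$ and the two contributions can be combined under the larger factor if desired.
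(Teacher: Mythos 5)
Your proposal is correct and is essentially identical to the paper's own argument: the paper derives Theorem \ref{thm:error-fully} precisely by splitting $u(t_n)-U_h^n$ into the semidiscrete error (Theorem \ref{thm:error-semi}) and the temporal error (Theorem \ref{thm:error-nonsmooth}) and applying the triangle inequality. Your side remark about the log powers is also apt --- the $h^2\ell_h$ factor on the initial-data term cannot be improved from $h^2\ell_h^2$ by this route, so the consistent statement carries $\ell_h^2$ on both spatial contributions, exactly as your combination produces.
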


\subsection{Error analysis of the POD approximation}
Next we derive the error estimates for the POD approximation $U_m^n$.
First we recall an approximation property of the Ritz projection
operator $R_h^m$ defined in \eqref{eqn:Ritz-pod} within the ensemble \cite[Lemma 3 and Corrolary 3]{KunischVolkein:2001}.

\begin{lemma}\label{lem:Rhm}
For every $m=1,...,r$, the Ritz projection operator $R_h^m$ satisfies
\begin{equation*}
\frac{1}{N} \sum_{n=1}^N \left(\|\nabla(U_h^n-R_h^m U_h^n)   \|_{L^2\II}^2 + \| \nabla( \bPtau^\al  U_h^n-\bPtau^\al R_h^m U_h^n)   \|_{L^2\II}^2 \right)
\le c\sum_{j=m+1}^r \widetilde \la_j
\end{equation*}
and
\begin{equation*}
\frac{1}{N} \sum_{n=1}^N \left(\|\nabla(U_h^n-R_h^m U_h^n)   \|_{L^2\II}^2 + \| \nabla( \bPtau^\al  U_h^n-\bPtau^\al R_h^m U_h^n)   \|_{L^2\II}^2 \right)
\le c h^{-2}\sum_{j=m+1}^r  \widehat \la_j
\end{equation*}
where $ \{\widetilde \la_j\}_{j=1}^r $ and $\{ \widehat \la_j\}_{j=1}^r $ denote the eigenvalues of
$\widetilde K$ and $\widehat K$ defined in \eqref{eqn:Ktilde}  and \eqref{eqn:Khat}, respectively.
\end{lemma}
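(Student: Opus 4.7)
The plan is to exploit that the Ritz projection $R_h^m$ is the $H_0^1(\Omega)$-orthogonal projection onto $X_h^m$, and then invoke the POD error identities \eqref{eqn:eh} and \eqref{eqn:el} respectively, with an inverse inequality needed in the $L^2$ case. A key preliminary observation is that the L1 operator $\bPtau^\al$ is a linear combination with deterministic coefficients of values at the grid points, so it commutes with any linear spatial projection; in particular $R_h^m \bPtau^\al U_h^n = \bPtau^\al R_h^m U_h^n$. This lets me treat the two sums in the lemma on an equal footing by adding the solution snapshots and the FDQ snapshots into a single POD average.

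For the first estimate (built on the $H_0^1(\Omega)$-POD basis $\{\wpsi_j\}$), I would write the $H_0^1$-orthogonal projection explicitly as
\begin{equation*}
R_h^m \chi = \sum_{j=1}^m (\nabla \chi,\nabla \wpsi_j)\,\wpsi_j, \qquad \chi\in X_h,
\end{equation*}
which coincides with the Ritz projection defined by \eqref{eqn:Ritz-pod} because $\{\wpsi_j\}_{j=1}^m$ is an $H_0^1$-orthonormal basis of $X_h^m$. Then the sum on the left-hand side of the first estimate, multiplied by $1/(2N+1)$, is exactly the POD error functional evaluated at the snapshots $\{U_h^n,\bPtau^\al U_h^n\}$, which by \eqref{eqn:eh} equals $\sum_{j=m+1}^r \widetilde\la_j$. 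Absorbing the trivial factor $(2N+1)/N\leq 3$ into the generic constant $c$ gives the claim.

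For the second estimate (built on the $L^2\II$-POD basis $\{\hpsi_j\}$), the Ritz projection $R_h^m$ is not the expansion in the $L^2$-POD basis, but is its $H_0^1$-orthogonal alternative onto $X_h^m$. I would use the best-approximation property of $R_h^m$ in the $H_0^1$-seminorm: for any $\chi\in X_h$ and any $\fy\in X_h^m$,
\begin{equation*}
\|\nabla(\chi-R_h^m\chi)\|_{L^2\II}\leq \|\nabla(\chi-\fy)\|_{L^2\II}.
\end{equation*}
Choosing $\fy$ to be the $L^2$-orthogonal projection of $\chi$ onto $X_h^m$ and applying the inverse inequality $\|\nabla w_h\|_{L^2\II}\leq ch^{-1}\|w_h\|_{L^2\II}$ valid on the finite element space $X_h\supset X_h^m$ (hence valid on $\chi-\fy\in X_h$) produces $\|\nabla(\chi-R_h^m\chi)\|_{L^2\II}\leq ch^{-1}\|\chi-\fy\|_{L^2\II}$. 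Summing the squared bounds at $\chi=U_h^n$ and $\chi=\bPtau^\al U_h^n$, invoking the commutativity of $R_h^m$ with $\bPtau^\al$, and applying \eqref{eqn:el} yields the factor $h^{-2}\sum_{j=m+1}^r\widehat\la_j$ as claimed.

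The only genuine subtlety is the second estimate: the mismatch between the $H_0^1$-nature of the Ritz projection and the $L^2$-nature of the underlying POD basis unavoidably introduces the inverse-estimate factor $h^{-2}$, which is why the two bounds have different prefactors. Everything else is a direct invocation of the identities in Lemma \ref{key} (applied through \eqref{eqn:eh} and \eqref{eqn:el}) plus the linearity of $\bPtau^\al$. No additional regularity of $u_h$ or of the snapshots is required.
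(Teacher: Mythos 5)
Your proof is correct. The paper does not prove this lemma itself but imports it from \cite[Lemma 3 and Corollary 3]{KunischVolkein:2001}, and your argument --- identifying $R_h^m$ with the $H_0^1(\Omega)$-orthogonal expansion $\sum_{j\le m}(\nabla\chi,\nabla\wpsi_j)\wpsi_j$ for the first bound, and combining the best-approximation property of $R_h^m$ in the $H^1$-seminorm with the inverse inequality on the quasi-uniform mesh for the second, together with the commutation of $R_h^m$ with the linear, time-independent operator $\bPtau^\al$ and the identities \eqref{eqn:eh} and \eqref{eqn:el} (noting that dropping the $n=0$ term and absorbing the factor $(2N+1)/N\le 3$ into $c$ is harmless) --- is precisely the standard argument behind that citation.
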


Now we can give the error estimate for the POD approximation $U_m^n$ for smooth problem data. The result indicates that
the error incurred by using the POD basis in place of the full Galerkin FEM basis is determined by
the eigenvalues corresponding to the eigenfunctions that are not included in constructing the POD approximation.
In particular, if the eigenvalues of the correlation matrix decay rapidly, then a small number of POD basis functions in
the Galerkin POD scheme \eqref{eqn:fully-reduced} suffice the desired accuracy.
\begin{theorem}\label{thm:err-pod}
Let $u$ and $U_m^n$ be the solutions of \eqref{eqn:fde} and \eqref{eqn:fully-reduced}, respectively, and
suppose that $v\in D(A)$, and $f\in W^{2,\infty}(0,T; L^2\II)$. Then there holds
\begin{equation}\label{eqn:err-pod-1}
\frac{1}{N} \sum_{n=1}^N \| u(t_n)-U_m^n \|_{L^2\II}^2
\le c_T\bigg(\tau^{2\al} + h^4\ell_h^4  + \sum_{j=m+1}^r \widetilde \la_j\bigg)
\end{equation}
and
\begin{equation}\label{eqn:err-pod-2}
\frac{1}{N} \sum_{n=1}^N \| u(t_n)-U_m^n \|_{L^2\II}^2
\le c_T\bigg(\tau^{2\al} + h^4\ell_h^4   + h^{-2} \sum_{j=m+1}^r \widehat \la_j \bigg),
\end{equation}
where $ \{\widetilde \la_j\}_{j=1}^r $ and $ \{\widehat \la_j\}_{j=1}^r  $ denote the eigenvalues of
$\widetilde K$  and $\widehat K$ defined in \eqref{eqn:Ktilde} and \eqref{eqn:Khat}, respectively.
\end{theorem}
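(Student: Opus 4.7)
\medskip
\noindent\textbf{Proof proposal.} The plan is to decompose the error as
\[
u(t_n) - U_m^n \;=\; \underbrace{u(t_n)-U_h^n}_{=:E_1^n} \;+\; \underbrace{U_h^n - R_h^m U_h^n}_{=:\rho^n} \;+\; \underbrace{R_h^m U_h^n - U_m^n}_{=:\vartheta^n},
\]
and to bound the time-averaged $L^2\II$-norm of each piece separately. For $E_1^n$, I would invoke Theorem \ref{thm:error-fully} with $\sigma=1$, since $v\in D(A)$ is assumed; squaring and averaging in $n$ gives $\frac{1}{N}\sum_n \|E_1^n\|_{L^2\II}^2 \le c(h^4\ell_h^4 + \tau^{2\al})$, where $c$ depends on $\|Av\|_{L^2\II}$ and $\|f\|_{W^{2,\infty}(0,T;L^2\II)}$ but not on $h$, $\tau$, $m$. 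For the Ritz projection residual $\rho^n$, I would combine the Poincar\'e inequality with the first bound of Lemma \ref{lem:Rhm} to obtain $\frac{1}{N}\sum_n \|\rho^n\|_{L^2\II}^2 \le c\sum_{j=m+1}^r \widetilde\la_j$ for the $H_0^1$-POD basis (and the second bound of Lemma \ref{lem:Rhm} for the $L^2$-POD basis, yielding the $h^{-2}$ factor in \eqref{eqn:err-pod-2}).

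The heart of the argument is the estimate on $\vartheta^n\in X_h^m$. Subtracting the full discrete scheme \eqref{eqn:fully} from the reduced scheme \eqref{eqn:fully-reduced}, and using that $(\nabla\rho^n,\nabla\fy)=0$ for all $\fy\in X_h^m$ by the defining property \eqref{eqn:Ritz-pod} of $R_h^m$, I obtain the error equation
\[
(\bPtau^\al \vartheta^n,\fy) + (\nabla\vartheta^n,\nabla\fy) = -(\bPtau^\al \rho^n,\fy) \qquad \forall \fy\in X_h^m, \quad n\ge1,
\]
together with the zero initial value $\vartheta^0 = R_h^m v_h - U_m^0 = 0$. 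This is precisely a discrete subdiffusion problem in the reduced space $X_h^m$ with forcing $F_h^n = -\bPtau^\al \rho^n$, so the stability estimate of Theorem \ref{thm:stab-uni}, which is valid in any conforming subspace, applies and yields
\[
\|\vartheta^n\|_{L^2\II} \le c_\al\tau^\al \sum_{k=0}^{n-1}(n-k)^{\al-1}\|\bPtau^\al\rho^{k+1}\|_{L^2\II}.
\]

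The main obstacle is to convert this pointwise-in-$n$ convolution bound into a bound on the time-averaged square $\frac{1}{N}\sum_n\|\vartheta^n\|_{L^2\II}^2$. I would proceed by Cauchy-Schwarz on the right-hand side, using the standard elementary estimate $\sum_{j=1}^n j^{\al-1}\le cn^\al/\al$ to absorb one factor of the weights, obtaining
\[
\|\vartheta^n\|_{L^2\II}^2 \;\le\; c\tau^{2\al} n^{\al} \sum_{k=0}^{n-1}(n-k)^{\al-1}\|\bPtau^\al\rho^{k+1}\|_{L^2\II}^2.
\]
Summing in $n$, interchanging the order of summation, and applying the same weight bound again produces an outer factor $N^{2\al}\tau^{2\al}=T^{2\al}$ and leaves $\frac{1}{N}\sum_{k=1}^N \|\bPtau^\al\rho^k\|_{L^2\II}^2$, which is then controlled by Poincar\'e followed by Lemma \ref{lem:Rhm} (noting that $\bPtau^\al$ commutes with $R_h^m$, so $\bPtau^\al\rho^k = \bPtau^\al U_h^k - R_h^m\bPtau^\al U_h^k$). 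This yields $\frac{1}{N}\sum_n\|\vartheta^n\|_{L^2\II}^2 \le c_T\sum_{j=m+1}^r\widetilde\la_j$, and combining the three pieces by the triangle inequality gives \eqref{eqn:err-pod-1}. The bound \eqref{eqn:err-pod-2} follows by the identical argument with the second inequality of Lemma \ref{lem:Rhm} used in place of the first, producing the extra $h^{-2}$ factor in the POD error term.
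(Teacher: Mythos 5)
Your proposal is correct and follows essentially the same route as the paper: the same three-term splitting $u(t_n)-U_m^n=(u(t_n)-U_h^n)+\rho^n+\vartheta^n$, the same error equation $(\bPtau^\al \vartheta^n,\fy)+(\nabla\vartheta^n,\nabla\fy)=-(\bPtau^\al\rho^n,\fy)$ obtained via the Ritz projection, the same application of the $L^2\II$-stability of Theorem \ref{thm:stab-uni} in the subspace $X_h^m$, and the same use of Lemma \ref{lem:Rhm} (with the commutation of $\bPtau^\al$ and $R_h^m$) to control $\bPtau^\al\rho^n$. The only variation is that where the paper invokes Young's inequality for discrete convolutions \eqref{eqn:Young}, you use Cauchy--Schwarz together with the weight bound $\sum_{j=1}^n j^{\al-1}\le cn^{\al}$ twice; this produces the same $N^{2\al}\tau^{2\al}=T^{2\al}$ factor and is an equally valid elementary substitute.
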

\begin{proof}
We split the error $e_m^n = u(t_n) - U_m^n$ into
\begin{equation*}
 e_m^n = \left(u(t_n) - U_h^n\right) + \left(U_h^n - U_m^n\right),
\end{equation*}
and the first term can be bounded using Theorem \ref{thm:error-fully}, i.e.,
\begin{equation*}
  \frac{1}{N} \sum_{n=1}^N \| u(t_n)-U_h^n \|_{L^2\II}^2 \le
  c\left(\tau^{2\al} + h^4\ell_h^4 \right).
\end{equation*}
Hence it suffices to establish a bound for the second term $U_h^n-U_m^n$. Now we consider the splitting
\begin{equation*}
  U_h^n - U_m^n = \left(U_h^n - R_h^m U_h^n \right) + \left(R_h^m U_h^n - U_m^n\right) := \rho^n+\theta^n.
\end{equation*}
Then Lemma \ref{lem:Rhm} yields the following bound on $\rho^n$ as
\begin{equation}\label{eqn:rho-pod}
\frac{1}{N} \sum_{n=1}^N \|\rho^n  \|_{L^2\II}^2 \le c\sum_{j=m+1}^r  \widetilde \la_j
\qquad\text{and}\qquad
\frac{1}{N} \sum_{n=1}^N \| \rho^n  \|_{L^2\II}^2 \le c h^{-2}\sum_{j=m+1}^r  \widehat \la_j,
\end{equation}
for the $H_0^1\II$- and $L^2\II$-POD basis, respectively.
Next we derive an estimate on the component $\theta^n$. Using \eqref{eqn:fully-reduced}, the definition of the
Ritz projection operator $R_h^m$, and the fact that $\fy_m \in X_h^m \subset X_h $, we have
\begin{equation*}
\begin{split}
    (\bar\partial_\tau^\alpha \theta^n , \fy_m) + (\nabla \theta^n, \nabla \fy_m)
    &= (\bPtau^\alpha R_h^m U_h^n  , \fy_m) + (\nabla R_h^m U_h^n, \nabla \fy_m) - (\bPtau^\alpha U_m^n  , \fy_m) - (\nabla U_m^n, \nabla \fy_m)\\
    &=(\bPtau^\alpha R_h^m U_h^n  , \fy_m) + (\nabla U_h^n, \nabla \fy_m) - (f(t_n) , { \fy_m})\\
    &= (\bPtau^\alpha (R_h^m U_h^n - U_h^n) , \fy_m) = -(\bPtau^\alpha \rho^n , \fy_m)
\end{split}
\end{equation*}
and {$ \theta^0= 0 $}. The stability result in Theorem \ref{thm:stab-uni} yields
\begin{equation*}
 \|  \theta^n   \|_{L^2\II}  \le c\tau^\al \sum_{k=0}^{n-1} (n-k)^{\al-1}\|  \bPtau^\alpha \rho^{k+1}  \|_{L^2\II} .
\end{equation*}
Appealing to Young's inequality for the Laplace type discrete convolution \cite[Theorem 20.18]{HewittRoss:1979}, i.e.,
\begin{equation}\label{eqn:Young}
  \sum_{n=0}^N\left(\sum_{k=0}^{n}a_{n-k}b_{k}\right)^2\leq \left(\sum_{n=0}^Na_n\right)^2\sum_{n=0}^Nb_n^2,
\end{equation}
we deduce
\begin{equation*}
\begin{split}
  \sum_{n=1}^N  \bigg(\sum_{k=0}^{n-1} (n-k)^{\al-1}\|  \bPtau^\alpha \rho^{k+1} \|_{L^2\II} \bigg)^2
  &\le \bigg(\sum_{n=1}^N n^{\al-1}\bigg)^2 \sum_{n=1}^N \|  \bPtau^\alpha \rho^n \|_{L^2\II}^2
   \le cN^{2\al} \sum_{n=1}^N \|  \bPtau^\alpha \rho^{n} \|_{L^2\II}^2.
\end{split}
\end{equation*}
Then by Lemma \ref{lem:Rhm}, we have
\begin{equation*}
\begin{split}
  \frac{1}{N}\sum_{n=1}^N \|  \theta^n   \|_{L^2\II}^2
  &\le \frac{c \tau^{2\al}N^{2\al}}{N} \sum_{n=1}^N \|  \bPtau^\alpha \rho^n \|_{L^2\II}^2
  = \frac{c T^{2\al} }{N}\sum_{n=1}^N \|  \bPtau^\alpha \rho^n \|_{L^2\II}^2 \le c_T\sum_{j=m+1}^r \widetilde \la_j.
\end{split}
\end{equation*}
Likewise, for the $L^2\II$-POD basis, we deduce
\begin{equation*}
  \frac{1}{N}\sum_{n=1}^N \|  \theta^n   \|_{L^2\II}^2
  \le \frac{c T^{2\al} }{N}\sum_{n=1}^N \|  \bPtau^\alpha \rho^n \|_{L^2\II}^2 \le c_T h^{-2}\sum_{j=m+1}^r  \widehat \la_j.
\end{equation*}
This completes the proof of the theorem.
\end{proof}

The error estimate in Theorem \ref{thm:err-pod} covers only smooth initial data $v\in D(A)$. In the case of nonsmooth initial
data $v\in D(A^\sigma)$, $0<\sigma< 1$, one can derive an analogous error estimate; see the following remark. We note
that the regularity of problem data (or solution) does not enter the error estimate due to
the POD approximation directly. Hence, in principle, the approach is capable of handling nonsmooth problem data, if the solution
singularity is built-in in the ensemble of snapshots and thus captured by the POD basis directly.
\begin{remark}
We comment on nonsmooth problem data. Consider the  $H_0^1\II$ POD for
$f\in W^{2,\infty}(0,T;L^2\II)$ and nonsmooth initial data $v\in D(A^\sigma)$, $0<\sigma<1$. Then in view of Theorem
\ref{thm:error-fully}, we have
\begin{equation*}
  \frac{1}{N} \sum_{n=1}^N \| u(t_n)-U_h^n \|_{L^2\II}^2 \le
  c\big(\tau^{2\sigma\al} + h^4\ell_h^4  \frac{1}{N} \sum_{n=1}^N t_n^{-2\al(1-\sigma)}\big).
\end{equation*}
Meanwhile, the summation can be bounded as
\begin{equation*}
  \begin{aligned}
  \frac{1}{N} \sum_{n=1}^N t_n^{-2\al(1-\sigma)} & = \frac{\tau^{-2\alpha(1-\sigma)}}{N}\sum_{n=1}^Nn^{-2\alpha(1-\sigma)}\leq
  \frac{\tau^{-2\alpha(1-\sigma)}}{N}\int_1^N s^{-2\alpha(1-\sigma)}ds \leq c_T\ell_{\alpha,\sigma,\tau},
  \end{aligned}
\end{equation*}
where the constant $\ell_{\alpha,\sigma,\tau}$ is given by
\begin{equation*}
  \begin{aligned}
  \ell_{\alpha,\sigma,\tau}= \left\{\begin{aligned}
    &\tau^{1-2\alpha(1-\sigma)}, & \quad \alpha(1-\sigma)> 1/2, \\
    &\log\tfrac{T}{\tau}, & \quad \alpha(1-\sigma)=1/2,\\
    &1, &\quad \alpha(1-\sigma)<1/2.
  \end{aligned}\right.
  \end{aligned}
\end{equation*}
Consequently, by repeating the arguments in Theorem \ref{thm:err-pod}, we obtain the following error estimate
for the POD approximation $\{U_m^n\}$ (with the $H_0^1\II$ POD basis)
\begin{equation*}
\frac{1}{N} \sum_{n=1}^N \| u(t_n)-U_m^n \|_{L^2\II}^2
\le c_T\bigg(\tau^{2\sigma\al} + h^4\ell_h^4\ell_{\alpha,\sigma,\tau}  + \sum_{j=m+1}^r\widetilde \la_j \bigg),
\end{equation*}
and a similar error estimate holds for the $L^2\II$ POD basis. Interestingly, for the case $\alpha(1-\sigma)<1/2$,
the error estimate in the space remains uniform with respect to the time step size $\tau$.
\end{remark}

Last we briefly comment on the case when the FDQs are not
included in the snapshots.
\begin{remark}\label{rmk:DQ}
In our construction of the POD basis, we have included the FDQs in the snapshots.
When the FDQs $\bar\partial_\tau^\alpha U_h^n$, $n=1,2,...,N$, are not contained in the snapshot set,
the error formula \eqref{eqn:eh} for $H_0^1\II$ POD basis becomes
\begin{equation*}
  \frac1{N+1}  \sum_{n=0}^N \| U_h^n- \sum_{j=1}^m (\nabla U_h^n,\nabla \wpsi_j) \wpsi_j \|_{H_0^1\II}^2
  = \sum_{j=m+1}^r \widetilde \la_j.
\end{equation*}
Further for the FDQs we have
\begin{equation*}
\begin{split}
 \frac1N\sum_{n=1}^N \| \bPtau^\al  U^n_h- \sum_{j=1}^m (\nabla \bPtau^\al U_h^n, \nabla \wpsi_j) \wpsi_j\|_{H_0^1\II}^2
 =&   \frac1N\sum_{n=1}^N \| \bPtau^\al  \big(U^n_h- \sum_{j=1}^m (\nabla U_h^n, \nabla \wpsi_j) \wpsi_j\big) \|_{H_0^1\II}^2
\end{split}
\end{equation*}
Let $\overline U_h^n = U^n_h- \sum_{j=1}^m (\nabla U_h^n, \nabla \wpsi_j) \wpsi_j$. By
the monotonicity of the weights $\{b_j\}$, we have
\begin{equation*}
\begin{split}
  \left\| \bPtau^\al  \overline{U}_h^n\right\|_{H_0^1\II}^2 &\leq
  c_\alpha\tau^{-2\alpha}\bigg(b_0\|\overline{U}_h^n\|_{H_0^1\II} + b_{n-1}\|\overline{U}_h^0\|_{H_0^1\II} + \sum_{j=1}^{n-1}(b_{j-1}-b_j)\|\overline{U}_h^{n-j}\|_{H_0^1\II}\bigg)^2\\
  &\le c_\alpha\tau^{-2\alpha}b_n^2\|\overline{U}_h^0\|_{H_0^1\II}^2+ c_\alpha\tau^{-2\alpha}\bigg( \sum_{j=0}^{n}g_j\|\overline{U}_h^{n-j}\|_{H_0^1\II} \bigg)^2,
\end{split}
\end{equation*}
with $g_j=b_{j-1}-b_{j}$ and $b_{-1}=2$. Then by Young's inequality for discrete convolution, cf. \eqref{eqn:Young}, we arrive at
\begin{equation*}
\frac1N\sum_{n=1}^N \bigg( \sum_{j=0}^ng_j\|\overline U_h^{n-j}\|_{H_0^1\II} \bigg)^2
\le \frac1N \bigg(\sum_{n=0}^N g_j\bigg)^2 \sum_{n=0}^N\|\overline{ U}_h^{n-j}\|_{H_0^1\II}^2
\le \frac{c}{N+1}\sum_{n=0}^N\|\overline{U}_h^n\|_{H_0^1\II}^2.
\end{equation*}
Meanwhile, by the Cauchy-Schwarz inequality, we have
\begin{equation*}
  \begin{aligned}
   \sum_{n=1}^N b_n^2 & = (1-\al)^{-2}\sum_{n=1}^N \bigg(\int_n^{n+1} s^{-\al} \,ds\bigg)^2
   &\le c \int_1^{N+1} s^{-2\al} \,ds \le \left\{\begin{aligned} cN^{1-2\al}, &\quad \mbox{if } \alpha<1/2, \\
   c\log N, &\quad \mbox{if } \alpha = 1/2, \\
   c,        &\quad \mbox{if } \alpha >1/2.
   \end{aligned}\right.
  \end{aligned}
\end{equation*}
Consequently, there holds
\begin{equation*}
  \frac1N\sum_{n=1}^N \| \bPtau^\al  U^n_h- \sum_{j=1}^m (\nabla \bPtau^\al U_h^n, \nabla \wpsi_j) \wpsi_j\|_{H_0^1\II}^2
  \le  c_T\big(\ell_{\alpha,\tau}\|\overline{U}_h^0\|_{H_0^1\II}^2 + \tau^{-2\alpha} \sum_{j=m+1}^r  \widetilde \la_j \big),
\end{equation*}
where the constant $\ell_{\alpha,\tau}$ is given by
\begin{equation*}
  \ell_{\alpha,\tau} = \left\{\begin{aligned}
    1 &\quad \mbox{ if } \alpha < 1/2, \\
    \log \tfrac{T}{\tau} &\quad \mbox{ if } \alpha = 1/2,\\
    \tau^{-2\alpha + 1} &\quad \mbox{ if } \alpha>1/2.
  \end{aligned}\right.
\end{equation*}
For $\alpha\leq 1/2$, the term involving the initial data $U_h^0$ is of higher order in comparison with the last term.
Hence the error for $H_0^1\II$ Galerkin POD \eqref{eqn:fully-reduced} (without FDQs in the snapshots)
can be bounded by
\begin{equation*}
\frac{1}{N} \sum_{n=1}^N \| u(t_n)-U_m^n \|_{L^2\II}^2
\le c_T\bigg(\tau^{2\al} + h^4\ell_h^4  + \ell_{\alpha,\tau}\| U^0_h- \sum_{j=1}^m (\nabla U_h^0, \nabla \wpsi_j) \wpsi_j\|_{H_0^1\II}^2  +  \tau^{-2\alpha} \sum_{j=m+1}^r \widetilde \la_j \bigg).
\end{equation*}
In comparison with the error estimate \eqref{eqn:err-pod-1} with FDQs
from Theorem \ref{thm:err-pod}, this estimate contains an extra factor $\tau^{-2\alpha}$ and
an approximation error of the initial data $v_h$ (within the POD basis $X_h^m$). For the fractional order
$\alpha\to1$, the factor recovers that for the classical diffusion equation \cite{KunischVolkein:2001}.
\end{remark}

\section{Numerical results}\label{sec:numeric}
Now we present numerical results to verify the convergence theory in Section \ref{sec:conv} and
the efficiency of the proposed Galerkin-L1-POD scheme.

\subsection{Numerical results for one-dimensional examples}
First we present numerical results for one-dimensional examples to verify the convergence
analysis in Section \ref{sec:conv}. We consider the subdiffusion model in the
following two cases:
\begin{itemize}
  \item[(a)] $\Omega=(0,1)$,  $v=x(1-x) \in D(A)$, and $f(x,t)=e^{t\cos(2\pi x)}\in W^{2,\infty}(0,T;L^2\II)$;
  \item[(b)] $\Omega=(0,1)$,  $v=\chi_{(0,1/2)}(x)\in  D(A^{1/4-\ep})$ for $\epsilon\in(0,1/4)$, and $f(x,t)=e^{t\cos(2\pi x)}\in W^{2,\infty}(0,T;L^2\II)$.
\end{itemize}
In the computations, we divide the unit interval $\Omega$ into $M$ equally spaced subintervals
with a mesh size $h=1/M$. Likewise, we fix the time step size $\tau$ at $\tau=T/N$.

First we examine the temporal convergence by setting $T=0.1$ (the spatial convergence was already examined
in \cite{JinLazarovZhou:2013,JinLazarovPasciakZhou:2013a}). We take a small mesh size $h=10^{-3}$,
so that the spatial discretization error is negligible. The exact solution can be expressed
in terms of the Mittag-Leffler function $E_{\al,\beta}(z)$, cf. \eqref{op:E}, which
can be evaluated efficiently by an algorithm
developed in \cite{Seybold:2008}.  The numerical results by the fully discrete scheme \eqref{eqn:fully}
are given in Table \ref{tab:fde-uniform}. In the table, \texttt{rate} refers to
the empirical rate when the time step size $\tau$ halves, and the numbers in the bracket denote the
theoretical predictions from Theorem \ref{thm:error-fully}. For cases (a)
and (b), the empirical rate is $O(\tau^\alpha)$ and $O(\tau^{\alpha/4})$, respectively,
which agree well with the theoretical ones. The convergence rate of the L1 scheme improves with
the smoothness of the initial data $v$ (while keeping the
smooth right hand side $f$ fixed) and the increase of the fractional order $\alpha$, since
 the solution regularity improves accordingly.

\begin{table}[htb!]
\caption{The maximum error $ e_{\max} = \max_{1\le n\le N} \|  U_h^n - u(t_n)\|_{L^2\II}$ for initial data (a)
and (b) with $T=0.1$, $h=10^{-3}$, $\tau=T/N$.}\label{tab:fde-uniform}
\begin{center}
\vspace{-.3cm}{\setlength{\tabcolsep}{7pt}
     \begin{tabular}{|c|c|cccccc|c|}
     \hline
      $\alpha$ &  $N$    &$1000$ &$2000$ & $4000$ & $8000$ &$16000$&$32000$
      &rate \\
     \hline
     $0.35$ & (a)  &2.67e-3 &2.27e-3 &1.90e-3 &1.58e-3 &1.29e-3 &1.05e-3 &
     $\approx$ 0.29 (0.35)\\
             &  (b)   &2.48e-2 &2.41e-2 &2.29e-2 &2.15e-2 &1.99e-2 &1.82e-2  &
             $\approx$ 0.10 (0.09)\\
      \hline
     $0.5$ &  (a)   &9.26e-4 &6.73e-4 &4.86e-4 &3.50e-4 &2.51e-4 &1.80e-4 &
     $\approx$ 0.48 (0.50)\\
             &  (b)   &2.03e-2 &1.81e-2 &1.64e-2 &1.50e-2 &1.37e-2 &1.26e-2  &
             $\approx$ 0.13 (0.13)\\
      \hline
     $0.75$ &  (a)    &1.82e-4 &1.09e-4 &6.43e-5 &3.77e-5 &2.17e-5 &1.25e-5   &$\approx$ 0.76 (0.75)\\
             &  (b)    &2.52e-2 &2.20e-2 &1.91e-2 &1.64e-2 &1.39e-2  & 1.15e-2  &$\approx$ 0.21 (0.19)\\
     \hline
     \end{tabular}}
\end{center}
\end{table}

Next we illustrate the proposed Galerkin-L1-POD scheme, and the numerical results are
given in Table \ref{tab:semi_c} for the choice $T=1$ and $N=200$. Here the average error
$e$ and the POD approximation error $e^m$ are defined by
\begin{equation*}
  e = \frac{1}{N}\sum_{n=1}^N \| U_h^n -u(t_n) \|_{L^2\II}^2\quad \mbox{and}\quad   e^m =\frac{1}{N}\sum_{n=1}^N \| U_h^n -U_m^n \|_{L^2\II}^2,
\end{equation*}
respectively. Like before, we use the notation $~~\widetilde{}~~$ and $~~\widehat{}~~$ over $e^m$ to denote $H_0^1\II$-
and $L^2\II$-POD basis, respectively, and the subscript $w$ to indicate that the snapshots do not contain FDQs.
For example, $\widetilde{e}^m$ and $\widetilde{e}_w^m$ denote the error between the full Galerkin
solution $U_h^n$ and the solution of the Galerkin POD formulation with $m$ $H_0^1\II$ POD basis functions, with and
without FDQs, respectively. For both cases (a) and (b), with three or four POD basis functions, the POD approximation
error falls below the error due to temporal discretization, and the convergence is relatively independent of the
fractional order $\alpha$. The fast convergence of the Galerkin POD scheme is also expected from the exponential
decay of the eigenvalues of the correlation matrix, cf. Fig. \ref{fig:1d}. Further, the inclusion of FDQs does not
affect much the POD approximation error, with their errors within a factor of ten, even though their presence improves the apparent theoretical
convergence rates, cf. Theorem \ref{thm:err-pod} and Remark \ref{rmk:DQ}. The effect seems to be compensated by the
smaller eigenvalues, cf. Fig. \ref{fig:1d}. These observations show the efficiency of the Galerkin POD scheme, which
has only a degree of freedom of three or four at each time level, compared with one thousand for the standard Galerkin FEM.

For case (b), the Galerkin POD scheme requires slightly more POD
basis functions in order to reach the same level of the accuracy. This is expected, since for nonsmooth
data $v$, it can only be accurately described by more Fourier modes, and all these modes persist in the dynamics
due to the ``slow'' decay of subdiffusion. Hence the solution manifold may exhibit richer structure
than case (a), and consequently, more POD basis functions are needed to accurately capture the dynamics.
However, the eigenvalues in the nonsmooth case decays also exponentially, cf. Fig. \ref{fig:1d}. Hence, the
proposed scheme also works well  with low regularity data.

The efficiency of the proposed scheme relies crucially on constructing ``good'' POD basis. To this end, we
present the first five POD basis functions for case (b) in Fig. \ref{fig:podbasis}. The $H_0^1\II$- and
$L^2\II$ POD basis take very different shapes: for the $H_0^1\II$ POD, the first basis function captures the singularity
(caused by the discontinuous initial data), whereas the higher POD modes are very smooth. In contrast, for the $L^2\II$
POD, all the first five POD basis functions contain singularities (in the middle of the interval as well as oscillations
around the end points). Namely, the $H_0^1\II$ POD seems to better aggregate the solution singularity (actually into one single
POD basis). Nonetheless, the $L^2\II$ and $H_0^1\II$ POD-basis exhibit quite similar approximation property, and thus
can provide equally good approximations of the solution manifold, cf. Table \ref{tab:semi_c}.

\begin{table}[h]
\caption{The numerical results of the Galerkin POD for cases (a) and (b) with $T=1$,
$h=10^{-3}$, $N=200$, and with $m$ POD basis functions. }\label{tab:semi_c}
\vspace{-.2cm}
\begin{center}
\begin{tabular}{|c|c|c|c|c|c|c|c|}
\hline
   $\al$& case  & $m$ & $e$ & ${\widetilde e}^m$ & ${\widetilde e}^m_w$ &$\widehat e^m$ &$\widehat e^m_w$ \\ \hline
 & (a) &3  &1.82e-7 & 9.34e-12 & 3.03e-12 &9.45e-12  &3.02e-12  \\ 
 & &4  &1.82e-7 & 4.72e-13 & 3.71e-14 &4.83e-13  &3.19e-14  \\  \cline{2-8}
0.3 & (b) &3  &3.83e-6 & 4.65e-6 & 3.59e-6 &4.36e-6  &3.60e-6  \\
& &4 &3.83e-6 & 2.73e-9 & 2.41e-9 &2.73e-9  &2.41e-9  \\  \hline
 & (a) &3  &4.46e-7& 1.01e-10 & 6.25e-12 &1.11e-10  &6.22e-12  \\  
 & &4  &4.46e-7 & 5.33e-13 & 8.87e-14 &5.41e-13  &8.28e-14  \\  \cline{2-8}
0.5 & (b) &3  &1.70e-5 & 1.81e-5 & 6.70e-6 &1.59e-5  &7.08e-6  \\
& &4 &1.70e-5 & 3.67e-8 & 6.70e-9 &3.43e-8  &6.69e-9  \\  \hline
 & (a) &3  &2.89e-7 &4.70e-10 & 1.35e-11 &4.98e-10  &1.34e-11  \\ 
 & &4  &2.89e-7 & 1.33e-12 & 1.85e-13 &1.29e-12  &1.81e-13  \\  \cline{2-8}
0.7 & (b) &4  &2.80e-5 & 2.51e-5 & 1.83e-7 &1.45e-5  &1.78e-7  \\
& &5 &2.80e-5 & 2.49e-8 & 5.00e-9 &2.42e-8  &4.99e-9  \\  \hline
\end{tabular}
\end{center}
\end{table}

\begin{figure}[hbt!]
\centering

\subfigure[case (a), $\al=0.3$]{
\includegraphics[trim = .1cm .1cm .1cm .1cm, clip=true,width=4.5cm]{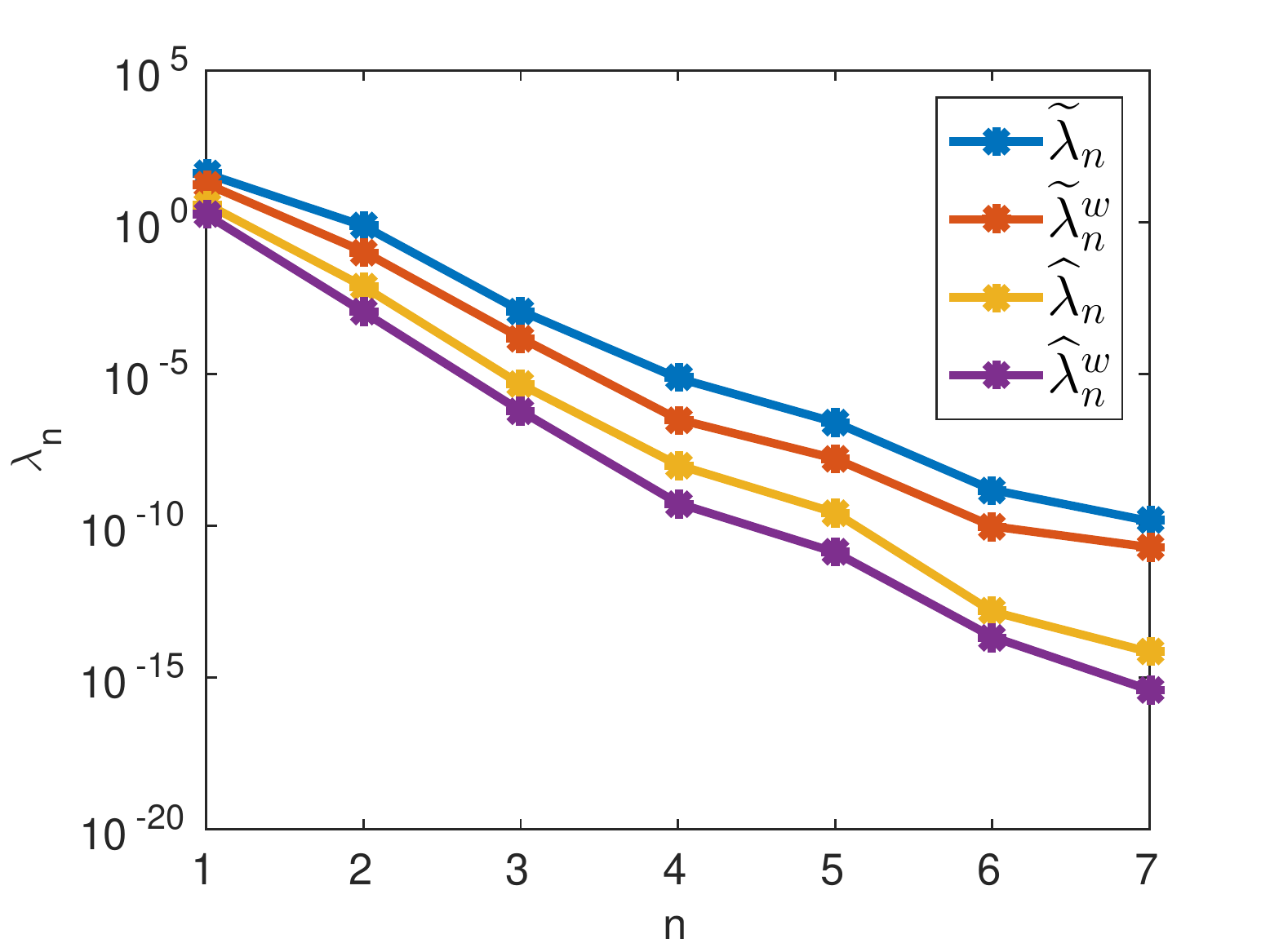}
}
\subfigure[case (a), $\al=0.5$]{
\includegraphics[trim = .1cm .1cm .1cm .1cm, clip=true,width=4.5cm]{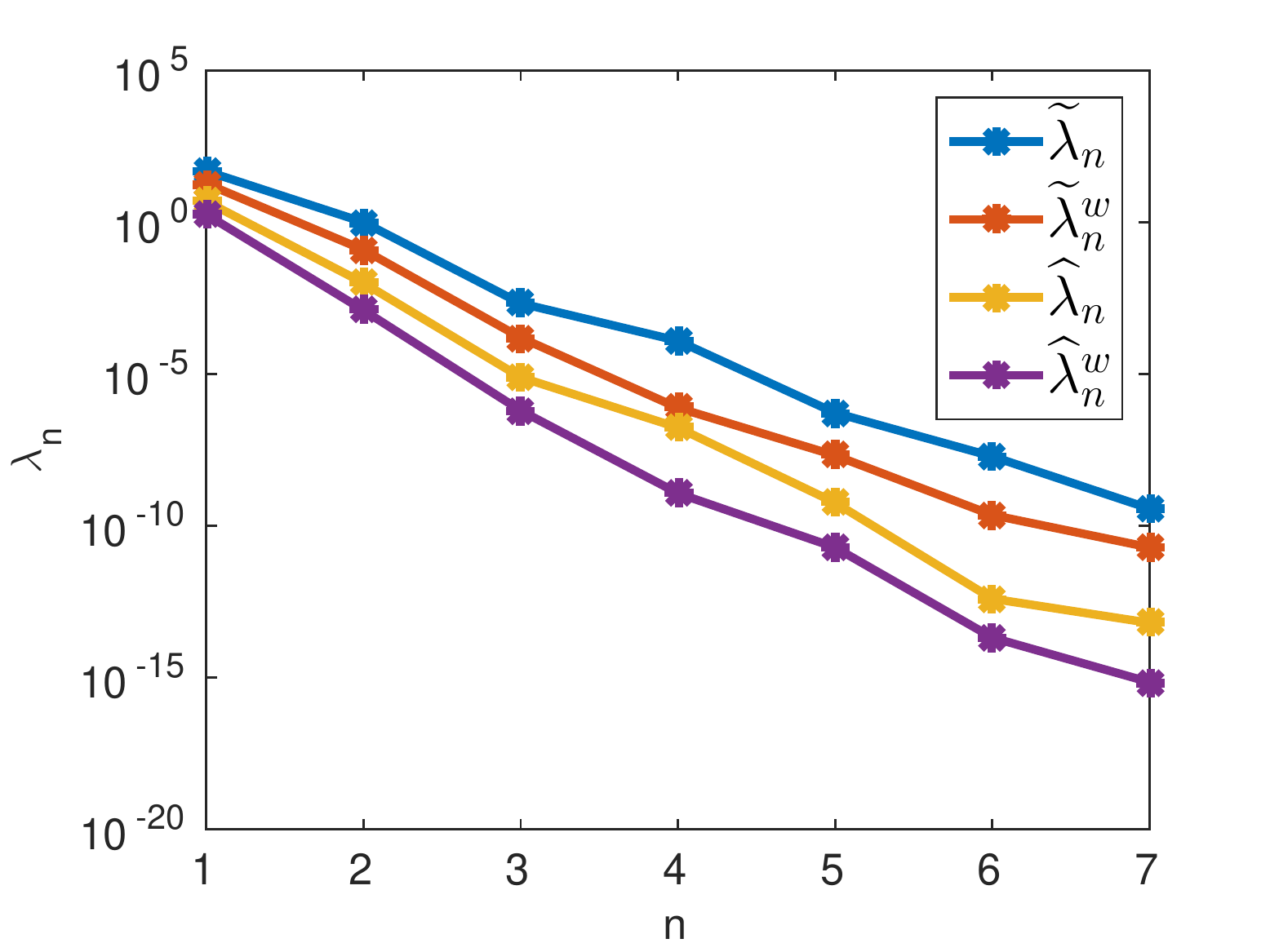}
}
\subfigure[case (a), $\al=0.7$]{
\includegraphics[trim = .1cm .1cm .1cm .1cm, clip=true,width=4.5cm]{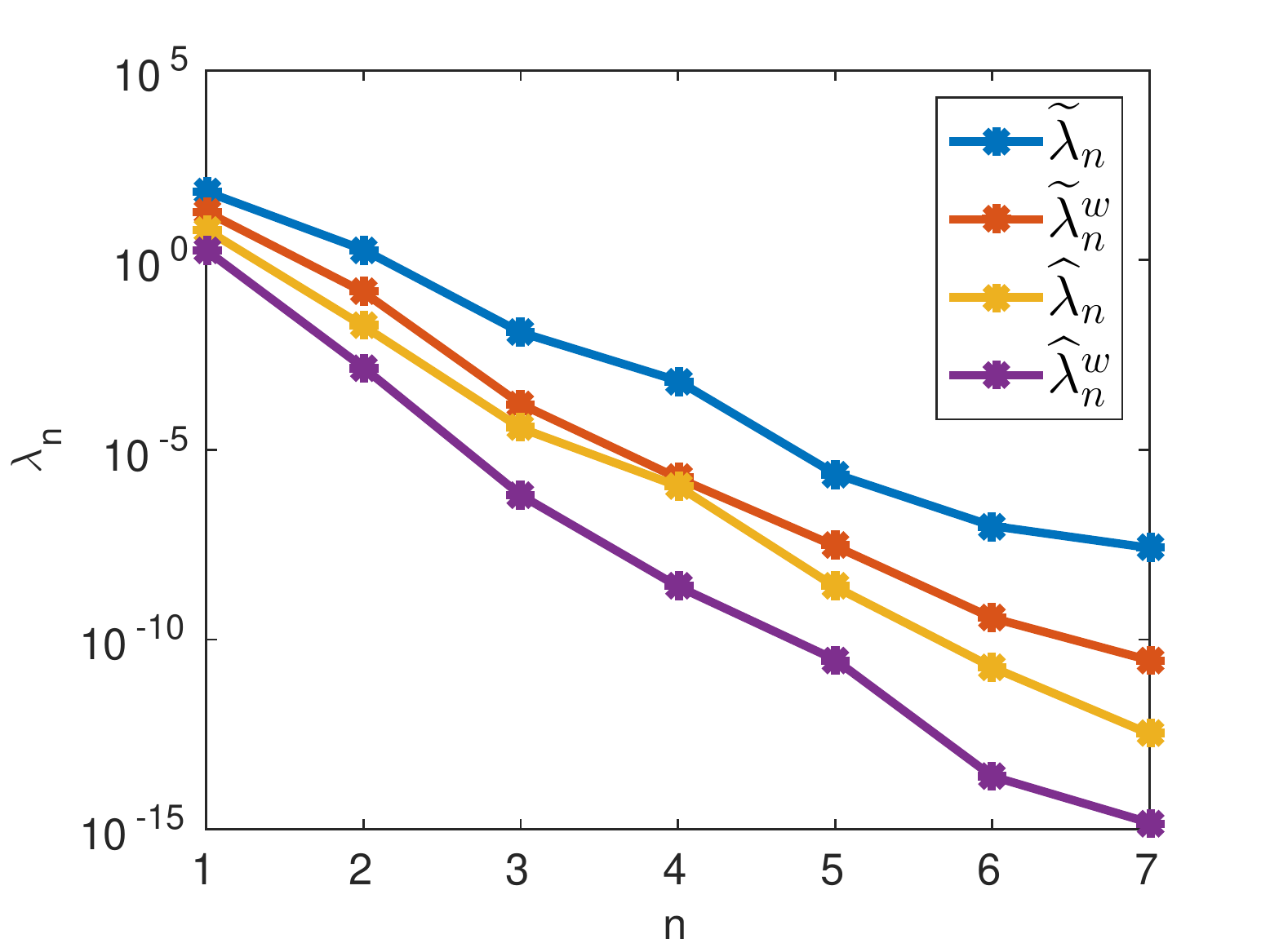}
}

\subfigure[case (b), $\al=0.3$]{
\includegraphics[trim = .1cm .1cm .1cm .1cm, clip=true,width=4.5cm]{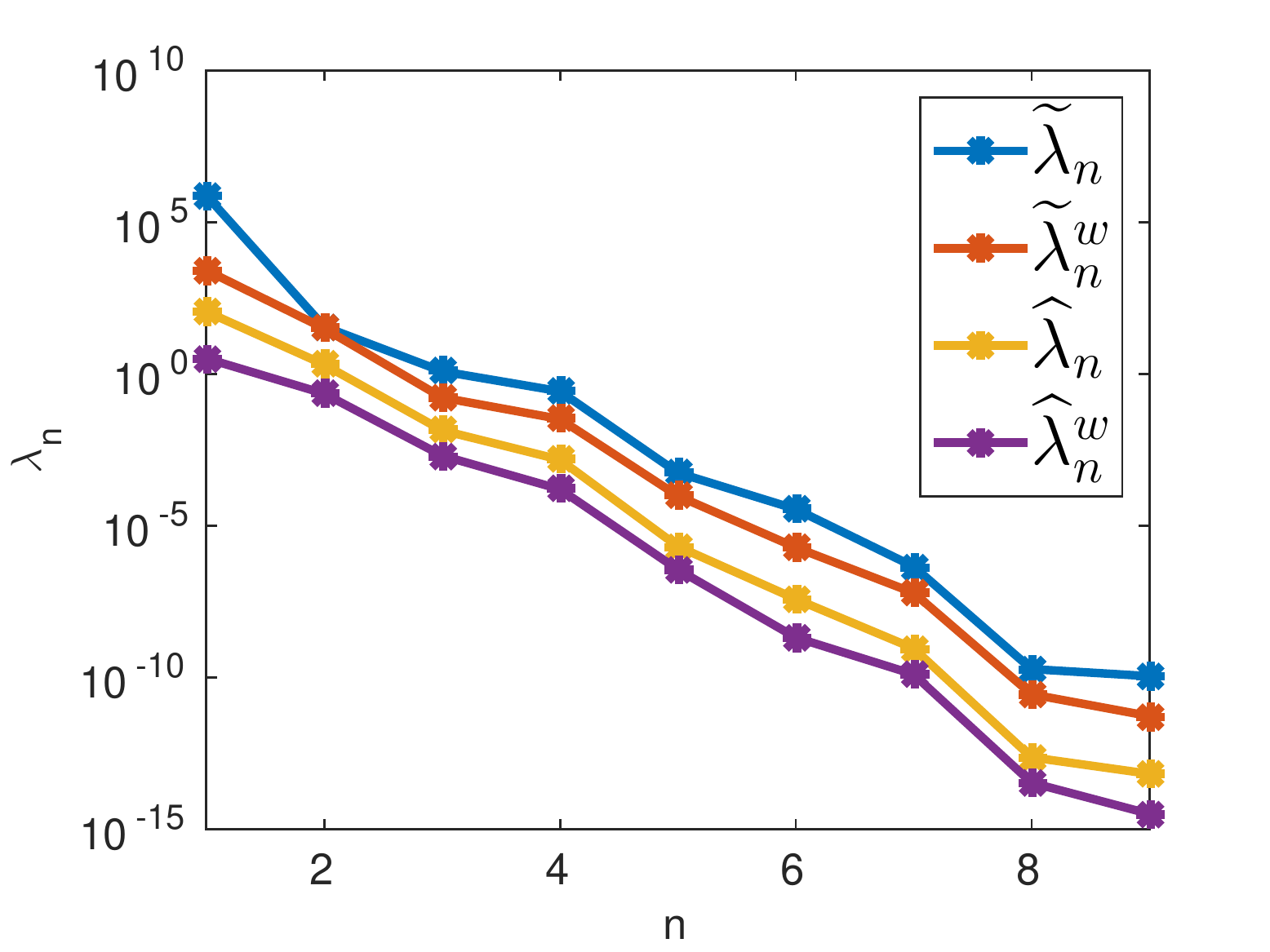}
}
\subfigure[case (b), $\al=0.5$]{
\includegraphics[trim = .1cm .1cm .1cm .1cm, clip=true,width=4.5cm]{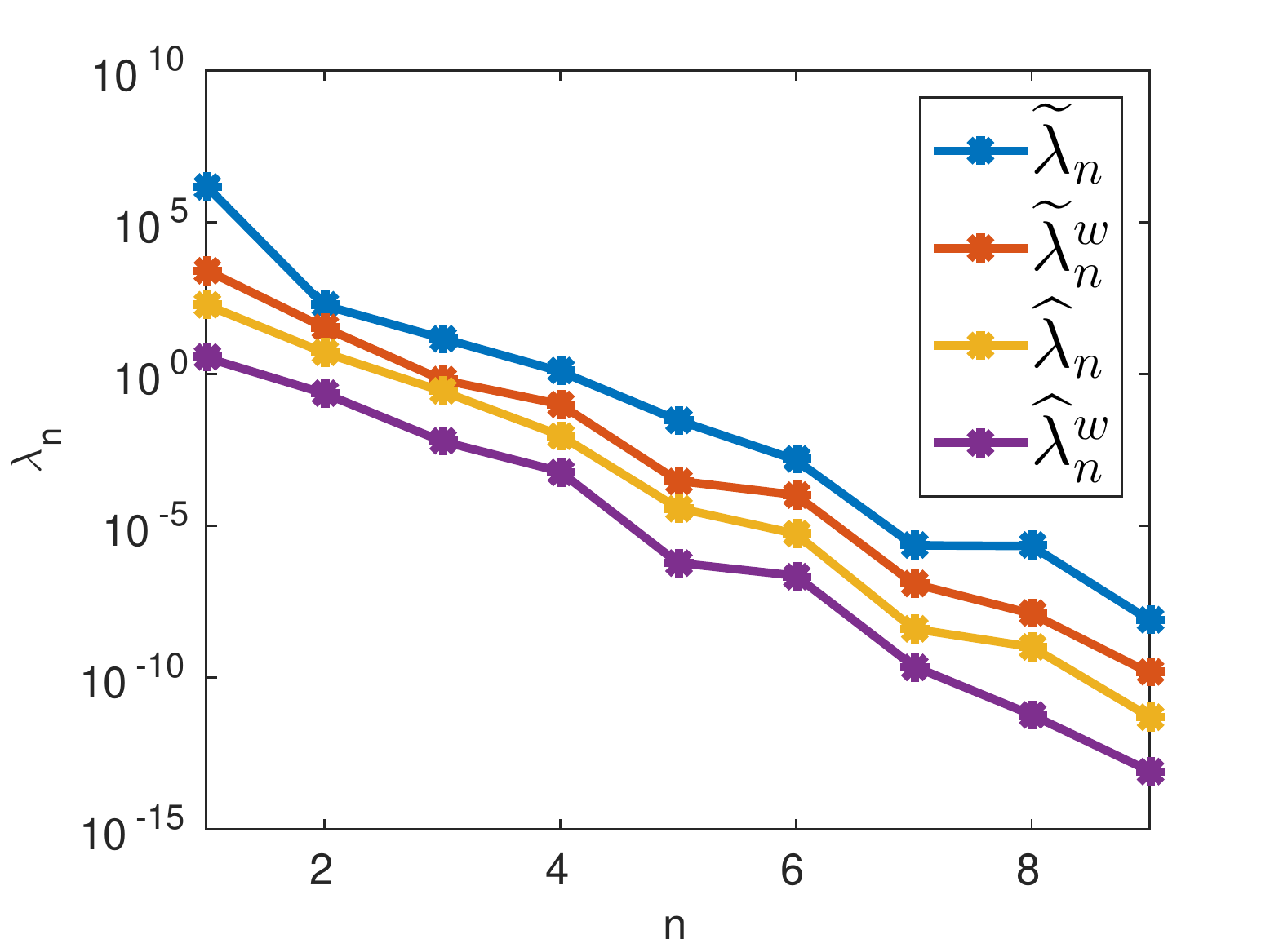}
}
\subfigure[case (b), $\al=0.7$]{
\includegraphics[trim = .1cm .1cm .1cm .1cm, clip=true,width=4.5cm]{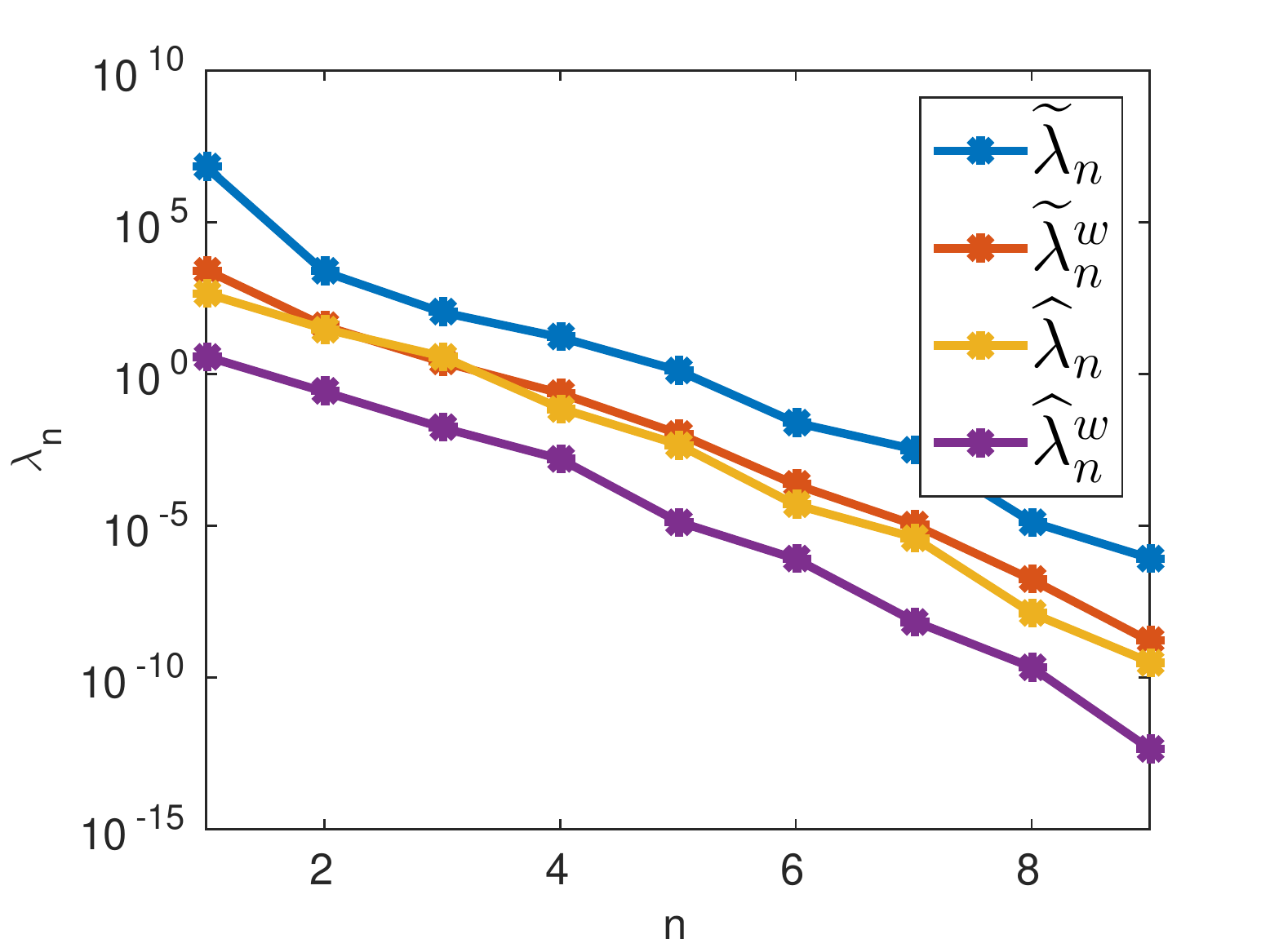}
}
 \caption{The decay of eigenvalues of the correlation matrix in the 1D problem with $\al=0.3$, $0.5$ and $0.7$.
 Here, $\widetilde\lambda_n$, $\widetilde\lambda_n^w$, $\widehat\lambda_n$, and $\widehat\lambda_n^w$
 denote eigenvalues of correlation matrix for $H_0^1\II$ POD basis with or without FDQs
 and $L^2\II$ POD basis  with or without FDQs, respectively.}
\label{fig:1d}
\end{figure}

\begin{figure}[hbt!]
\centering

\subfigure[$\al=0.3$, $H_0^1\II$ basis]{
\includegraphics[trim = .1cm .1cm .1cm .1cm, clip=true,width=4.5cm]{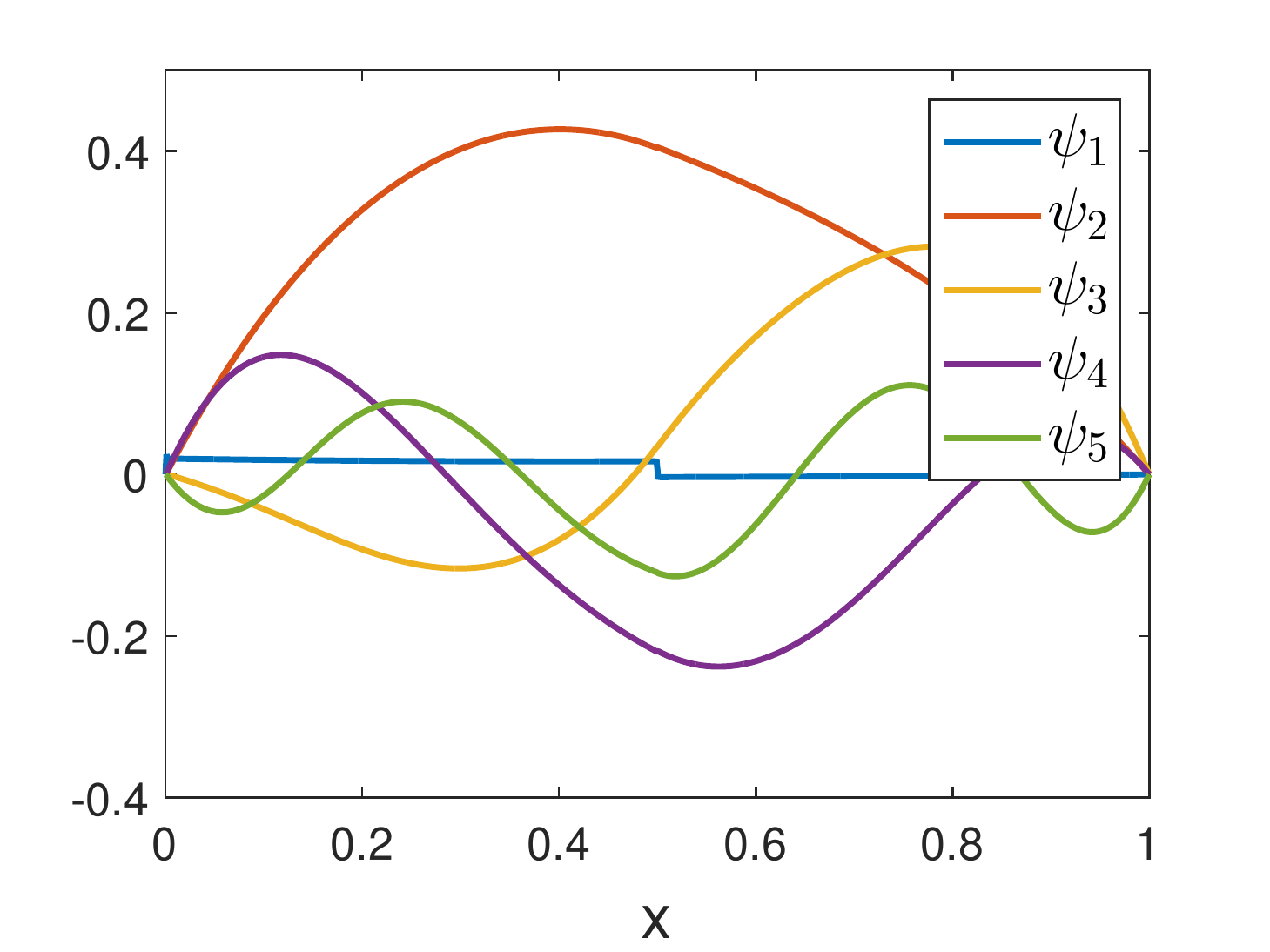}
}
\subfigure[$\al=0.5$, $H_0^1\II$ basis]{
\includegraphics[trim = .1cm .1cm .1cm .1cm, clip=true,width=4.5cm]{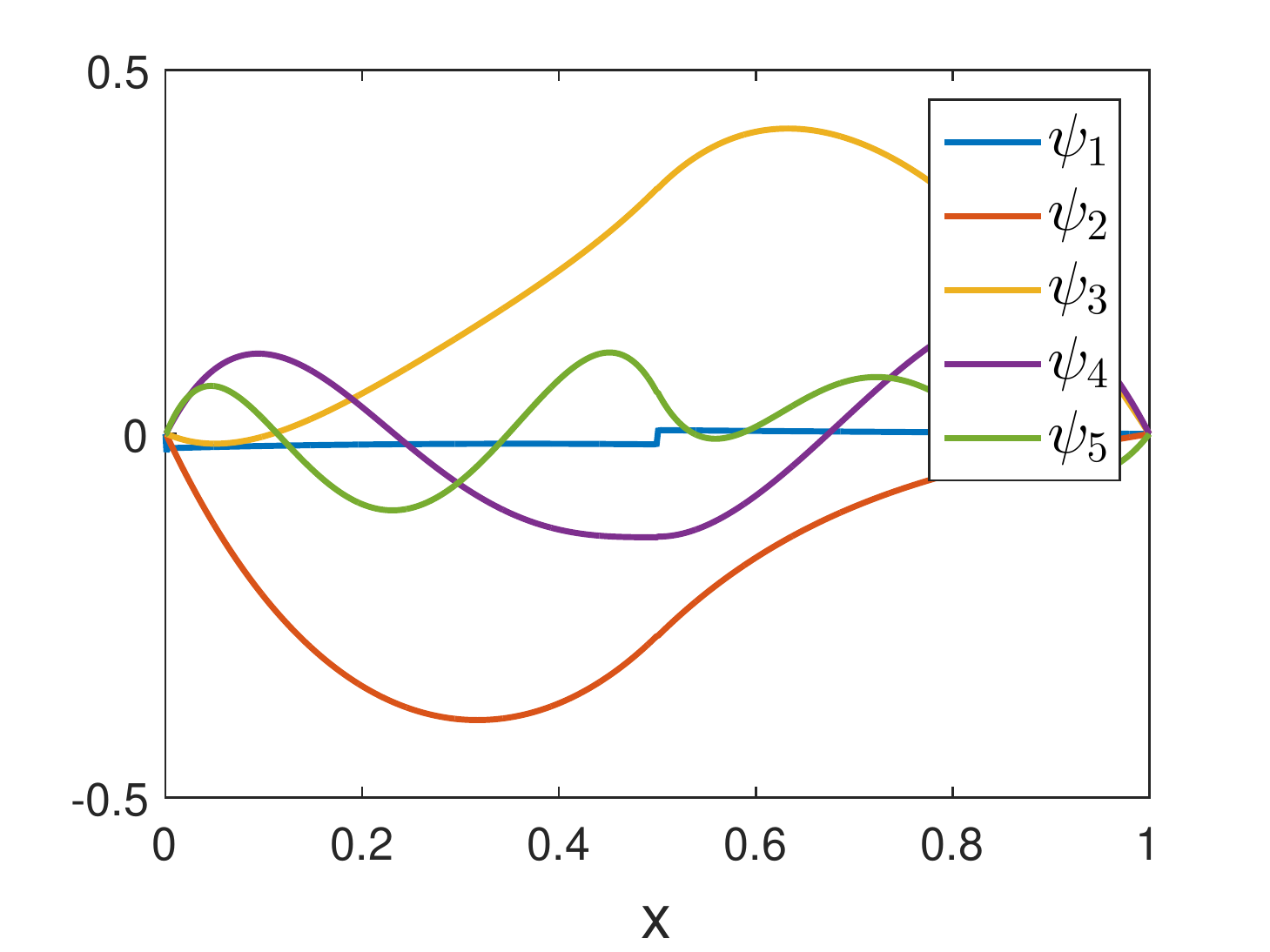}
}
\subfigure[$\al=0.7$, $H_0^1\II$ basis]{
\includegraphics[trim = .1cm .1cm .1cm .1cm, clip=true,width=4.5cm]{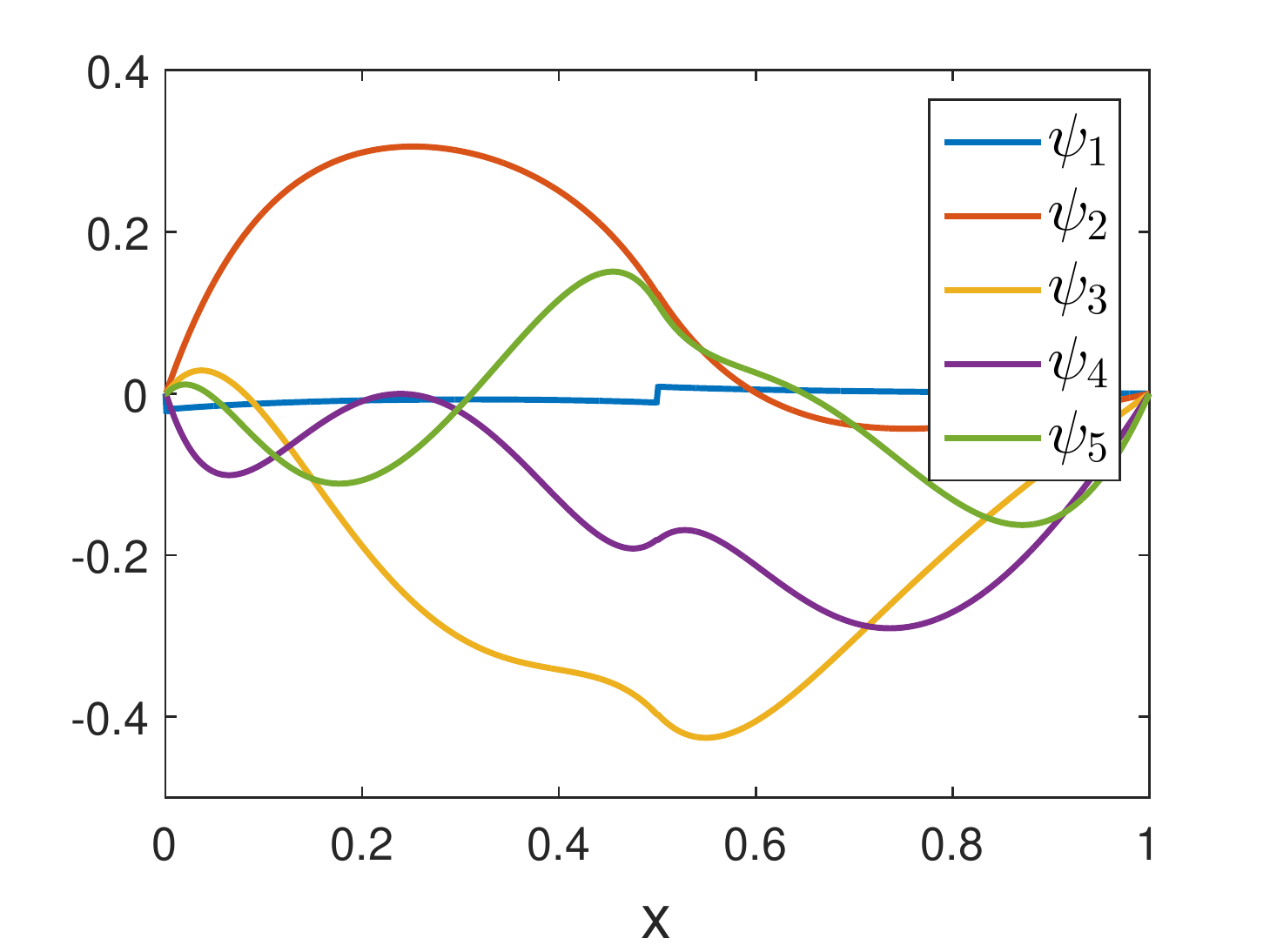}
}

\subfigure[$\al=0.3$, $L^2\II$ basis]{
\includegraphics[trim = .1cm .1cm .1cm .1cm, clip=true,width=4.5cm]{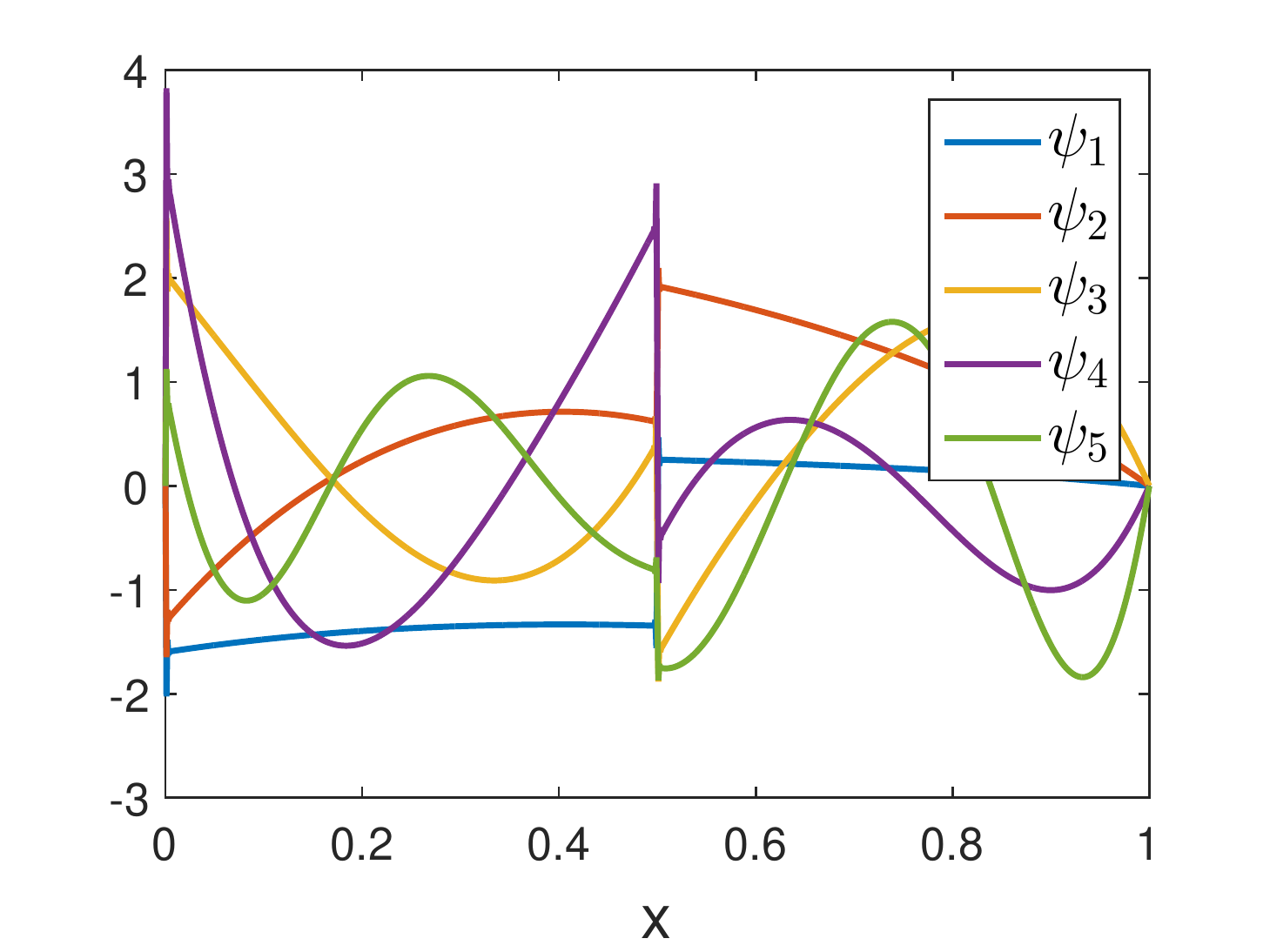}
}
\subfigure[$\al=0.5$, $L^2\II$ basis]{
\includegraphics[trim = .1cm .1cm .1cm .1cm, clip=true,width=4.5cm]{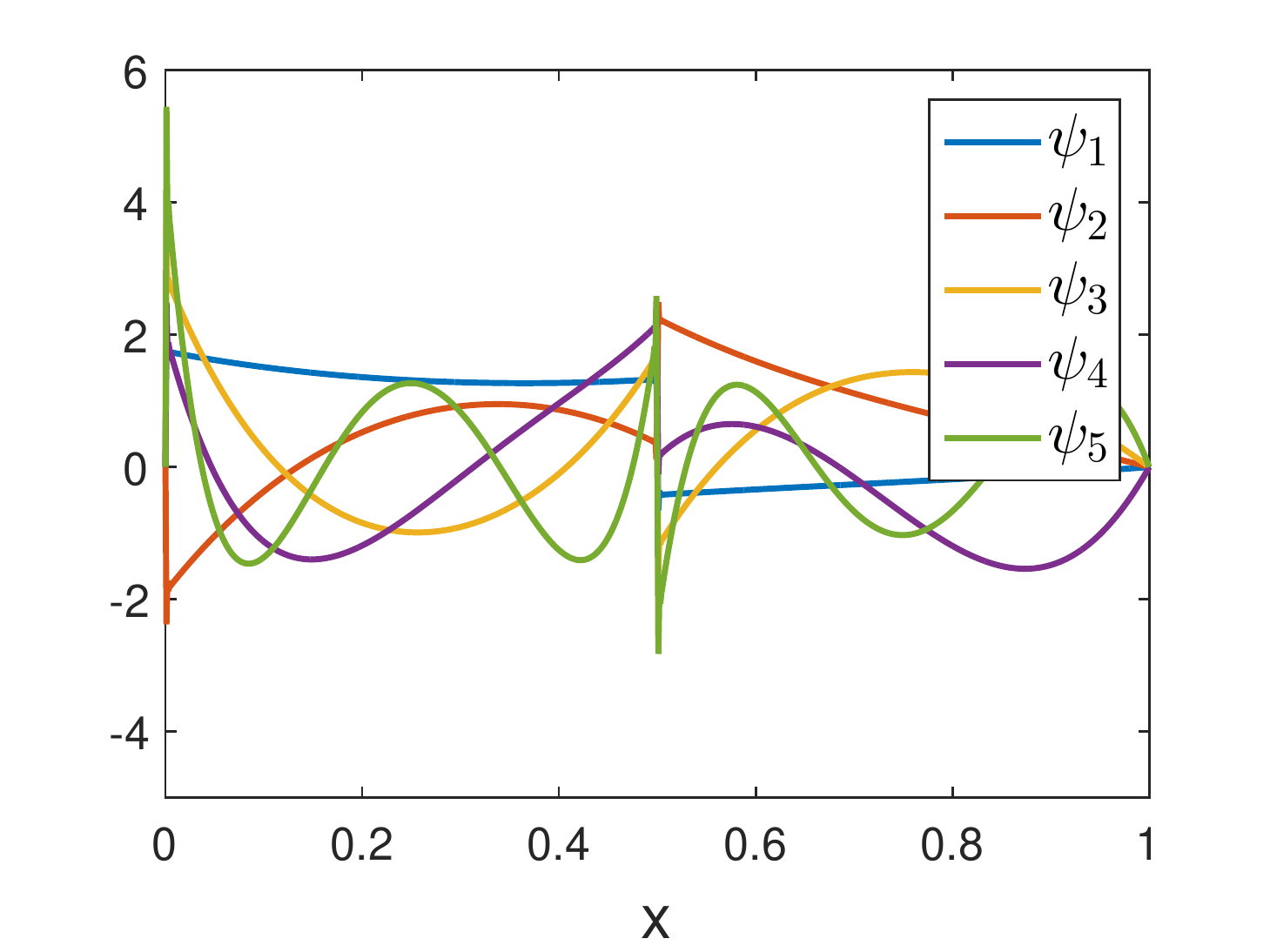}
}
\subfigure[$\al=0.7$, $L^2\II$ basis]{
\includegraphics[trim = .1cm .1cm .1cm .1cm, clip=true,width=4.5cm]{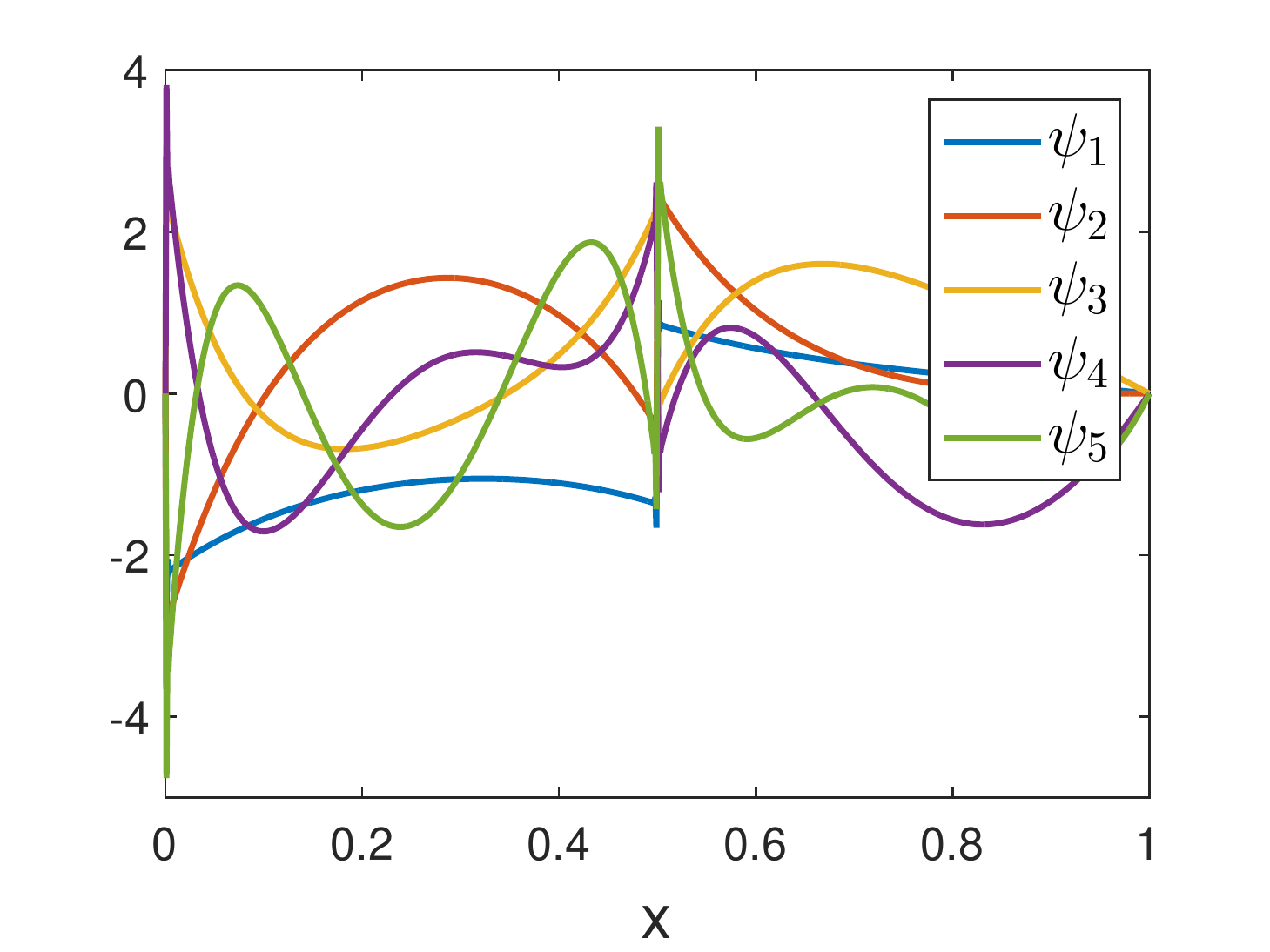}
}
 \caption{The first five POD basis functions, in the $H_0^1\II$ and $L^2\II$ norms for case (b), with FDQs included in the basis construction.}
\label{fig:podbasis}
\end{figure}

\subsection{Numerical results for one two-dimensional example}
Now we present numerical results for the following two-dimensional example:
\begin{itemize}
  \item[(c)] $\Omega=(-1,1)^2\setminus ([0,1]\times[-1,0])$, $v(x_1,x_2)=x_1(1+x_1)(1-x_1)\sin(2\pi x_2)$,
   $f(x_1,x_2,t)=e^{t\cos(2\pi x_1)\sin(\pi x_2)}\in W^{2,\infty}(0,T;L^2\II)$, and $T=1$.
\end{itemize}
In the computations, we divide the L-shaped domain $\Omega$ into a triangulation with a degree of freedom $10^{4}$,
and fix the time step size $\tau$ at $\tau=T/200$. A reentrant corner with an angle $\omega\in(\pi,2\pi)$
induces a singularity associated with the corresponding stationary Poisson's problem \cite{Grisvard:1985}.
In example (c), the angle $\omega=3\pi/2$, and the reentrant corner gives rise to a singularity near the origin
with a leading term of the form $ r^{2/3}\sin(2\theta/3)$ in polar coordinates.
Hence, we refine the mesh adaptively using the bisection rule \cite[Section 4.1]{Nochetto:2009}.
We compute the reference solution on a more refined mesh with $ 2\times10^4$ and $\tau=1/1000$.

The numerical results are shown in Table \ref{tab:POD-2D_lshape}. The POD scheme exhibits a fast
convergence, and the error decreases steadily with the increase of the number $m$ of POD basis
functions. In particular, five or six POD basis functions suffice to resolve the solution
manifold to an accuracy $O(10^{-9})$, which clearly shows the efficiency of the Galerkin POD scheme, when
compared with the standard Galerkin FEM. The fast convergence follows also from the exponential
decay of the eigenvalues of the correlation matrix, cf. Fig. \ref{fig:POD-2D_lshape}.
 The decay rate of the spectrum is almost identical for the $L^2\II$ and
$H^1\II$ POD basis, and independent of the presence of the FDQs. Hence,
the presence of geometrical singularities in the domain does not influence the efficiency of the
Galerkin-L1-POD scheme. Interestingly, we observe that with the increase of the fractional order
$\alpha$, the error increases slightly, which awaits further theoretical justification.

\begin{table}[hbt!]
\caption{The numerical results of the Galerkin POD for case (c), with $T=1$, $N=200$
and with $m$ POD basis functions.}\label{tab:POD-2D_lshape}
\vspace{-.2cm}
\begin{center}
\begin{tabular}{|c|c|c|c|c|c|c|}
\hline
   $\al$  & $m$ & $e$ & ${\widetilde e}^m$ & ${\widetilde e}^m_w$ &$\widehat e^m$ &$\widehat e^m_w$ \\ \hline
 0.3 &5  &7.67e-7 & 5.36e-10 & 3.33e-10 &5.17e-10  &3.32e-10  \\ 
 & 6  &7.67e-7 & 6.40e-12 & 5.49e-12 &6.39e-12  &5.48e-12 \\  \hline
 0.5  &5  &4.75e-6& 2.08e-8 & 8.23e-9 &1.96e-8  &8.18e-9  \\  
 & 6  &4.75e-6 & 1.62e-10 & 4.82e-11 &1.44e-10  &4.79e-11  \\  \hline
 0.7  &6  &1.01e-5 &2.05e-8 & 1.36e-9 &1.38e-8  &1.27e-9  \\ 
 & 7  &1.01e-5 & 9.11e-10 & 1.17e-10 &6.09e-10  &1.11e-10  \\  \hline
\end{tabular}
\end{center}
\end{table}

\begin{figure}[hbt!]
\centering

\subfigure[$\al=0.3$]{
\includegraphics[trim = .1cm .1cm .1cm .1cm, clip=true,width=4.5cm]{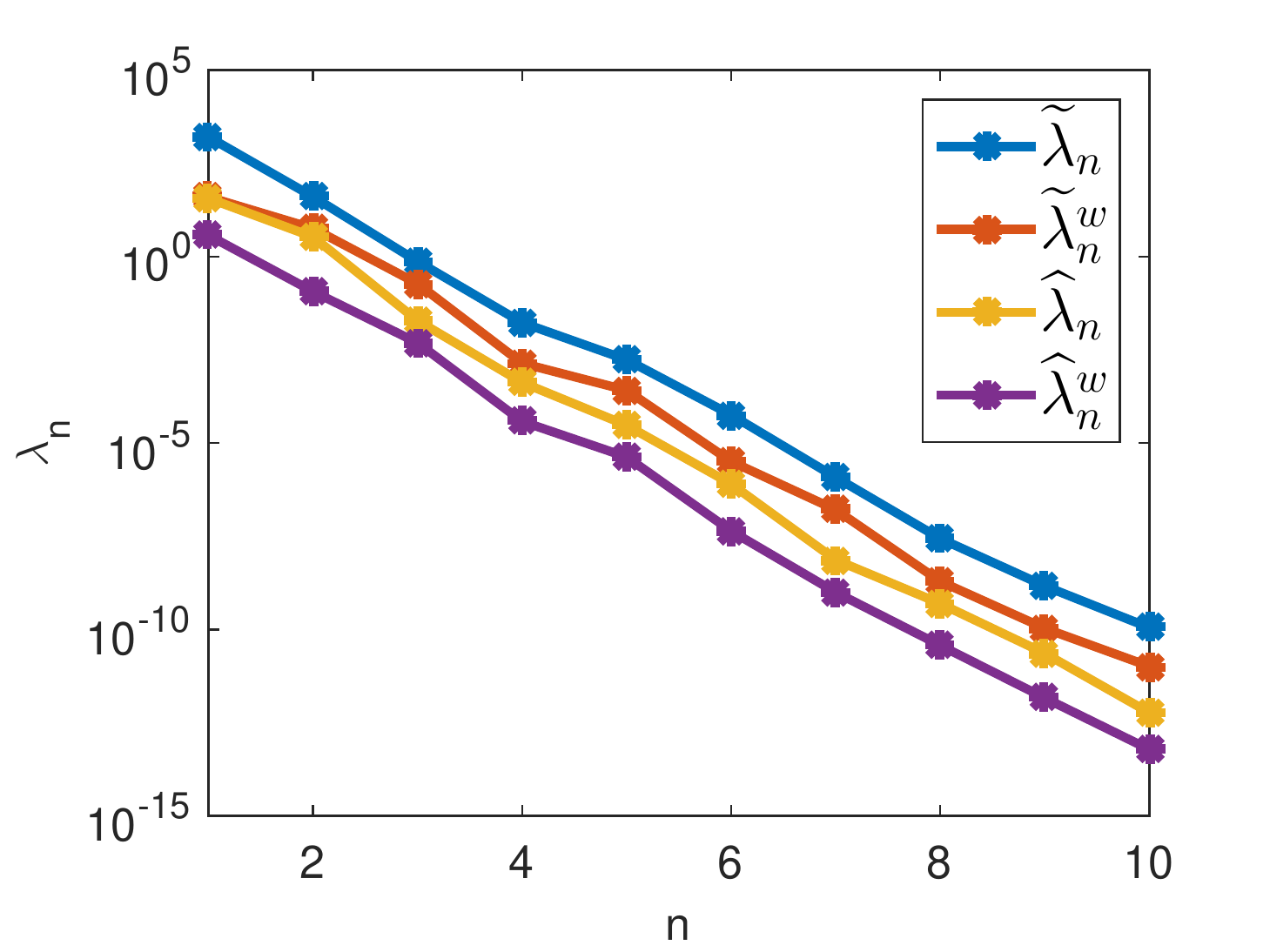}
}
\subfigure[$\al=0.5$]{
\includegraphics[trim = .1cm .1cm .1cm .1cm, clip=true,width=4.5cm]{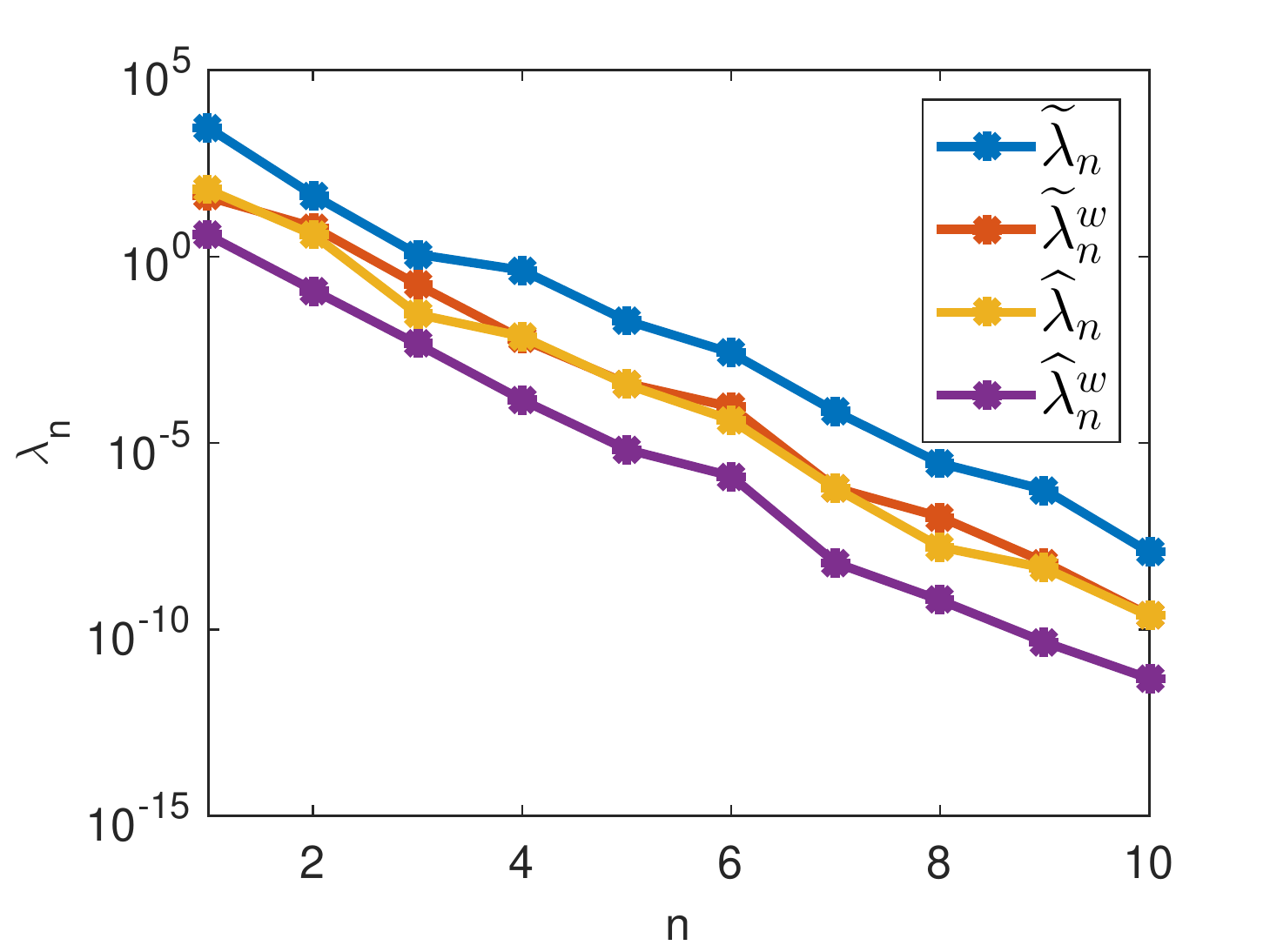}
}
\subfigure[$\al=0.7$]{
\includegraphics[trim = .1cm .1cm .1cm .1cm, clip=true,width=4.5cm]{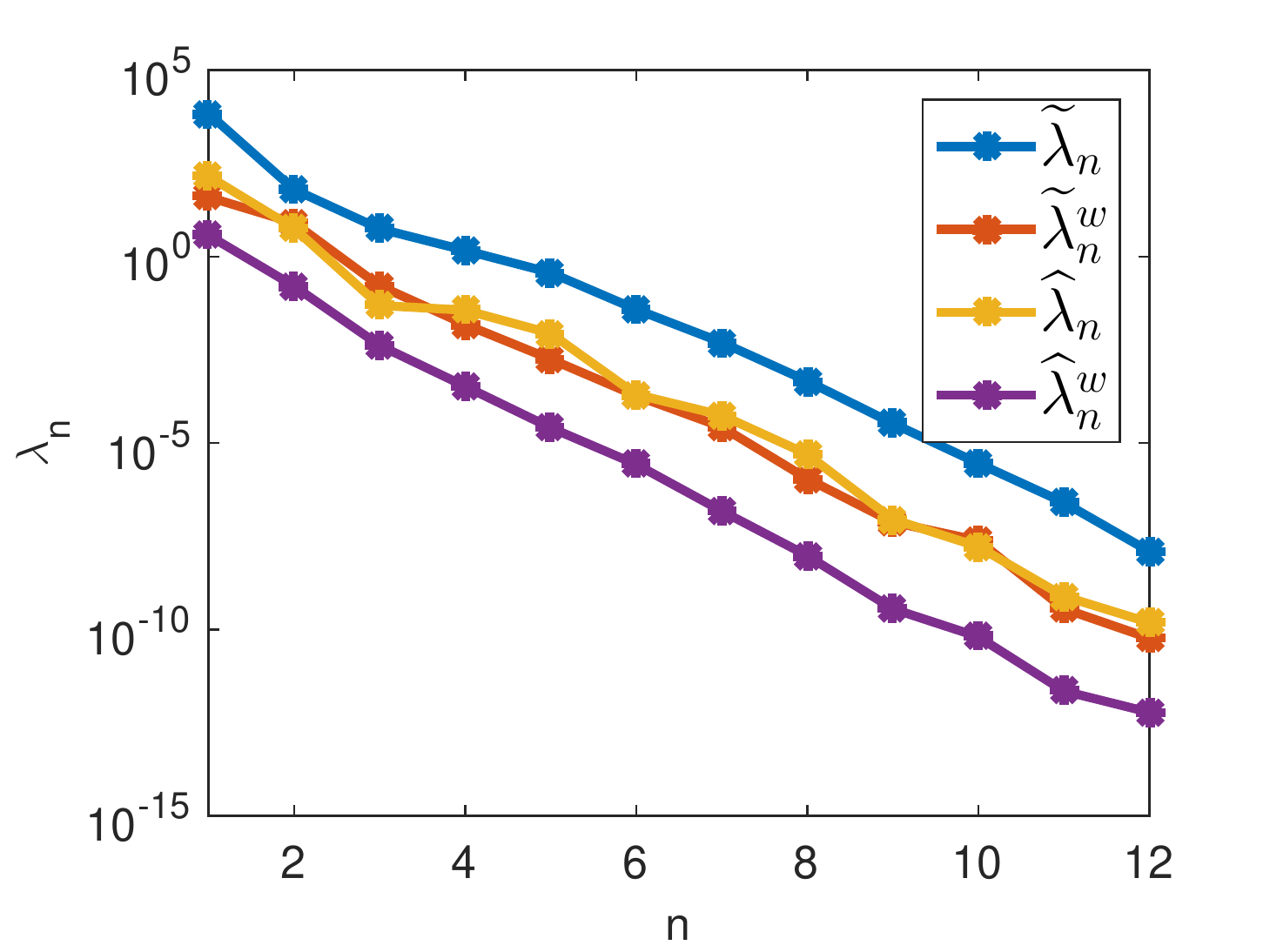}
}
 \caption{The decay of the eigenvalues of the correlation matrix for case (c) (2D problem on an L-shaped domain), with $\al=0.3$, $0.5$ and $0.7$.
 Here $\widetilde\lambda_n$, $\widetilde\lambda_n^w$, $\widehat\lambda_n$, and $\widehat\lambda_n^w$
 denote eigenvalues of correlation matrix for $H_0^1\II$ POD basis with or without FDQs
 and $L^2\II$ POD basis with or without FDQs,
 respectively.}\label{fig:POD-2D_lshape}
\end{figure}

\subsection{Numerical results for a perturbed problem}
Last, we illustrate the proposed Galerkin POD scheme with a perturbed problem, where the snapshots are
generated using a problem setting different from the one of interest, as typically occurs in optimal
control and inverse problems. Let $\delta_n(x)=n(2\cosh^2(nx))^{-1}$ be an approximate
Dirac delta function.
\begin{itemize}
  \item[(d)] On the domain $\Omega$ is $\Omega=(0,1)^2$, we consider the following problem:
  \begin{equation*}
    \Dal u -\Delta u + qu = f\quad \text{in }~\Omega
\end{equation*}
with $q(x_1,x_2)=1+\cos(\pi x_1) \sin(2 \pi x_2)$, $f(x_1,x_2,t)=\delta_2(x_1-\frac12)\delta_2(x_2-\frac12)e^{\cos(t)}$
 and $v(x_1,x_2)=x_1(1-x_1)\sin(2\pi x_2)$
and $T=1$. However, the snapshots are generated using a
perturbed source term $\tilde f(x_1,x_2,t)=\delta_{10}(x_1-\frac12)\delta_{10}(x_2-\frac12)$.
\end{itemize}
In our computation, we divide the sides of the domain $\Om$ into $100$ equal subintervals, each of
length $10^{-2}$, thus dividing $\Omega$ into $10^4$ small squares, and obtain a uniform
triangulation by connecting parallel diagonals of each small square. The time step size $\tau$ is fixed as $\tau=T/200$.

\begin{table}[hbt!]
\caption{The numerical results of the Galerkin POD for case (d),  with $T=1$, $N=200$
and with $m$ POD basis functions.}\label{tab:POD-d}
\vspace{-.2cm}
\begin{center}
\begin{tabular}{|c|c|c|c|c|c|}
\hline
   $\al$  & $m$ & ${\widetilde e}^m$ & ${\widetilde e}^m_w$ &$\widehat e^m$ &$\widehat e^m_w$ \\ \hline
 0.3 &4  & 4.63e-7 & 4.64e-7 &4.63e-7  &4.64e-7  \\
 & 5  & 3.32e-7 & 4.50e-7 &3.21e-7  &3.34e-7 \\  \hline
 0.5  &4  &4.47e-7 & 4.52e-7 &4.47e-7  &4.53e-7  \\
 & 5   &3.50e-7 & 3.46e-7 &3.50e-8  &3.45e-7 \\  \hline
 0.4  &4  &4.12e-7 & 4.32e-7&4.12e-7  &4.32e-7  \\
 & 5  & 3.81e-7 & 3.71e-7 &3.80e-7  &3.71e-7  \\  \hline
\end{tabular}
\end{center}
\end{table}

Since the snapshots are generated from a perturbed problem, the error estimates in Theorem \ref{thm:err-pod}
do not apply directly. Nonetheless, one can still observe a fast decay of the POD approximation error, and with
four to five POD basis functions, the error is already much smaller than the L1 time stepping, cf. Table
\ref{tab:POD-d}, for both $L^2(\Omega)$- and $H_0^1(\Omega)$-POD basis and with/without FDQs. The high
efficiency of the proposed scheme is attributed to the intrinsic low-dimensionality of the solution manifold, which is
fully captured by the snapshots generated from the perturbed problem. This is also
expected from the fast decay of the eigenvalues of the correlation matrix (from the perturbed problem) in Fig.
\ref{fig:POD-2D_sq}. The solution profiles and corresponding errors are shown in Fig. \ref{fig:POD-2D_sq_sol}.
This example  shows clearly the potential of the proposed approach for solving related inverse problems
and optimal control, where many analogous forward problems have to be solved.

\begin{figure}[hbt!]
\centering

\subfigure[$\al=0.3$]{
\includegraphics[trim = .1cm .1cm .1cm .1cm, clip=true,width=4.5cm]{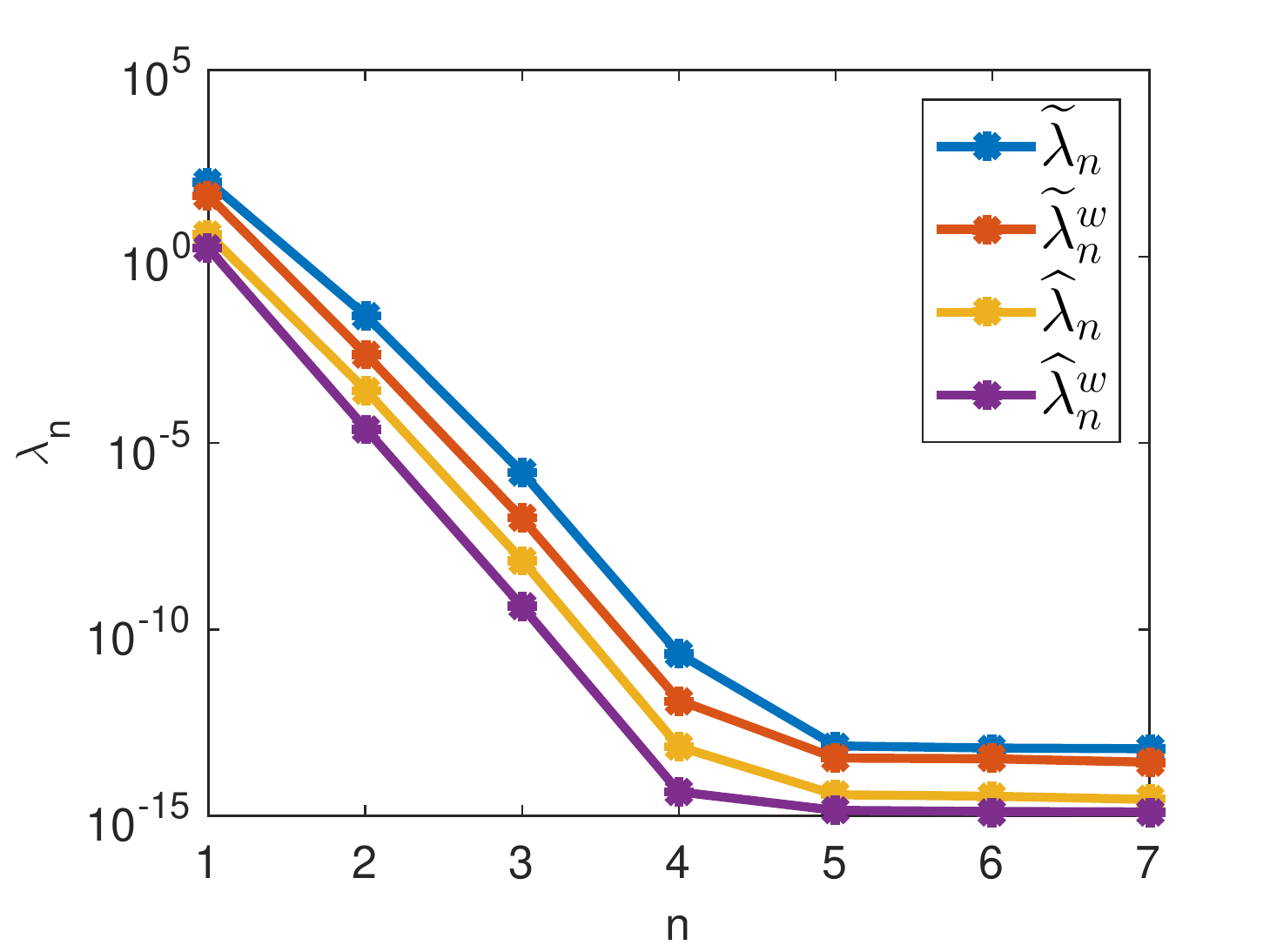}
}
\subfigure[$\al=0.5$]{
\includegraphics[trim = .1cm .1cm .1cm .1cm, clip=true,width=4.5cm]{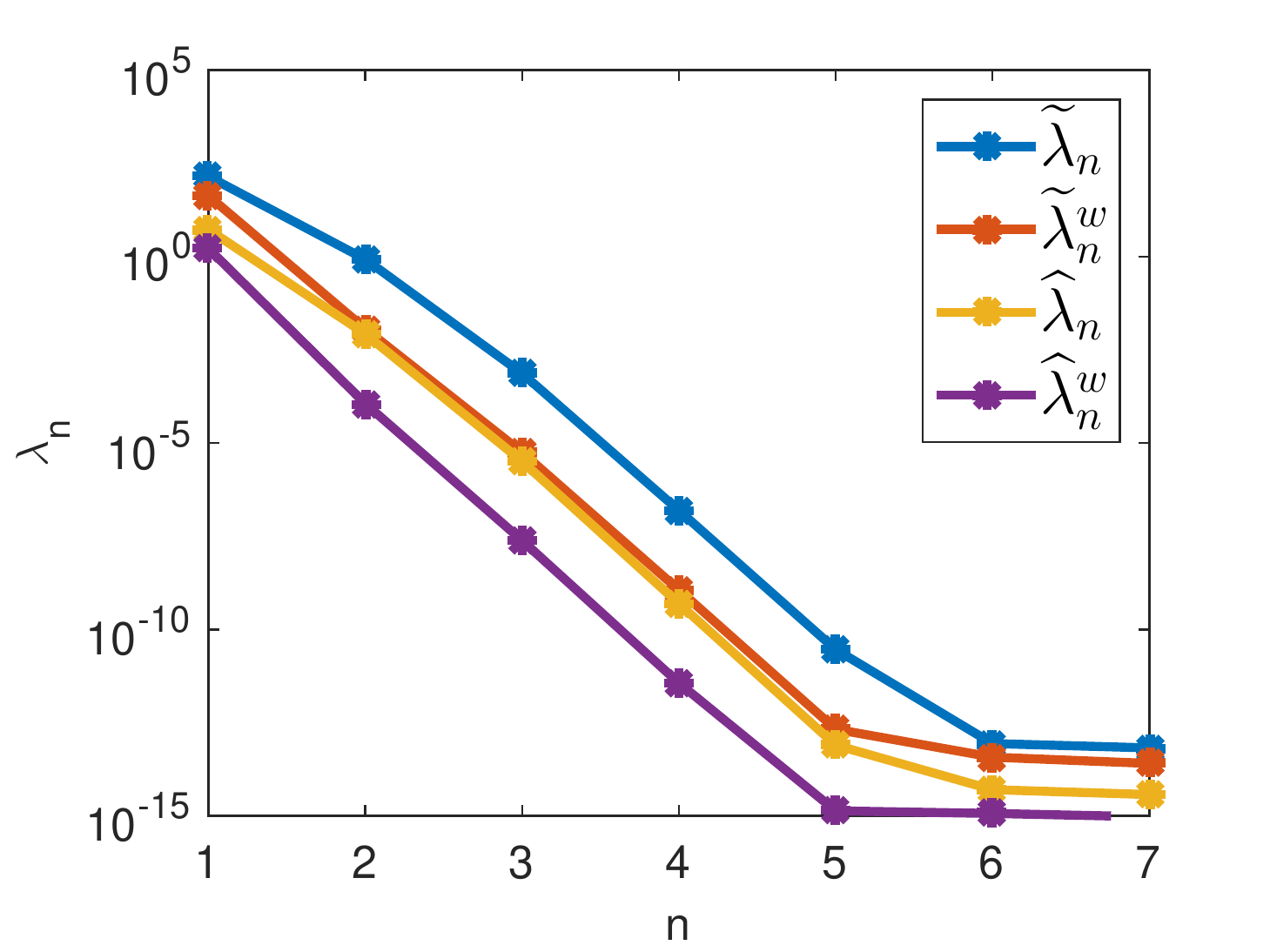}
}
\subfigure[$\al=0.7$]{
\includegraphics[trim = .1cm .1cm .1cm .1cm, clip=true,width=4.5cm]{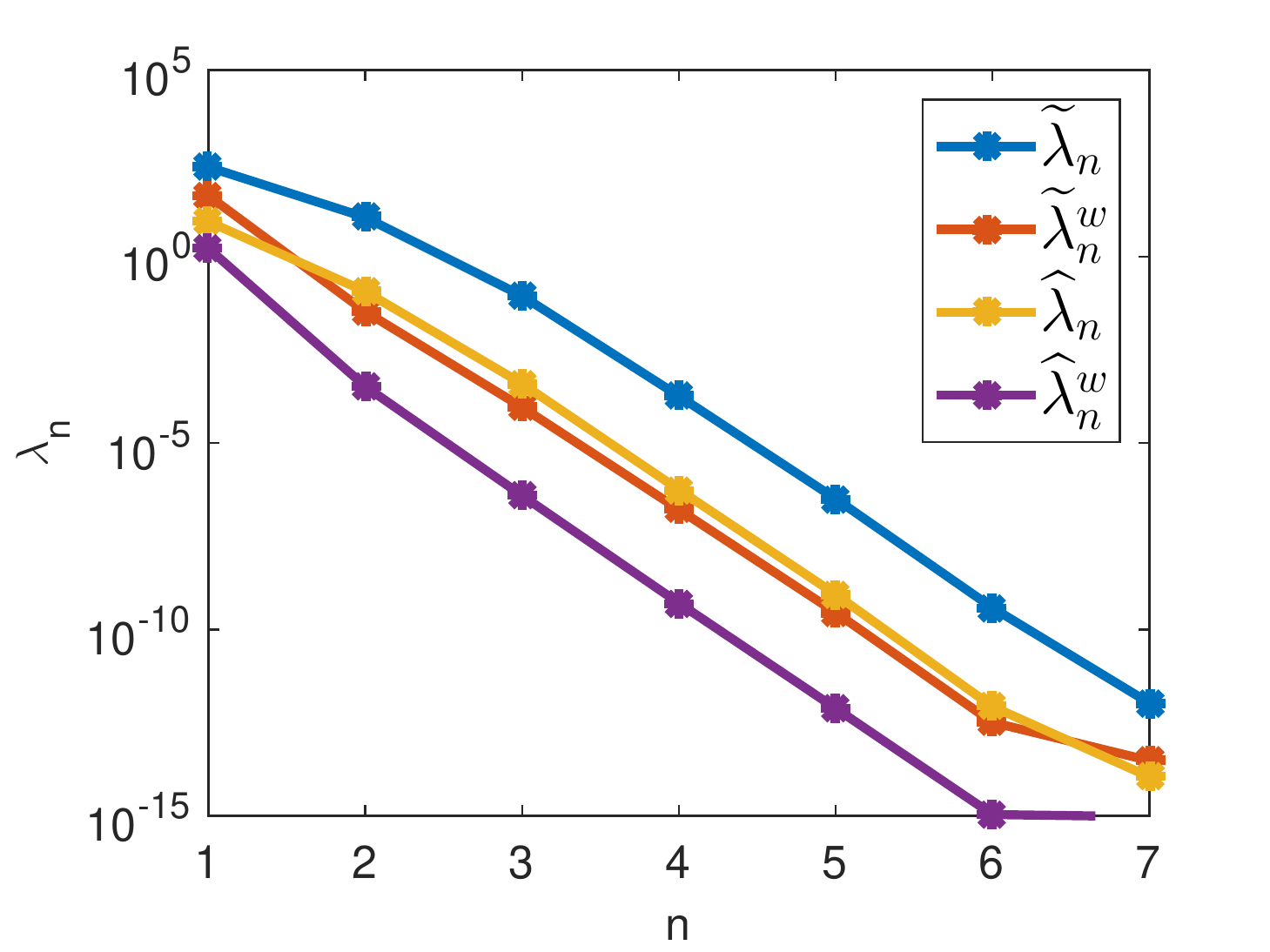}
}
 \caption{The decay of the eigenvalues of the correlation matrix for case (d) with $\al=0.3$, $0.5$ and $0.7$.
 Here $\widetilde\lambda_n$, $\widetilde\lambda_n^w$, $\widehat\lambda_n$, and $\widehat\lambda_n^w$
 denote eigenvalues of correlation matrix for $H_0^1\II$ POD basis with or without FDQs
 and $L^2\II$ POD basis with or without FDQs,
 respectively.}\label{fig:POD-2D_sq}
\end{figure}

\begin{figure}[hbt!]
\centering

\subfigure[exact solution, $\al=0.3$]{
\includegraphics[trim = .1cm .1cm .1cm .1cm, clip=true,width=4.5cm]{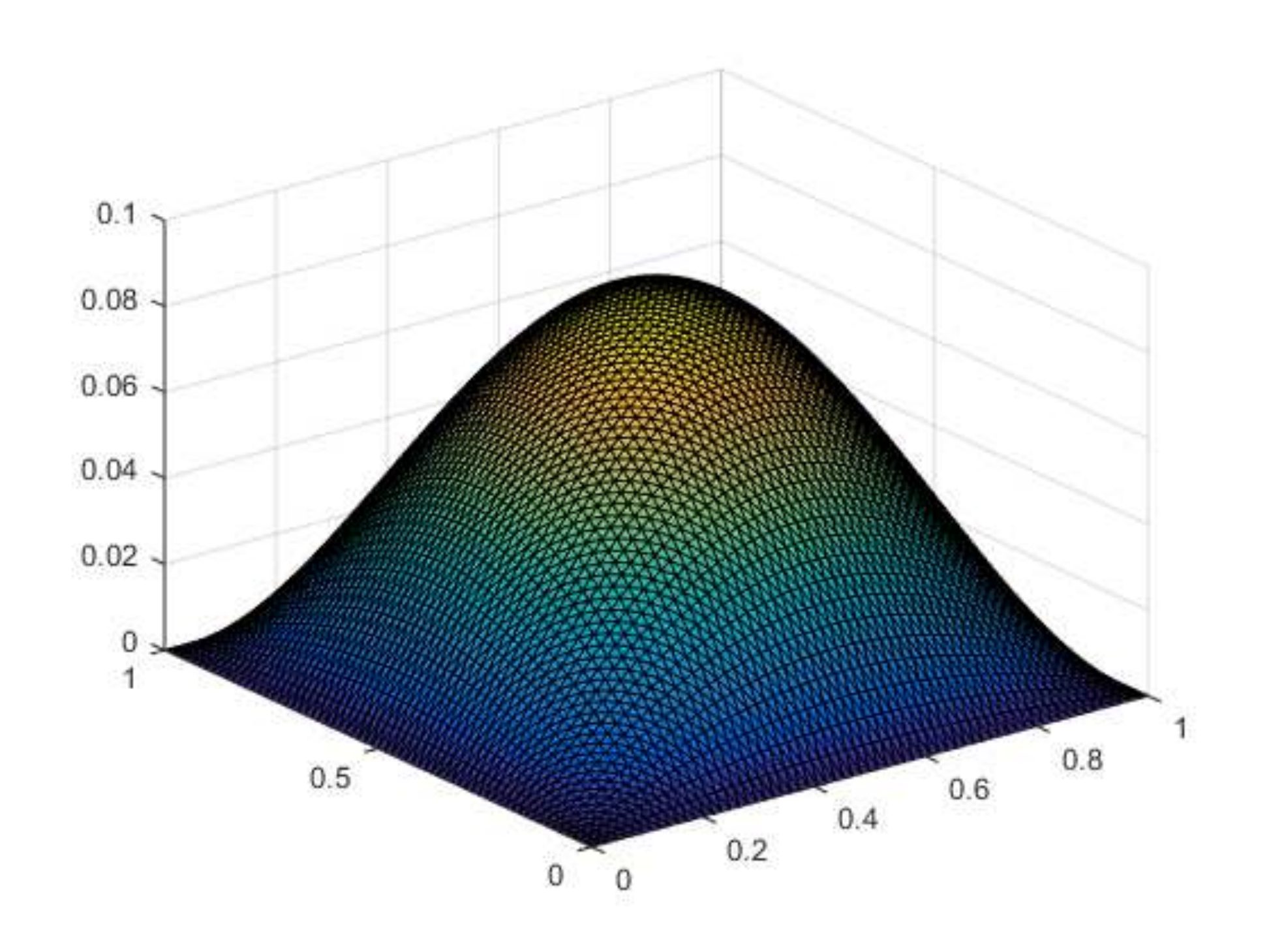}
}
\subfigure[POD solution, $\al=0.3$]{
\includegraphics[trim = .1cm .1cm .1cm .1cm, clip=true,width=4.5cm]{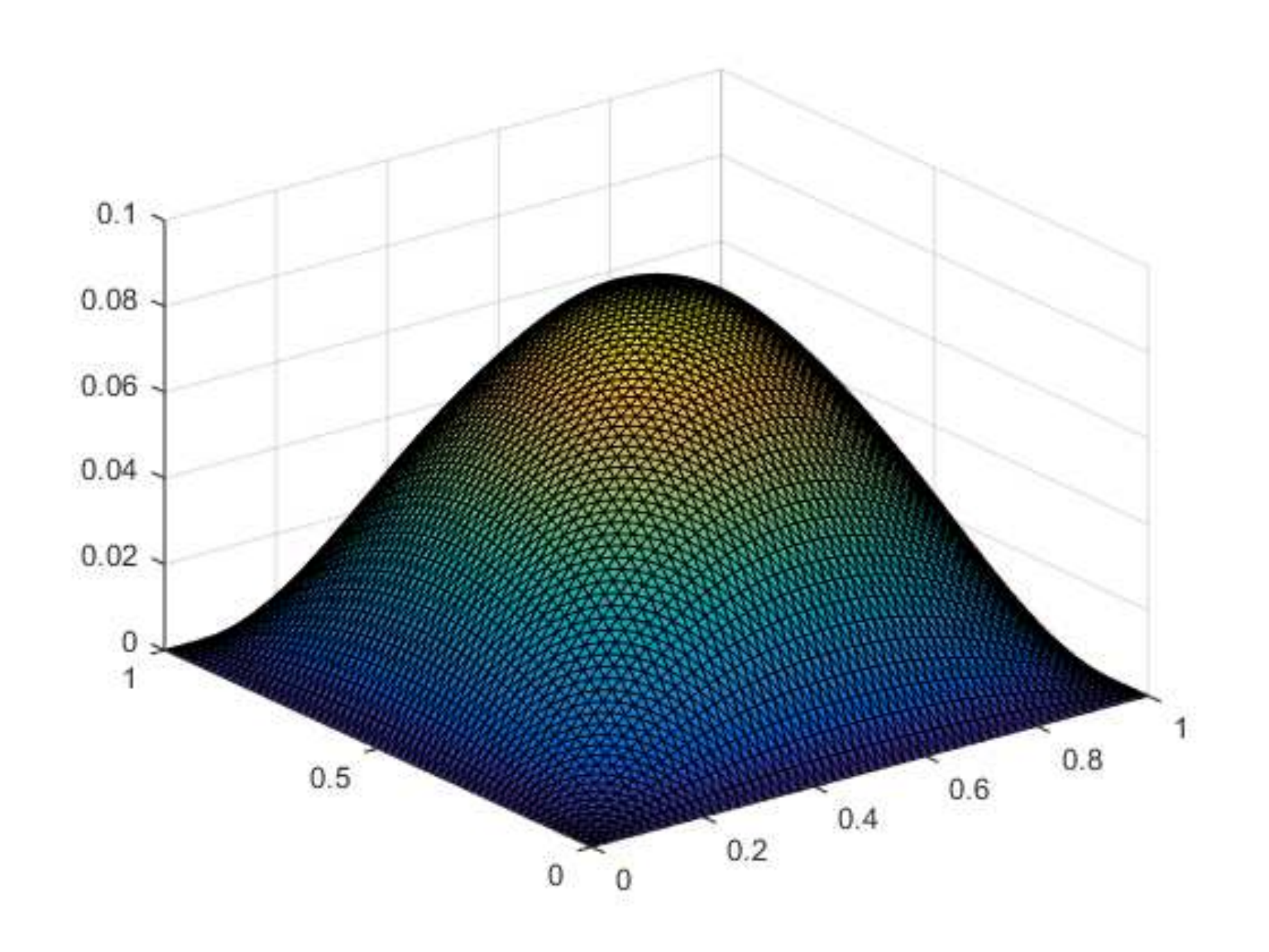}
}
\subfigure[error, $\al=0.3$]{
\includegraphics[trim = .1cm .1cm .1cm .1cm, clip=true,width=4.5cm]{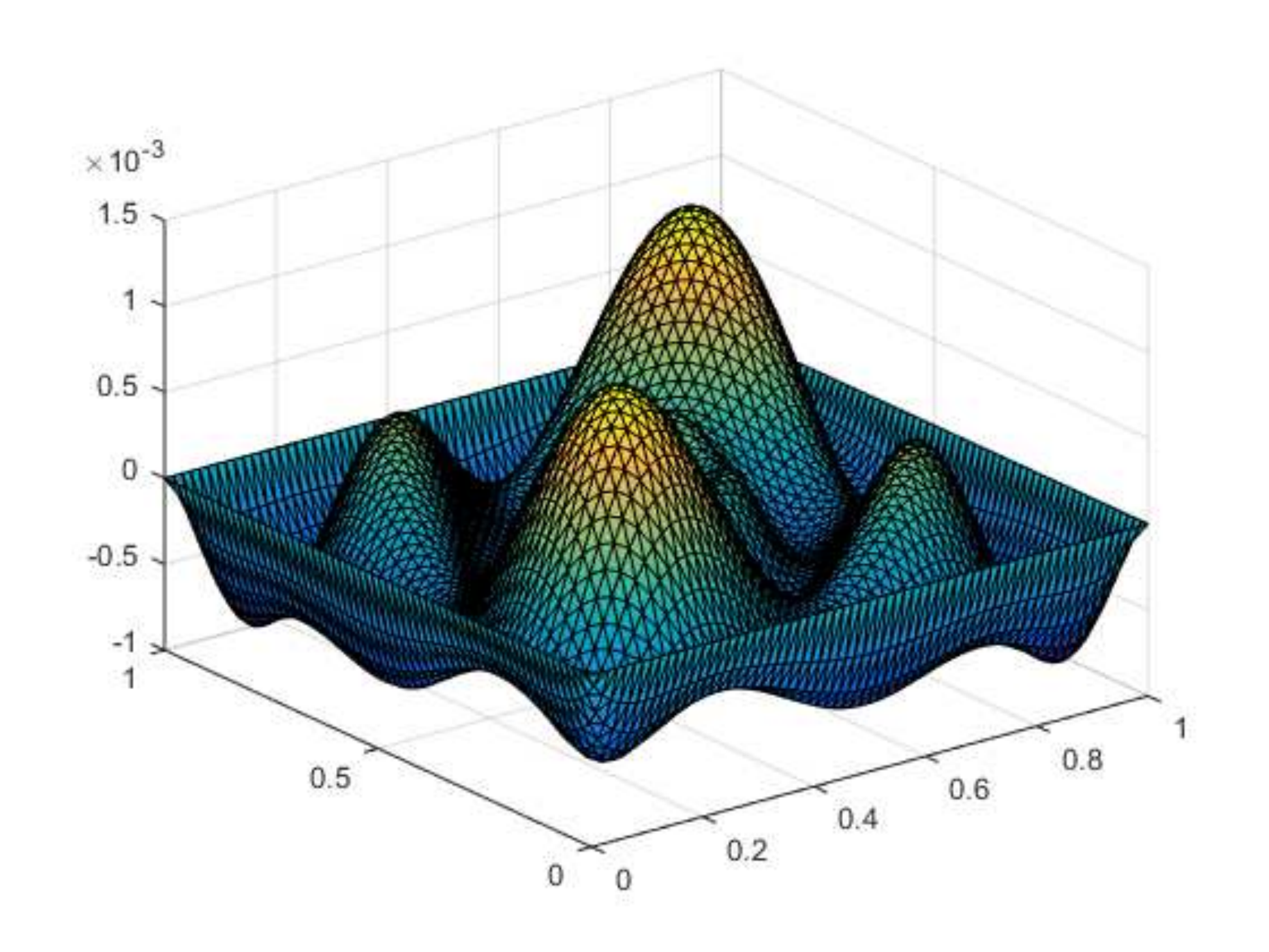}
}\\
\subfigure[exact solution, $\al=0.5$]{
\includegraphics[trim = .1cm .1cm .1cm .1cm, clip=true,width=4.5cm]{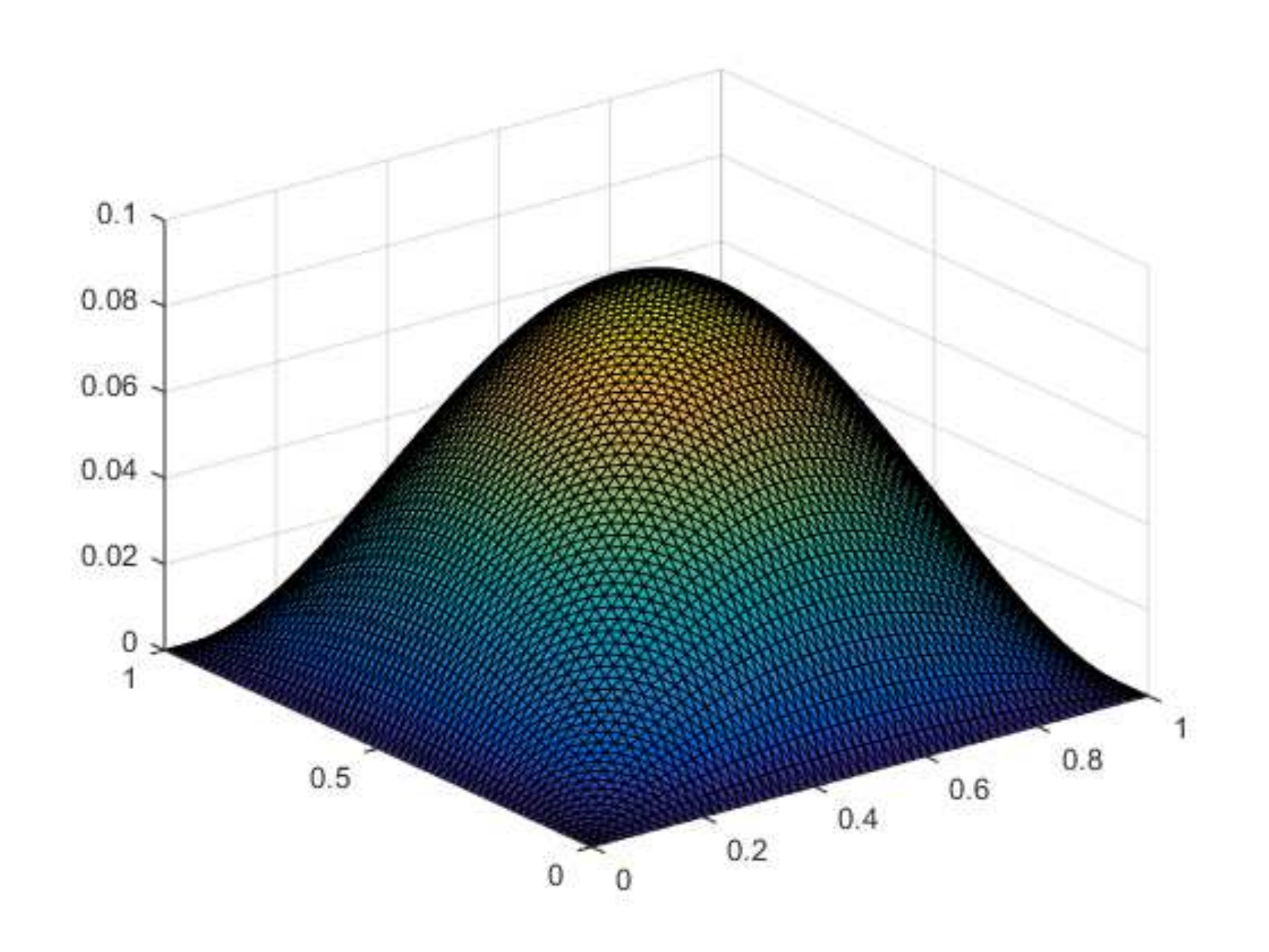}
}
\subfigure[POD solution, $\al=0.5$]{
\includegraphics[trim = .1cm .1cm .1cm .1cm, clip=true,width=4.5cm]{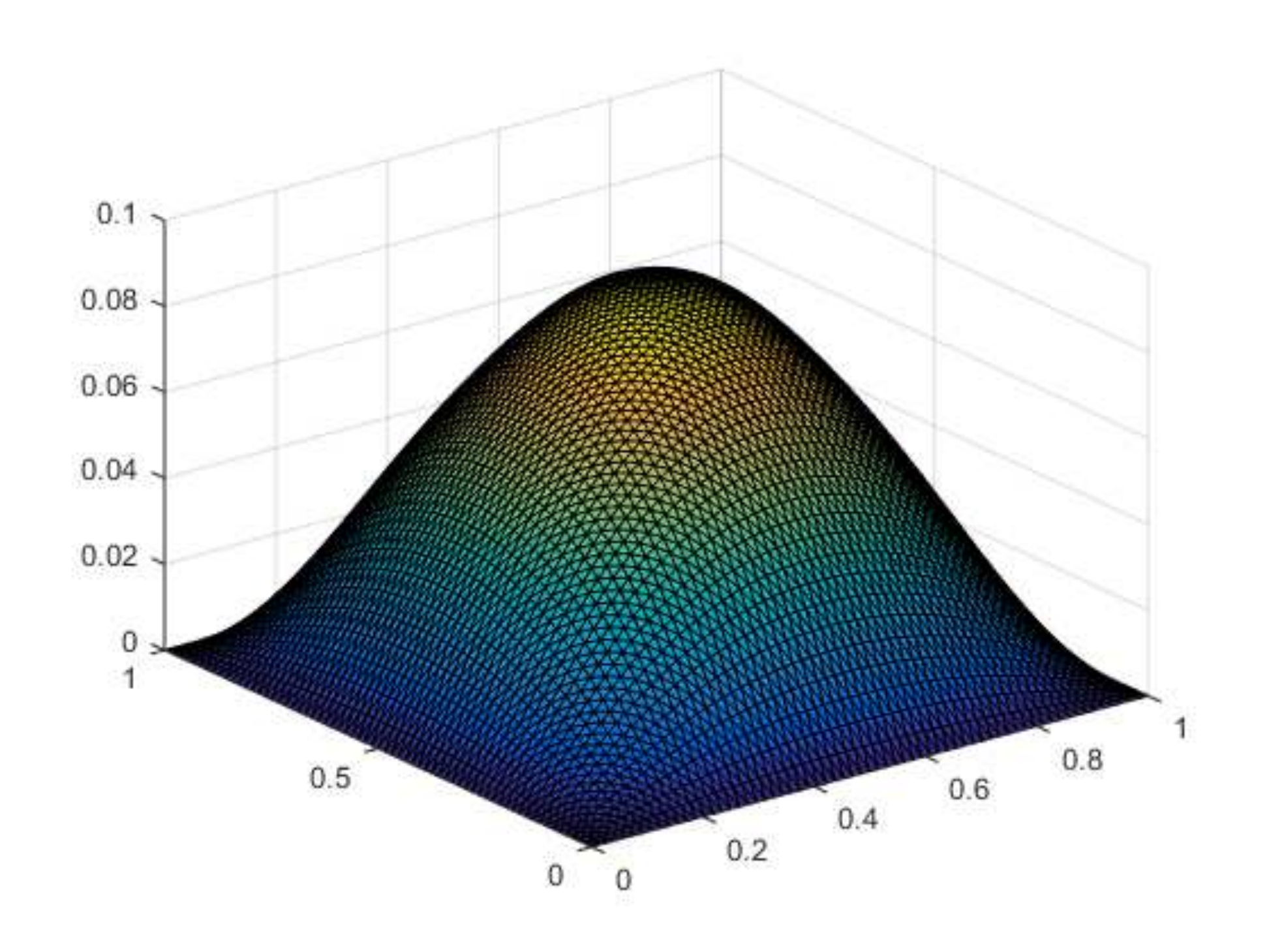}
}
\subfigure[error, $\al=0.5$]{
\includegraphics[trim = .1cm .1cm .1cm .1cm, clip=true,width=4.5cm]{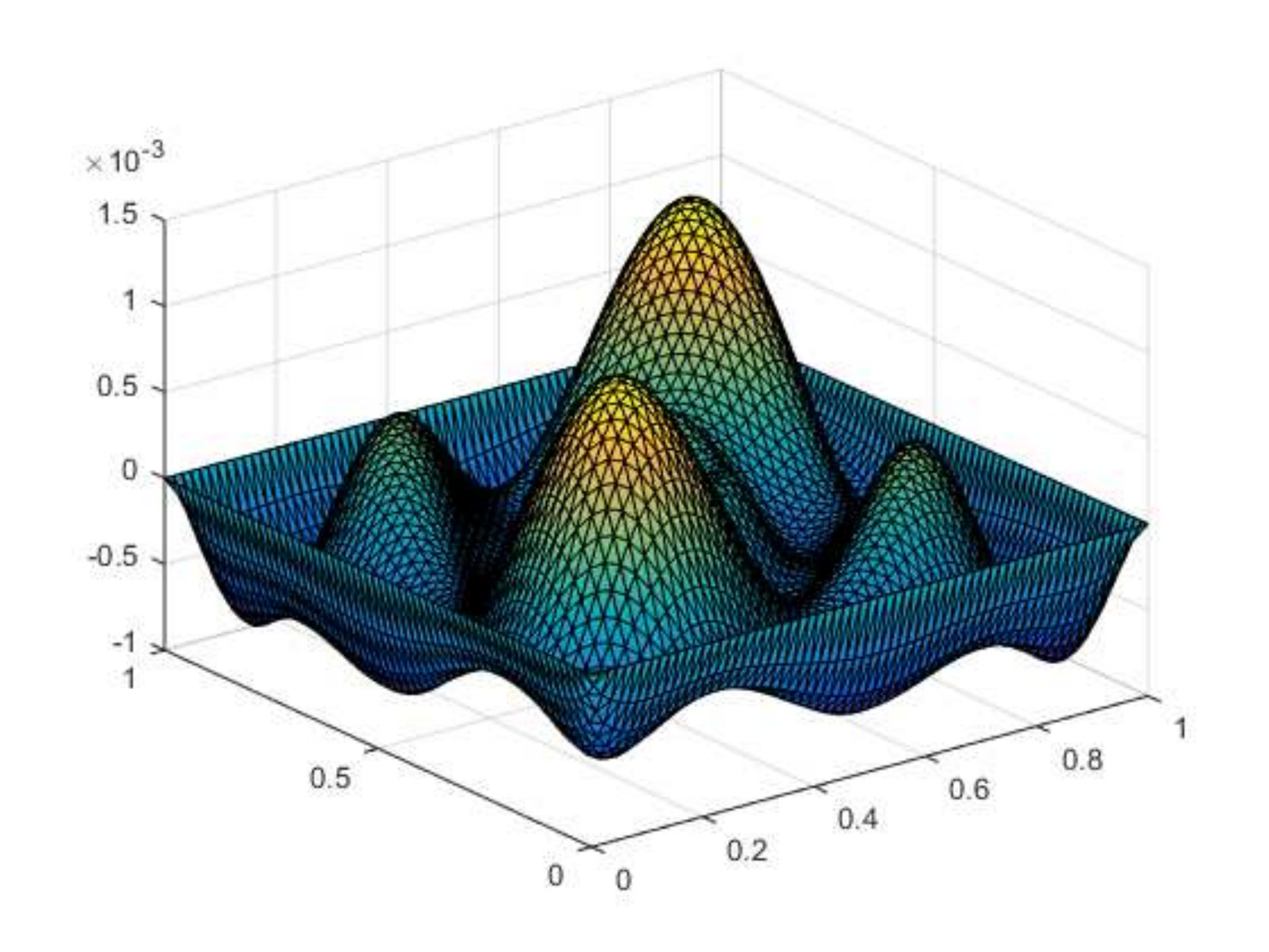}
}\\
\subfigure[exact solution, $\al=0.7$]{
\includegraphics[trim = .1cm .1cm .1cm .1cm, clip=true,width=4.5cm]{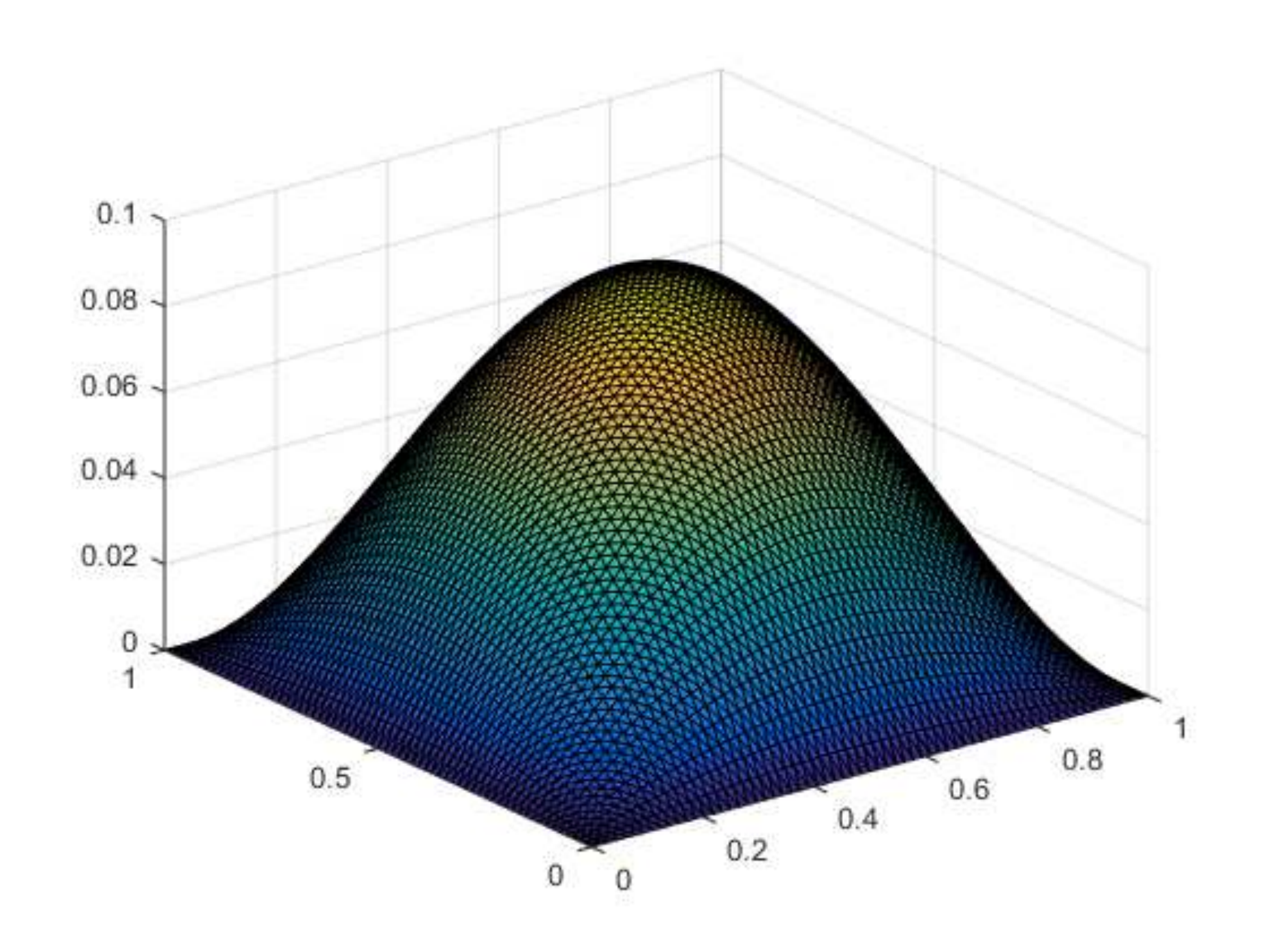}
}
\subfigure[POD solution, $\al=0.7$]{
\includegraphics[trim = .1cm .1cm .1cm .1cm, clip=true,width=4.5cm]{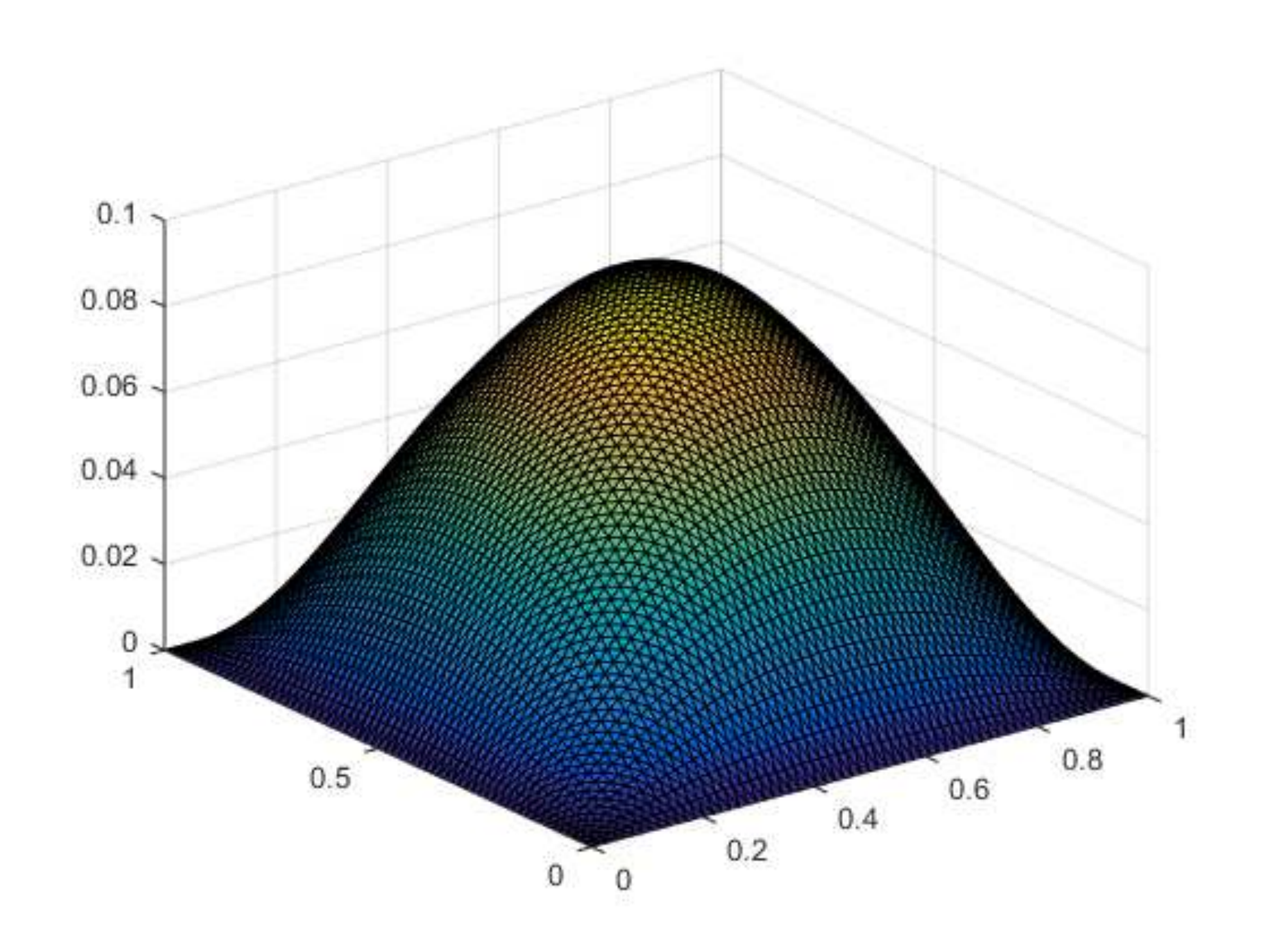}
}
\subfigure[error, $\al=0.7$]{
\includegraphics[trim = .1cm .1cm .1cm .1cm, clip=true,width=4.5cm]{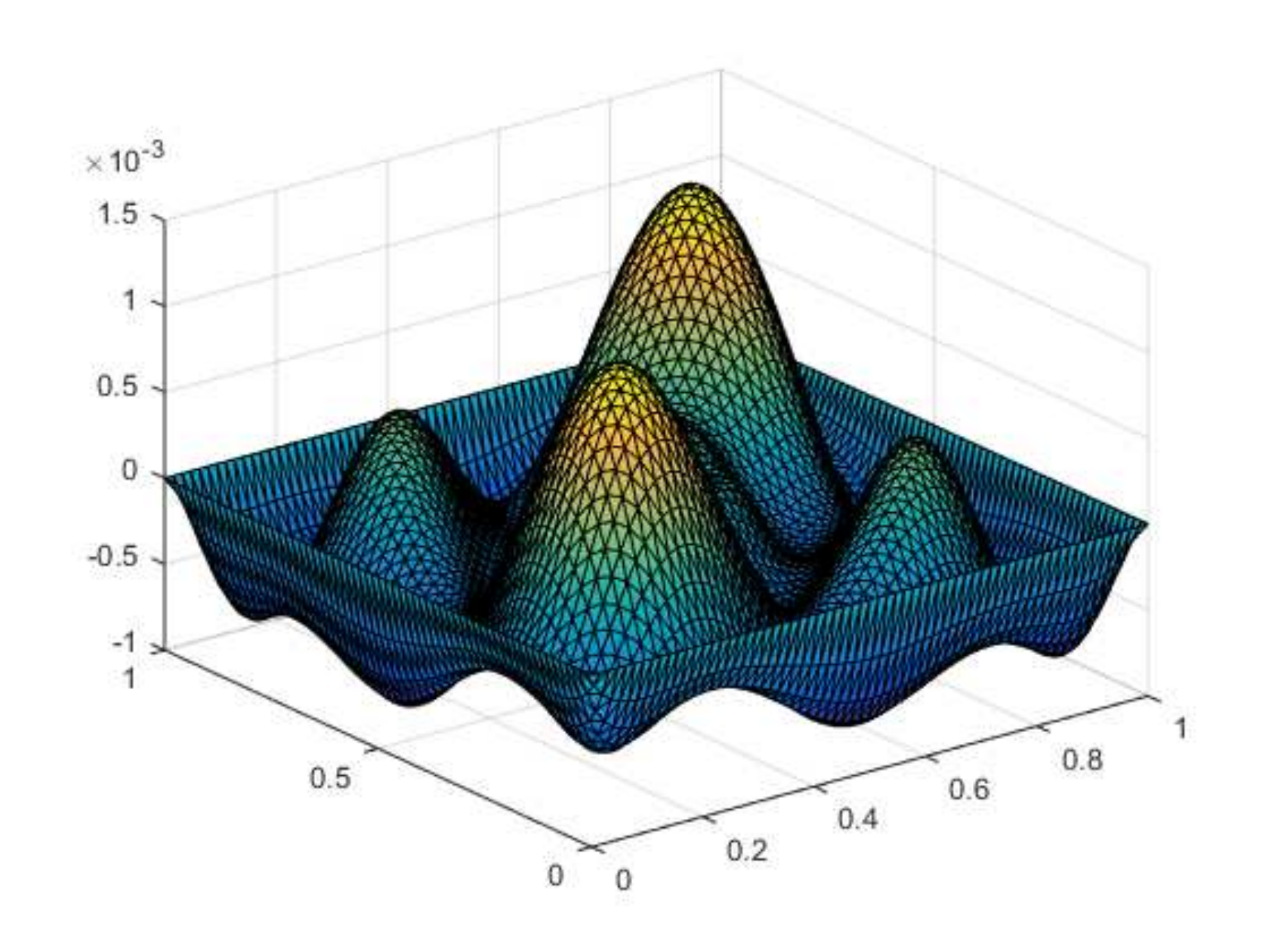}
}\\
 \caption{Exact and numerical solutions {at $T=1$} for case (d), where the POD solutions
are obtained using $H_0^1(\Omega)$ POD basis with the FDQs.}\label{fig:POD-2D_sq_sol}
\end{figure}

\section{Concluding remarks}
In this work, we have developed an efficient Galerkin-L1-POD scheme for solving the subdiffusion
problem, by coupling the Galerkin finite element method, L1 time stepping and proper
orthogonal decomposition. It realizes the computational efficiency by constructing an effective
reduced-order model using POD, often with a very small degree of freedom.
We provided a complete error analysis of the scheme, and derived optimal error estimates due
to spatial discretization, temporal discretization and POD approximation. This is achieved
by developing a novel energy argument for L1 time stepping. The extensive numerical experiments
fully confirmed the convergence analysis and the efficiency and robustness of the scheme.

The work represents only a first step towards effective model reduction strategies for fractional differential equations.
The choice of the three components in the proposed scheme is not unique. Alternatively, one may employ
finite difference methods or spectral methods instead of the finite element method, and
convolution quadrature type schemes instead of the L1 time scheme.
The overall framework extends straightforwardly to these alternative choices, even though the
convergence analysis will differ. Further, it is of much interest to extend the proposed scheme
to more complex models, e.g., the multi-term model and the diffusion-wave model.

\section*{Acknowledgements}
The work of the first author (B. Jin) is partly supported by EPSRC grant EP/M025160/1.

\appendix
\section{Regularity theory for problem \eqref{eqn:fde}}
Now we describe temporal regularity results of problem \eqref{eqn:fde} which plays an important role
in the convergence analysis. Let $\{(\lambda_j,\fy_j)\}_{j=1}^\infty$ be the eigenvalue pairs of the
negative Laplacian $A=-\Delta$ with a homogeneous Dirichlet boundary condition, where the set $\{\fy_j
\}_{j=1}^\infty$ forms an orthonormal basis in $L^2\II$. Then by the standard separation of variable
technique, we deduce that the solution $u$ can be represented by
\begin{equation*}
  u(t)=E(t)v + \int_0^t  {\bar E}(t-s) f(s) ds,
\end{equation*}
where the solution operators $E(t)$ and $\bar E(t) $ are given by
\begin{equation}\label{op:E}
  E(t)\psi=\sum_{j=1}^\infty E_{\alpha,1}(-\lambda_jt^\alpha)(\psi,\fy_j)\fy_j\quad
  \text{and}\quad {\bar E(t)}\psi =\sum_{j=1}^\infty t^{\alpha-1}E_{\alpha,\alpha}(-\lambda_jt^\alpha)(\psi,\fy_j)\fy_j,
\end{equation}
respectively. 
Here the Mittag-Leffler
function $E_{\alpha,\beta}(z)$, $\alpha>0$, $\beta\in\mathbb{R}$, is defined
by \cite[pp. 42]{KilbasSrivastavaTrujillo:2006}
$
  E_{\alpha,\beta}(z) = \sum_{k=0}^\infty \frac{z^k}{\Gamma(k\alpha +\beta)}.
$
The following relations hold
(see \cite[Lemma 3.2]{SakamotoYamamoto:2011} and \cite[pp. 43, eq. (1.8.28)]{KilbasSrivastavaTrujillo:2006} for proofs).
\begin{lemma}\label{lem:mlfbdd}
Let $\alpha\in(0,1)$, and $\beta\in\mathbb{R}$. The Mittag-Leffler function $E_{\alpha,\beta}(z)$ satisfies for $m\geq1$
\begin{equation*}
  \frac{d^m}{dt^m} E_{\alpha,1}(-\lambda t^\alpha) = -\lambda t^{\alpha-m} E_{\alpha,\alpha+1-m}(-\lambda t^\alpha)\quad t>0,
\end{equation*}
and the following uniform bound on the negative real axis $\mathbb{R}^-$ holds
\begin{equation*}
  E_{\alpha,\beta}(z) \leq   c(1+|z|)^{-1}\quad \forall z\in \mathbb{R}^{-}.
\end{equation*}
\end{lemma}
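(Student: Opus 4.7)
The plan is to prove the two assertions independently: the derivative identity by direct series manipulation, and the uniform bound by the classical Hankel-contour asymptotic expansion.

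For the derivative identity, I would differentiate the defining power series term by term. Starting from
\[
E_{\alpha,1}(-\lambda t^\alpha) = \sum_{k=0}^\infty \frac{(-\lambda)^k t^{k\alpha}}{\Gamma(k\alpha+1)},
\]
and using $(d/dt)^m t^{k\alpha} = [\Gamma(k\alpha+1)/\Gamma(k\alpha+1-m)]\, t^{k\alpha-m}$, I note that the $k=0$ term drops out because $1/\Gamma(1-m)=0$ for integer $m\geq 1$. Reindexing via $k=j+1$ and factoring out $-\lambda t^{\alpha-m}$ then produces exactly
\[
-\lambda t^{\alpha-m} \sum_{j=0}^\infty \frac{(-\lambda t^\alpha)^j}{\Gamma(j\alpha+\alpha+1-m)} = -\lambda t^{\alpha-m} E_{\alpha,\alpha+1-m}(-\lambda t^\alpha).
\]
The interchange of derivative and sum is justified by uniform convergence of the differentiated series on compact subsets of $(0,\infty)$, which follows from the standard decay $1/\Gamma(k\alpha+1)\to 0$ at superexponential rate.

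For the uniform bound on the negative real axis, I would split into the regimes $|z|\leq 1$ and $|z|>1$. In the first regime the estimate is immediate from continuity of the entire function $E_{\alpha,\beta}$. For $|z|>1$ with $z\in\mathbb{R}^-$ I would invoke the classical asymptotic expansion, valid in the sector $|\arg(-z)|<(1-\alpha/2)\pi$ (which contains $\mathbb{R}^-$ since $\alpha\in(0,1)$),
\[
E_{\alpha,\beta}(z) = -\sum_{k=1}^{N}\frac{z^{-k}}{\Gamma(\beta-\alpha k)} + O(|z|^{-N-1}).
\]
Taking $N=1$ yields $|E_{\alpha,\beta}(z)|\leq c|z|^{-1}$ for large $|z|$, and combining both regimes gives the global estimate $E_{\alpha,\beta}(z)\leq c(1+|z|)^{-1}$. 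The asymptotic expansion itself is derived from the Hankel-contour representation
\[
E_{\alpha,\beta}(z) = \frac{1}{2\pi i\alpha}\int_{\mathcal H}\frac{e^{s^{1/\alpha}}\, s^{(1-\beta)/\alpha}}{s-z}\,ds,
\]
by deforming $\mathcal H$ outward, expanding $(s-z)^{-1}$ as a geometric series in $s/z$, and estimating the tail.

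The derivative identity is routine bookkeeping. The genuine technical content lies in the asymptotic expansion underlying the second assertion, where the contour deformation and the remainder estimate require careful complex-analytic arguments; this would be the main obstacle if one insisted on a self-contained proof. Since both statements are classical and well documented (see \cite{KilbasSrivastavaTrujillo:2006,SakamotoYamamoto:2011}), in the appendix I would simply invoke them without repeating the proofs.
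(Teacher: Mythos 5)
Your proposal is correct, and it matches the paper's treatment: the paper gives no proof of this lemma at all, simply citing \cite[Lemma 3.2]{SakamotoYamamoto:2011} and \cite[pp.~43, eq.~(1.8.28)]{KilbasSrivastavaTrujillo:2006}, which is exactly what you conclude by doing in your final paragraph. Your accompanying sketch of the underlying classical arguments (term-by-term differentiation of the series for the identity, and the Hankel-contour asymptotic expansion split over $|z|\le 1$ and $|z|>1$ for the uniform bound) is also accurate.
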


Now we can state the temporal regularity for the homogeneous problem.
\begin{theorem}\label{thm:reg-init}
If $v\in D(A^\sigma)$ and $f\equiv 0$, then
  \begin{equation}\label{eqn:new1}
   \| \partial_t^m u \|_{L^2(\Om)} \le c t^{\sigma\al-m}  \| A^\sigma v \|_{L^2\II},
  \end{equation}
  where if $\sigma\in(0,1]$, $m\geq1$, and if $\sigma=0$, $0\leq m\leq 2$.
\end{theorem}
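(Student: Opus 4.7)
The plan is to exploit the spectral expansion $u(t)=E(t)v=\sum_{j=1}^\infty E_{\alpha,1}(-\lambda_j t^\alpha)(v,\varphi_j)\varphi_j$ from \eqref{op:E}, differentiate term by term via Lemma \ref{lem:mlfbdd}, and then combine Parseval's identity with the uniform Mittag-Leffler bound $|E_{\alpha,\beta}(-z)|\le c(1+z)^{-1}$ on $\mathbb{R}^-$ to reduce the assertion to an elementary algebraic inequality that interpolates between the regularity index $\sigma$ and the order of differentiation $m$.

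For $m\ge 1$, my first step is to apply Lemma \ref{lem:mlfbdd} termwise to write
\begin{equation*}
\partial_t^m u(t) = -t^{\alpha-m}\sum_{j=1}^\infty \lambda_j E_{\alpha,\alpha+1-m}(-\lambda_j t^\alpha)(v,\varphi_j)\varphi_j.
\end{equation*}
Orthonormality of $\{\varphi_j\}$ in $L^2(\Omega)$ together with the Mittag-Leffler bound then yields
\begin{equation*}
\|\partial_t^m u(t)\|_{L^2(\Omega)}^2 \le c\, t^{2\alpha-2m}\sum_{j=1}^\infty \frac{\lambda_j^2}{(1+\lambda_j t^\alpha)^2}(v,\varphi_j)^2.
\end{equation*}
Next I would use the elementary interpolation inequality $x/(1+x)\le x^\sigma$ valid for all $x\ge 0$ and $\sigma\in[0,1]$, which follows from $x/(1+x)\le \min(1,x)\le x^\sigma$. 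Applied with $x=\lambda_j t^\alpha$, it gives $\lambda_j^2 t^{2\alpha}/(1+\lambda_j t^\alpha)^2 \le (\lambda_j t^\alpha)^{2\sigma}$, and substituting back produces
\begin{equation*}
\|\partial_t^m u(t)\|_{L^2(\Omega)}^2 \le c\, t^{2\sigma\alpha-2m}\sum_{j=1}^\infty \lambda_j^{2\sigma}(v,\varphi_j)^2 = c\, t^{2\sigma\alpha-2m}\|A^\sigma v\|_{L^2(\Omega)}^2,
\end{equation*}
which is \eqref{eqn:new1}. The remaining case $\sigma=0$, $m=0$, reduces to $\|u(t)\|_{L^2(\Omega)}\le c\|v\|_{L^2(\Omega)}$, which follows from Parseval and the uniform bound $|E_{\alpha,1}(-\lambda_j t^\alpha)|\le c$.

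No single step is difficult; the only mildly subtle point is the interpolation inequality $x/(1+x)\le x^\sigma$, which is what allows the single Mittag-Leffler decay factor $(1+\lambda_j t^\alpha)^{-1}$ to be split between a power of $\lambda_j$ absorbed into $\|A^\sigma v\|_{L^2(\Omega)}$ and a power of $t^\alpha$ responsible for the singular temporal prefactor $t^{\sigma\alpha-m}$. Termwise differentiation of the spectral series is justified a posteriori by the uniform tail bound produced in the above computation, using $v\in D(A^\sigma)$ to ensure summability of $\lambda_j^{2\sigma}(v,\varphi_j)^2$.
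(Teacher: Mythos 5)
Your proof is correct and follows essentially the same route as the paper: termwise differentiation of the spectral expansion via Lemma \ref{lem:mlfbdd}, Parseval, and the uniform Mittag-Leffler bound, with your interpolation inequality $x/(1+x)\le x^\sigma$ being exactly the paper's bound $\sup_j (\lambda_j t^\alpha)^{2-2\sigma}/(1+\lambda_j t^\alpha)^2\le c$ in disguise. The only cosmetic difference is that you handle the $\sigma=0$ case directly rather than citing Sakamoto--Yamamoto as the paper does.
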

\begin{proof}
The case $\sigma=0$ has been shown \cite[Corollary 2.6]{SakamotoYamamoto:2011}.
For $\sigma\in(0,1]$, by Lemma \ref{lem:mlfbdd}, we have
\begin{equation*}
  \begin{aligned}
   \|\partial_t^m u\|_{L^2(\Omega)}^2 & = \|\sum_{j=1}^\infty \frac{d^m}{dt^m}E_{\alpha,1}(-\lambda_jt^\alpha)(v,\fy_j)\fy_j\|_{L^2(\Omega)}^2
      =\sum_{j=1}^\infty \lambda_j^2 t^{2\alpha-2m}E_{\alpha,\alpha-m+1}(-\lambda_jt^\alpha)^2(v,\fy_j)^2\\
     & =\sum_{j=1}^\infty (\lambda_j t^\alpha)^{2-2\sigma} t^{2\sigma\alpha-2m}E_{\alpha,\alpha-m+1}(-\lambda_jt^\alpha)^2(v,\fy_j)^2\lambda_j^{2\sigma}\\
     & \leq ct^{2\sigma\alpha-2m}\sup_j \frac{(\lambda_jt^{\alpha})^{2-2\sigma}}{(1+\lambda_jt^\alpha)^2}\sum_{j=1}^\infty (v,\fy_j)^2\lambda_j^{2\sigma}\leq ct^{2\sigma\alpha-2m}\|A^\sigma v\|_{L^2\II}^2,
  \end{aligned}
\end{equation*}
where the last inequality follows from the inequality $\sup_j {(\lambda_jt^{\alpha})^{2-2\sigma}}/{(1+\lambda_jt^\alpha)^2}\leq c$.
\end{proof}

Next we consider the inhomogeneous problem. We shall need the following estimate on $\bar E(t)$
\begin{lemma}\label{lem:Ebar}
For any $t>0$, we have for $ \chi\in L^2\II$ and  $m\ge 0$
\begin{equation*}
     \|\partial_t^m\bar E(t) \chi \|_{L^2 \II} \le ct^{\al-m-1}\|\chi\|_{L^2\II}.
\end{equation*}
\end{lemma}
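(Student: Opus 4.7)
The plan is to work mode-by-mode in the spectral expansion of $\bar E(t)$ and reduce the estimate to the uniform bound on Mittag-Leffler functions from Lemma \ref{lem:mlfbdd}. Writing
\begin{equation*}
  \bar E(t)\chi = \sum_{j=1}^\infty t^{\alpha-1} E_{\alpha,\alpha}(-\lambda_j t^\alpha)(\chi,\varphi_j)\varphi_j,
\end{equation*}
the crucial step is to differentiate the scalar factor $t^{\alpha-1}E_{\alpha,\alpha}(-\lambda t^\alpha)$ and express the result in closed form. Using the power series definition $E_{\alpha,\alpha}(z)=\sum_{k\geq 0}z^k/\Gamma(k\alpha+\alpha)$, I would write
\begin{equation*}
  t^{\alpha-1} E_{\alpha,\alpha}(-\lambda t^\alpha) = \sum_{k=0}^\infty \frac{(-\lambda)^k t^{\alpha(k+1)-1}}{\Gamma(\alpha(k+1))},
\end{equation*}
differentiate term-by-term $m$ times (justified by uniform convergence on compact subsets of $t>0$), and collapse the Gamma-function ratio to obtain the clean identity
\begin{equation*}
  \frac{d^m}{dt^m}\bigl[t^{\alpha-1}E_{\alpha,\alpha}(-\lambda t^\alpha)\bigr] = t^{\alpha-m-1} E_{\alpha,\alpha-m}(-\lambda t^\alpha).
\end{equation*}
This is the analogue for $\bar E$ of the identity for $E$ recorded in Lemma \ref{lem:mlfbdd}, and it is the main technical ingredient.

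With this identity in hand, the rest is essentially Parseval. The uniform bound in Lemma \ref{lem:mlfbdd} gives
\begin{equation*}
  |E_{\alpha,\alpha-m}(-\lambda_j t^\alpha)| \le c(1+\lambda_j t^\alpha)^{-1} \le c,
\end{equation*}
uniformly in $j$ and $t>0$. Differentiating the spectral sum for $\bar E(t)\chi$ termwise (which can be justified because $\chi\in L^2(\Omega)$ and the factor $\lambda_j^0$ does not appear, so no extra decay of Fourier coefficients is needed beyond $\{(\chi,\varphi_j)\}\in \ell^2$) and applying Parseval yields
\begin{equation*}
  \|\partial_t^m \bar E(t)\chi\|_{L^2(\Omega)}^2 = \sum_{j=1}^\infty t^{2(\alpha-m-1)} |E_{\alpha,\alpha-m}(-\lambda_j t^\alpha)|^2 (\chi,\varphi_j)^2 \le c^2 t^{2(\alpha-m-1)} \|\chi\|_{L^2(\Omega)}^2,
\end{equation*}
which is the desired bound after taking square roots.

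The only mildly delicate point is the justification of termwise differentiation of the spectral series and the convergence of the power series manipulation, but both are standard once the representation \eqref{op:E} and the Mittag-Leffler bound are available. No solution-regularity hypotheses are needed because $\chi$ is only assumed to lie in $L^2(\Omega)$: all the smoothing comes from the factor $E_{\alpha,\alpha-m}(-\lambda_j t^\alpha)$, and the singular prefactor $t^{\alpha-m-1}$ is precisely what appears in the stated estimate.
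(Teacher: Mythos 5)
Your proof is correct and follows essentially the same route as the paper: expand $\bar E(t)\chi$ spectrally, differentiate each mode using $\frac{d^m}{dt^m}\bigl[t^{\alpha-1}E_{\alpha,\alpha}(-\lambda t^\alpha)\bigr]=t^{\alpha-m-1}E_{\alpha,\alpha-m}(-\lambda t^\alpha)$, bound $E_{\alpha,\alpha-m}$ uniformly via Lemma \ref{lem:mlfbdd}, and conclude by Parseval. The only difference is that you derive the differentiation identity explicitly from the power series, which the paper leaves implicit.
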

\begin{proof}
The definition of the operator $\bar{E}$ in \eqref{op:E} and Lemma \ref{lem:mlfbdd} yield
\begin{equation*}
\begin{split}
\|\partial_t^m \bar E(t) \chi \|_{L^2\II}^2 & =\sum_{j=1}^{\infty}  |t^{\al-m-1}E_{\al,\al-m}(-\la_j t^\al)|^2 |(\chi,\fy_j)|^2\\
     & \le c t^{2\al-2m-2}\sum_{j=1}^{\infty}  |(\chi,\fy_j)|^2 = c t^{2\al-2m-2}  \|\chi\|_{L^2\II}^2,
\end{split}
\end{equation*}
which completes the proof of the lemma.
\end{proof}

Now we can state the temporal regularity result for the inhomogeneous problem.
\begin{theorem}\label{thm:reg}
If $v\equiv 0$ and $f\in W^{m,\infty}(0,T;L^2(\Omega))$ with some $m \in [0,2]$, then there holds
  \begin{equation}\label{eqn:new2}
   \| \partial_t^m u\|_{L^2(\Om)} \le c_T t^{\al-m}  \| f\|_{W^{m,\infty}(0,T;L^2\II)}, \quad 0\le m\le 2.
  \end{equation}
\end{theorem}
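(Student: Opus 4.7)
The plan is to exploit the mild solution representation
\begin{equation*}
u(t)=\int_0^t \bar E(t-s) f(s)\,ds
\end{equation*}
together with Lemma \ref{lem:Ebar}, handling the derivatives in $t$ by a change of variables that converts temporal differentiation of the operator kernel into differentiation of the data $f$, at the cost of explicit boundary terms involving $f(0),f'(0)$, etc. These boundary terms are precisely what will produce the singular prefactor $t^{\alpha-m}$.

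I would treat the three integer values of $m$ separately. For $m=0$, Lemma \ref{lem:Ebar} with $m=0$ gives
\begin{equation*}
\|u(t)\|_{L^2\II}\le \int_0^t \|\bar E(t-s) f(s)\|_{L^2\II}\,ds
\le c\|f\|_{L^\infty(0,T;L^2\II)} \int_0^t (t-s)^{\alpha-1}\,ds\le c\alpha^{-1} t^{\alpha}\|f\|_{L^\infty(0,T;L^2\II)}.
\end{equation*}
For $m=1$, I would substitute $\tau=t-s$ to obtain $u(t)=\int_0^t \bar E(\tau) f(t-\tau)\,d\tau$, then differentiate under the integral sign to produce
\begin{equation*}
\partial_t u(t)=\bar E(t) f(0) + \int_0^t \bar E(t-s) f'(s)\,ds,
\end{equation*}
and bound both terms using Lemma \ref{lem:Ebar}. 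For $m=2$, I would repeat this trick on the remaining integral, giving
\begin{equation*}
\partial_t^2 u(t)= \partial_t\bar E(t) f(0) + \bar E(t)f'(0) + \int_0^t \bar E(t-s) f''(s)\,ds,
\end{equation*}
and bound each piece by Lemma \ref{lem:Ebar}, which yields $ct^{\alpha-2}\|f(0)\|_{L^2\II}$, $ct^{\alpha-1}\|f'(0)\|_{L^2\II}$, and $c\alpha^{-1}t^{\alpha}\|f''\|_{L^\infty(0,T;L^2\II)}$, respectively. Absorbing factors of $t\le T$ into $c_T$ then collapses all three contributions into $c_T t^{\alpha-m}\|f\|_{W^{m,\infty}(0,T;L^2\II)}$. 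For non-integer $m\in(0,2)$, I would deduce the estimate by interpolation between the two neighboring integer endpoints.

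The main obstacle is the proper handling of the boundary terms. The singular factor $t^{\alpha-m}$ comes entirely from the boundary contributions $\partial_t^k\bar E(t)\partial_t^{m-1-k}f(0)$, so the crux is to verify that the integration by parts is legitimate despite the blow-up of $\bar E(\tau)$ and $\partial_\tau \bar E(\tau)$ as $\tau\to 0^+$. This is justified because, in view of Lemma \ref{lem:Ebar}, the kernel $\bar E(\tau)$ is weakly singular with $\alpha-1>-1$ and $\partial_\tau \bar E(\tau)$ is $O(\tau^{\alpha-2})$, so the boundary evaluations at $\tau=t$ are finite while the evaluations at $\tau=0$ can be interpreted as limits after moving the derivative off the kernel via the substitution $\tau=t-s$. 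Once this maneuver is in place, every term is controlled by a direct application of Lemma \ref{lem:Ebar}, completing the proof.
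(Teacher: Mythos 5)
Your proof is correct, but it takes a genuinely different route from the paper. The paper never produces boundary terms: it invokes McLean's weighted convolution identity $t(f*g)' = f*g+(tf')*g+f*(tg')$ and iterates it, so that $t^m\partial_t^m u$ is expressed as a sum of convolutions of $(t-s)^p\partial_t^p\bar E(t-s)$ against $s^q$ times derivatives of $f$ with $p+q\le m$; each such term is then bounded by $c\int_0^t(t-s)^{\al-1}s^q\,ds$ via Lemma \ref{lem:Ebar}, and dividing by $t^m$ gives the singular prefactor. You instead differentiate the Duhamel integral directly after the substitution $\tau=t-s$, which moves derivatives onto $f$ at the cost of the boundary terms $\partial_t^p\bar E(t)f^{(q)}(0)$ with $p+q=m-1$; these carry the singularity $t^{\al-m}$, and the remaining convolution $\int_0^t\bar E(t-s)f^{(m)}(s)\,ds$ is $O(t^{\al})$. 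Both arguments close with the same Lemma \ref{lem:Ebar}, and both are valid under the stated hypothesis: your version additionally needs the traces $f(0)$ and $f'(0)$ to make sense, which is guaranteed by the embedding $W^{m,\infty}(0,T;L^2\II)\hookrightarrow C^{m-1}([0,T];L^2\II)$, whereas the paper's version only ever uses $\|f^{(q)}\|_{L^\infty(0,T;L^2\II)}$ and requires no pointwise evaluation of the data. Your approach is more elementary and makes the source of the singularity explicit (it is entirely due to the boundary terms), while the paper's weighted identity is slicker and generalizes more cleanly to arbitrary integer $m$ without bookkeeping of initial values. One minor point: what you are doing is differentiation under the integral sign (Leibniz), not integration by parts, and the only boundary evaluation that occurs is at $\tau=t$ (i.e., $s=0$), where $\bar E(t)$ and $\partial_t\bar E(t)$ are finite for $t>0$; the singular endpoint $\tau=0$ of the kernel is never evaluated, so the justification is routine by dominated convergence. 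Your treatment of non-integer $m$ by interpolation goes beyond what the paper does (its proof, like yours, is really for $m=0,1,2$), and is fine as a remark.
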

\begin{proof}
Using the following convolution relation \cite[Lemma 5.2]{McLean:2010}
\begin{equation*}
  t(f*g)' = f*g+(tf')*g+f*(tg')
\end{equation*}
and Lemma \ref{lem:Ebar}, we deduce that for $t\in(0,T]$
\begin{equation*}
\begin{split}
  t^m \| \partial_t^m u\|_{L^2(\Om)}
 & \le c\sum_{p+q\le m}\int_0^t  \|(t-s)^p\partial_t^p \bar E(t-s)  (s^{q}f^{(m)}(s))\|_{L^2\II} \,ds\\
  & \le  c\sum_{p+q\le m}\int_0^t  (t-s)^{\al-1} s^{q}  \|f^{(m)}(s))\|_{L^2\II} \,ds
  \le  c \|f\|_{W^{m,\infty}(0,T;L^2\II)} \sum_{p+q\le m}  t^{\al+q} .
\end{split}
\end{equation*}
Since for $t\in (0,T]$, we have $\sum_{p+q\le m}  t^{\al+q-m} \le c_T t^{\al-m}$,
the desired assertion follows.
\end{proof}

\begin{remark}\label{rmk:reg}
Theorems \ref{thm:reg-init} and \ref{thm:reg} show the limited
smoothing property of the subdiffusion model \eqref{eqn:fde}:
for the homogeneous problem with $v\in D(A)$, the first
order derivative in time $t$ of the solution $u$ exhibits a singularity of the form $t^{\alpha-1}$; and for
the inhomogeneous problem with $f\in W^{2,\infty}(0,T;L^2(\Omega))$, the first-order derivative
exhibits a similar singularity, despite the smoothness of $f$ in time.
\end{remark}

\bibliographystyle{abbrv}
\bibliography{frac-l1}

\end{document}